\DeclareMathOperator{\Hom}{Hom}
\DeclareMathOperator{\Mod}{Mod}
\DeclareMathOperator{\Tr}{Tr}
\DeclareMathOperator{\ssmod}{SSMod}
\DeclareMathOperator{\Soc}{Soc}
\DeclareMathOperator{\lep}{lep}
\DeclareMathOperator{\fil}{fil}
\DeclareMathOperator{\idom}{\mathfrak{In}}
\DeclareMathOperator{\pdom}{\mathfrak{Pr}}
\DeclareMathOperator{\ext}{Ext}
\DeclareMathOperator{\wis}{Wis}
\newtheorem{theorem}{Theorem}[section]
\newtheorem{proposition}[theorem]{Proposition}
\newtheorem{definition}[theorem]{Definition}
\newtheorem{corollary}[theorem]{Corollary}
\newtheorem{lemma}[theorem]{Lemma}
\newtheorem{example}[theorem]{Example}
\newenvironment{proof}[1][\it{Proof}]{\textbf{#1. } }{$\square$}
\begin{document}

\title{Characterizing rings in terms of the extent of the injectivity and projectivity of their modules.}
\author{Sergio R. L\'opez-Permouth and Jos\'e E. Simental}
\date{\today}

\maketitle

\begin{abstract}
Given a ring $R$, we define its right $i$-profile (resp. right $p$-profile) to be the collection of injectivity domains (resp. projectivity domains) of its right $R$-modules. We study the lattice theoretic properties of these profiles and consider ways in which properties of the profiles may determine the structure of rings and viceversa. We show that the $i$-profile is isomorphic to an interval of the lattice of linear filters of right ideals of $R$, and is therefore modular and coatomic. In particular, we give a practical characterization of the profile of a right artinian ring. We show through an example that the $p$-profile of a ring is not necessarily a set, and also characterize the $p$-profile of a right perfect ring. The study of rings in terms of their ($i$ or $p$-)profile was inspired by the study of rings with no right ($i$ or $p$-)middle class, initiated in recent papers by Er, L\'opez-Permouth and S\"okmez, and by Holston, L\'opez-Permouth and Orhan-Ertas. In this paper, we obtain further results about these rings and also we use our results to privede a characterization of a special class of QF rings in which the injectiviry and projectivity domains of all modules coincide.
\end{abstract}

\section{Introduction and preliminaries}

Throughout, $R$ will denote an associative ring with identity, and modules will be unital right modules, unless otherwise stated. As usual, we denote by $\Mod\text{-}R$ the category of right $R$-modules. Recall that a module $M$ is said to be $N$-injective (or injective relative to $N$) if for every submodule $K \leq N$ and every morphism $\varphi: K \rightarrow M$ there exists a morphism $\overline{\varphi}: N \rightarrow M$ such that $\overline{\varphi}|_{K} = \varphi$. For a module $M$, its injectivity domain is defined to be the collection of modules $N$ such that $M$ is $N$-injective, that is $\mathfrak{In}^{-1}(M) = \{N \in \Mod\text{-}R : M \; \text{is} \; N\text{-injective}\}$. For convenience, we will say that a class of modules ${\mathcal P}$ is an (injective) portfolio if there exists a module $M$ such that $\mathcal{P} = \mathfrak{In}^{-1}(M)$. It is well-known (e.g. \cite[Proposition 16.13]{anderson}) that injectivity domains are closed under submodules, homomorphic images and arbitrary direct sums. A module $M$ is called injective if $\mathfrak{In}^{-1}(M) = \Mod\text{-}R$. On the other hand, Alahmadi, Alkan and L\'opez-Permouth defined in \cite{alahmadi} the concept of a poor module, namely, a module $N$ with smallest possible injectivity domain, that is, $\mathfrak{In}^{-1}(N)$ consists precisely of the semisimple modules. In \cite[Proposition 1]{er}, Er, L\'opez-Permouth and S\"okmez proved that every ring has poor modules.



Similarly, a module $M$ is said to be $N$-projective if for every epimorphism $g: N \rightarrow K$ and every morphism $\psi:M \rightarrow K$, there exists a morphism $\overline{\psi}: M \rightarrow N$ with $\psi = g\overline{\psi}$. The projectivity domain of $M$ is then defined as $\pdom^{-1}(M) = \{N \in \Mod\text{-}R : M \; \text{is} \; N\text{-projective}\}$. Projectivity domains are closed under submodules, quotients and finite direct sums (\cite[Proposition 16.12]{anderson}). If $M$ has a projective cover, then $\pdom^{-1}(M)$ is also closed under arbitrary direct products (\cite[Exercise 17.16]{anderson}). Clearly, a module $M$ is projective if $\pdom^{-1}(M) = \Mod\text{-}R$. An opposite notion to projectivity was considered by Holston, L\'opez-Permouth and Orhan in \cite{holston}, as they studied $p$-poor modules. A module $M$ is $p$-poor if $\pdom^{-1}(M)$ contains only semisimple modules. In that paper, the authors proved that every ring has (semisimple) $p$-poor modules(\cite[Theorem 2.8]{holston}). 



A class of modules is said to be a {\it hereditary pretorsion class} if it is closed under homomorphic images, submodules and arbitrary direct sums. Hereditary pretorsion classes play a central role in torsion theory; they also appear in a different setting. Wisbauer's book \cite{wisbauer} documents the movement he lead to generalize the objectives, methods and results of module theory (seen as the study of the category Mod-$R$) by considering, for every module $M$, the full subcategory $\sigma[M]$ of Mod-$R$ having as objects all modules subgenerated by $M$ (i.e. all submodules of homomorphic images of direct sums of copies of $M$). Wisbauer's program has been very popular and has been pursued by many authors (c.f. \cite{dung}, \cite{violapriori1}, etc.) It is not uncommon to use $\sigma[M]$ also to denote only the objects in that category. We like saying, as a recognition to Wisbauer's contributions that any such class of modules is a Wisbauer class. Every Wisbauer class is a hereditary pretorsion class and, conversely, for every hereditary pretorsion class ${\mathcal{T}}$, if we refer to the direct sum of a complete set of non-isomorphic cyclic modules in ${\mathcal{T}}$, then ${\mathcal{T}}$ is precisely the Wisbauer class $\sigma[M]$. Consequently we refer to hereditary pretorsion classes as Wisbauer classes. Furthermore, we denote the class of all Wisbauer classes over a ring R as Wis-$R$.Note that $\wis\text{-}R$ has a natural lattice structure, with the partial order given by inclusion. Also note that every Wisbauer class is completely determined by the cyclic modules in it. Thus, $\wis\text{-}R$ is in bijective correspondence with a set.

 
 A subfunctor of the identity functor $\tau: \Mod\text{-}R \rightarrow \Mod\text{-}R$ is called a {\it left exact preradical} if, for $N \leq M$ we have $\tau(N) = \tau(M)\cap N$. The class of left exact preradicals has a natural lattice structure, given by $\tau \leq \eta$ if $\tau(N) \leq \eta(N)$ for all $N \in \Mod\text{-}R$, $\tau\wedge\eta(N) = \tau(N)\cap\eta(N)$ for all $N \in \Mod\text{-}R$, and $\tau\vee\eta = \bigwedge_{\rho\geq \eta, \tau} \rho$. We denote this lattice by $\lep\text{-}R$.
 
 A set of right ideals $\mathfrak{F}$ is called a {\it linear filter of right ideals} if it satisfies the following axioms: F1) $R \in \mathfrak{F}$; F2) $I, J \in \mathfrak{F}$ implies that $I \cap J \in \mathfrak{F}$; F3) If $I \in \mathfrak{F}$ and $I \leq J$, then $J \in \mathfrak{F}$; and F4) $(I:r) \in \mathfrak{F}$ for all $I \in \mathfrak{F}$ and $r \in R$. For a two-sided ideal $I$ of $R$, the set $\eta(I) = \{J : I \leq J\}$ is a linear filter, and a linear filter is of this form if and only if it is closed under taking arbitrary intersections (\cite[Proposition 1.14]{golan}).  The set of linear filters of right ideals has a natural lattice structure, given by $\mathfrak{F} \leq \mathfrak{G}$ if $\mathfrak{F} \subseteq \mathfrak{G}$, $\mathfrak{F}\wedge\mathfrak{G} = \mathfrak{F}\cap\mathfrak{G}$ and $\mathfrak{F}\vee\mathfrak{G} = \bigwedge_{\mathfrak{H} \geq \mathfrak{F}, \mathfrak{G}}\mathfrak{H}$. We denote this lattice by $\fil\text{-}R$.
 
It is a well-known torsion theoretic fact that the above-mentioned notions are equivalent (see, for example \cite[Chapter VI]{stenstrom}). In \cite{raggi1}, Raggi, R\'{i}os-Montes, Rinc\'on and Fern\'andez-Alonso extended the list of isomorphic lattices by showing that indeed they are all isomorphic to the lattice of fully invariant submodules of a specific type of injective module. They define the concept of a {\it main injective module} as an injective module $\overline{E}$ such that every left exact preradical $\tau$ is of the form $\omega^{\overline{E}}_{\tau(\overline{E})}$, where $\omega^{\overline{E}}_{\tau(\overline{E})}(N) = \bigcap\{f^{-1}(\tau(\overline{E})) : f \in \Hom(N, \overline{E})\}$. In \cite[Theorem 2.1]{raggi1}, the authors also prove that every ring indeed has a main injective module.

For later reference, we summarize the above mentioned results in the following proposition.

\begin{proposition}\label{torsiontheory}
For all rings $R$, the following lattices are equivalent.
\begin{enumerate}
\item The lattice $\wis\text{-}R$ of Wisbauer classes in $\Mod\text{-}R$.
\item The lattice $\fil\text{-}R$ of linear filters of right ideals of $R$.
\item The lattice $\lep\text{-}R$ of left exact preradicals in $\Mod\text{-}R$.
\item The lattice $\mathcal{S}_{fi}(\overline{E})$ of fully invariant submodules of any main injective right $R$-module $\overline{E}$.
\end{enumerate}
\end{proposition}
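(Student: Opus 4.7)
The strategy is to establish the pairwise bijections one at a time and to check that each preserves joins and meets, rather than to revisit the entire torsion-theoretic machinery. Since all four posets are complete lattices, it suffices to verify that the bijections are order-preserving in both directions and carry intersections to intersections; the joins are then automatically matched. Because the first three equivalences belong to the classical Gabriel-Stenstr\"om correspondence already cited as \cite[Chapter VI]{stenstrom}, the only real task is to present the maps in a compatible language.

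The backbone of the proof is the bijection between $\fil\text{-}R$ and $\lep\text{-}R$. To a filter $\mathfrak{F}$ one associates the subfunctor $\tau_{\mathfrak{F}}(M) = \{m \in M : \text{ann}_R(m) \in \mathfrak{F}\}$; axiom F1 makes this well-defined, F2 and F3 give closure under the module structure, and F4 is exactly what is needed for $\tau_{\mathfrak{F}}$ to respect morphisms. Left exactness follows since the annihilator of an element computed in a submodule agrees with the one computed in the ambient module. In the opposite direction, given $\tau \in \lep\text{-}R$ one sets $\mathfrak{F}_{\tau} = \{I \leq R : \tau(R/I) = R/I\}$ and the four filter axioms are read off from the functoriality and left exactness of $\tau$. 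Direct inspection shows $\mathfrak{F} \subseteq \mathfrak{G} \iff \tau_{\mathfrak{F}} \leq \tau_{\mathfrak{G}}$, and that intersections of filters correspond to intersections of preradicals.

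For the passage to $\wis\text{-}R$, one sends a left exact preradical $\tau$ to its torsion class $\mathcal{T}_{\tau} = \{M : \tau(M) = M\}$, which is closed under submodules, quotients, and direct sums, hence is a Wisbauer class. Conversely a Wisbauer class $\mathcal{T}$ yields the preradical $\tau_{\mathcal{T}}(M) = \sum\{N \leq M : N \in \mathcal{T}\}$, the largest subobject of $M$ inside $\mathcal{T}$. The identities $\mathcal{T}_{\tau_{\mathcal{T}}}=\mathcal{T}$ and $\tau_{\mathcal{T}_{\tau}}=\tau$ are the content of the argument; as the excerpt points out, a Wisbauer class is determined by the cyclic modules it contains, and cyclic modules correspond precisely to quotients $R/I$, which links this bijection directly to the filter $\mathfrak{F}_\tau$. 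This alignment, combined with the preceding paragraph, packages the three-way equivalence.

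The equivalence with $\mathcal{S}_{fi}(\overline{E})$ is taken directly from \cite[Theorem 2.1]{raggi1}: given a fully invariant submodule $N \leq \overline{E}$ one forms the preradical $\omega^{\overline{E}}_{N}$ as defined in the excerpt, and given $\tau \in \lep\text{-}R$ one returns the submodule $\tau(\overline{E})$, which is fully invariant precisely because $\tau$ commutes with endomorphisms. The mutual inverseness is the defining property of a main injective module. The main obstacle in the whole argument is the first paragraph above, verifying that the filter axioms and the left exactness of preradicals are two faces of the same condition; once this is settled, the remaining bijections are either straightforward or are imported directly from \cite{raggi1}, and lattice compatibility follows because all four correspondences commute with arbitrary intersections.
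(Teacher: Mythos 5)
Your proposal is correct and takes essentially the same approach as the paper, which gives no proof of this proposition at all: it simply cites Stenstr\"om (Chapter VI) for the equivalence of (1)--(3) and Raggi et al.\ for (4), and the explicit bijections you write out ($\tau_{\mathfrak{F}}$, $\mathfrak{F}_{\tau}$, the torsion class, the trace, and $\omega^{\overline{E}}_{N}$) are precisely the standard ones underlying those citations. One minor slip in attribution, which does not affect the argument: F4 is what makes $\tau_{\mathfrak{F}}(M)$ closed under the right $R$-action (since $\mathrm{ann}_R(mr)=(\mathrm{ann}_R(m):r)$), whereas compatibility with morphisms follows from F3 via $\mathrm{ann}_R(m)\subseteq\mathrm{ann}_R(f(m))$.
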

 
 
 We denote by $E(M), J(M), \Soc(M)$ and $Z(M)$ the injective hull, Jacobson radical, socle and singular submodule of a module $M$, respectively. If $N$ and $M$ are modules, we denote the trace of $N$ in $M$ by  $\Tr_N(M) := \sum\{f(N) : f \in \Hom_R(N,M)\}$. If ${\cal A}$ is a class of modules, we denote the trace of ${\cal A}$ in $M$ by $\Tr_{\cal A}(M) = \sum_{N \in {\cal A}}\Tr_N(M)$. We denote by $\ssmod$-$R$ the full subcategory of $\Mod\text{-}R$ generated by the semisimple modules. Recall that a module $M$ is said to be quasi-injective if it is injective relative to itself, that $R$ is said to be a QI-ring if every quasi-injective module is injective, and that $R$ is said to be a QF-ring if every projective module is injective or, equivalently, if $R$ is a right noetherian right self-injective ring.
 

\section{The injective profile of a ring}

\begin{definition}
Let $R$ be a ring. We call a class $\mathcal{A}$ of modules an $i$-portfolio if there exists $M \in \Mod\text{-}R$ such that $\mathcal{A} = \idom^{-1}(M)$. The class $\{\mathcal{A} \subseteq \Mod\text{-}R : \mathcal{A} \; \text{is an} \; i\text{-portfolio}\}$ is called the right injective profile (right $i$-profile, for short) of $R$ and we denote it by $i\mathcal{P}_{r}(R)$. Similarly, we define the left $i$-profile of $R$ and denote it by $i\mathcal{P}_{\ell}(R)$. When there's no confusion, we denote the right $i$-profile of $R$ just by $i\mathcal{P}(R)$.
\end{definition}

Note that every $i$-portfolio is a Wisbauer class, as it is closed under submodules, quotient modules and arbitrary direct sums. Then, $i\mathcal{P}(R)$ is in bijective correspondence with a set, so we will think of it as a set. Our first goal is to give an intrinsic  description of an $i$-portfolio. We start with the following lemma, that tells us that $i\mathcal{P}(R)$ is closed under arbitrary intersections.

\begin{lemma}\label{intersection}
Let $R$ be a ring. Let $\mathbb{X} \subseteq i\mathcal{P}(R)$. Then, $\bigcap\mathbb{X}$ is an $i$-portfolio.
\end{lemma}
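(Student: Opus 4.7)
The plan is to exhibit an explicit module whose injectivity domain equals $\bigcap \mathbb{X}$. Since the remark preceding the lemma ensures that $i\mathcal{P}(R)$ is in bijective correspondence with a set (every $i$-portfolio is a Wisbauer class and hence determined by the cyclic modules it contains), the subcollection $\mathbb{X}$ can be indexed by a set, say $\mathbb{X} = \{\mathcal{A}_\alpha\}_{\alpha \in I}$. For each $\alpha$ choose a witness $M_\alpha$ with $\mathcal{A}_\alpha = \idom^{-1}(M_\alpha)$, and form
\[
M \;:=\; \prod_{\alpha \in I} M_\alpha .
\]
The proof then reduces to the equality $\idom^{-1}(M) = \bigcap_{\alpha \in I} \idom^{-1}(M_\alpha)$.

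This equality is the classical fact that a direct product of modules is $N$-injective if and only if each factor is. For the nontrivial inclusion, suppose $N$ lies in $\bigcap_\alpha \idom^{-1}(M_\alpha)$ and let $K \leq N$ together with a morphism $\varphi \colon K \to \prod M_\alpha$ be given. Composing with the projection $\pi_\alpha \colon \prod M_\beta \to M_\alpha$ yields $\pi_\alpha \varphi \colon K \to M_\alpha$, which extends to some $\widetilde{\varphi}_\alpha \colon N \to M_\alpha$ by the $N$-injectivity of $M_\alpha$. The universal property of the product then assembles the family $\{\widetilde{\varphi}_\alpha\}_\alpha$ into a single extension $\widetilde{\varphi} \colon N \to \prod M_\alpha$ of $\varphi$, showing $N \in \idom^{-1}(M)$. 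The reverse inclusion is immediate: if $M$ is $N$-injective, then so is each $M_\alpha$, since a direct summand of an $N$-injective module is $N$-injective (use the canonical retraction $\pi_\alpha$ to produce lifts).

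The only genuine subtlety is the set-theoretic one: the product $\prod_{\alpha \in I} M_\alpha$ is a bona fide module precisely because $I$ is a set, which is exactly what the bijective correspondence between $i\mathcal{P}(R)$ and a set provides. No appeal to the deeper lattice isomorphisms of Proposition~\ref{torsiontheory} is needed at this stage; those will presumably be leveraged for the structural description of $i\mathcal{P}(R)$ promised by the introduction.
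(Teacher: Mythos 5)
Your proof is correct and follows exactly the paper's argument: index $\mathbb{X}$ by a set, pick witnesses $M_\alpha$, and verify that $\idom^{-1}\bigl(\prod_\alpha M_\alpha\bigr) = \bigcap_\alpha \idom^{-1}(M_\alpha)$. The paper simply declares this equality ``easy to see,'' whereas you spell out the standard product/projection argument; the substance is identical.
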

\begin{proof}
Note that we can think of $\mathbb{X}$ as a set. For every $\mathcal{A} \in \mathbb{X}$, let $M_{\mathcal{A}}$ be a module such that $\mathcal{A} = \idom^{-1}(M_{\mathcal{A}})$. It is then easy to see that $\bigcap\mathbb{X} = \idom^{-1}(\prod_{\mathcal{A} \in \mathbb{X}}M_{\mathcal{A}})$.
\end{proof} \\

Note that, as a consequence of Lemma \ref{intersection}, $i\mathcal{P}(R)$ is a complete lattice and is, in fact, a sublattice of $\wis\text{-}R$. Moreover, since every module is injective with respect to any semisimple module, $i\mathcal{P}(R)$ is a sublattice of the interval $[\ssmod\text{-}R, \Mod\text{-}R] \subseteq \wis\text{-}R$.

\begin{proposition}\label{product2}
Let $R$ and $S$ be rings. Then we have a lattice isomorphism $i\mathcal{P}(R \times S) \cong i\mathcal{P}(R) \times i\mathcal{P}(S)$.
\end{proposition}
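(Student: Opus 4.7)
The plan is to exploit the block decomposition of $\Mod\text{-}(R \times S)$ induced by the central idempotents $e_R = (1,0)$ and $e_S = (0,1)$. Every $(R \times S)$-module $M$ decomposes canonically as $M = Me_R \oplus Me_S$, and since there are no nonzero $(R\times S)$-homomorphisms between modules with disjoint idempotent support, the category $\Mod\text{-}(R \times S)$ is equivalent to the product category $\Mod\text{-}R \times \Mod\text{-}S$, with $Me_R$ regarded as an $R$-module and $Me_S$ as an $S$-module.

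The key technical step will be to check that relative injectivity respects this decomposition: for $(R\times S)$-modules $M = M_R \oplus M_S$ and $N = N_R \oplus N_S$, I would show that $M$ is $N$-injective if and only if $M_R$ is $N_R$-injective over $R$ and $M_S$ is $N_S$-injective over $S$. The argument is routine once one decomposes a submodule $K \leq N$ as $K = Ke_R \oplus Ke_S$ and a morphism $\varphi: K \to M$ as $\varphi = \varphi_R \oplus \varphi_S$, since an extension of $\varphi$ to $N$ is then the same data as extensions of each component.

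With this in hand, I would define the candidate isomorphism $\Phi: i\mathcal{P}(R\times S) \to i\mathcal{P}(R) \times i\mathcal{P}(S)$ by $\Phi(\idom^{-1}(M)) = (\idom^{-1}(Me_R), \idom^{-1}(Me_S))$. Well-definedness requires verifying that the right-hand side depends only on the class $\idom^{-1}(M)$ and not on the choice of representative $M$; for this, one observes that for any $R$-module $L$ one has $L \in \idom^{-1}(Me_R)$ if and only if $L \oplus 0 \in \idom^{-1}(M)$ (using that $Me_S$ is trivially $0$-injective), so the $R$-component of the image is completely determined by the portfolio $\idom^{-1}(M)$, and symmetrically on the $S$-side. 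The inverse map sends a pair $(\idom^{-1}(N), \idom^{-1}(P))$ to $\idom^{-1}(N \oplus P)$, and the decomposition step guarantees that both compositions are the identity.

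Finally, to upgrade to a lattice isomorphism I would show that $\Phi$ is order preserving in both directions: the inclusion $\idom^{-1}(M) \subseteq \idom^{-1}(M')$ forces componentwise inclusion via the $L \oplus 0$ trick, while componentwise inclusion lifts back by the decomposition lemma; this automatically makes $\Phi$ commute with the induced meets and joins. The main obstacle is really just the technical decomposition of relative injectivity over a product ring; once that is in place, everything else reduces to bookkeeping, and no deeper difficulty is expected.
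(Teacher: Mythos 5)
Your proposal is correct and follows essentially the same route as the paper: both rest on the block decomposition $M = Me_R \oplus Me_S$ and the resulting identity $\idom^{-1}(M) = \idom^{-1}(Me_R) \times \idom^{-1}(Me_S)$, which the paper states tersely and you verify in detail. The extra care you take with well-definedness and order preservation is sound but amounts to the same argument spelled out.
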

\begin{proof}
If $M \in \Mod\text{-}(R\times S)$, then $M = M_R \times M_S$, with $M_R \in \Mod\text{-}R$ and $M_S \in \Mod\text{-}S$. Note that, in this case, $\mathfrak{In}^{-1}(M_{(R\times S)}) = \mathfrak{In}^{-1}(M_R) \times \mathfrak{In}^{-1}(M_S)$. From here, the isomorphism is clear.
\end{proof} \\

One may conjecture that if the injective profile of a ring $R$ may be decomposed as the product of two nontrivial lattices then a decomposition of the ring exists that explains the phenomena. That is, however, not the case. See Example \ref{quiver}.

As a consequence of Proposition \ref{product2}, we have that if $R$ is any ring and $S$ is a semisimple artinian ring, then $i\mathcal{P}(R) \cong i\mathcal{P}(R\times S)$. The following proposition is clear.

\begin{proposition}
Let $R$ and $S$ be Morita-equivalent rings. Then, $i\mathcal{P}(R) \cong i\mathcal{P}(S)$.
\end{proposition}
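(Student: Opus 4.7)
The plan is to transport the $i$-profile across a Morita equivalence using the fact that relative injectivity is a categorical property. Let $F: \Mod\text{-}R \to \Mod\text{-}S$ be a category equivalence with quasi-inverse $G$; both functors are additive and preserve and reflect monomorphisms, epimorphisms, and isomorphisms.

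The first step is the key lemma: for any $R$-modules $M, N$, the module $M$ is $N$-injective if and only if $F(M)$ is $F(N)$-injective. This reduces to observing that $F$ sets up a natural bijection between diagrams of the form $K \hookrightarrow N$ with a map $K \to M$ and diagrams $F(K) \hookrightarrow F(N)$ with a map $F(K) \to F(M)$, and that extensions correspond under this bijection. Since every submodule of $F(N)$ is, up to isomorphism, of the form $F(K)$ for some $K \leq N$, the extension property transfers faithfully. Consequently, if $\mathcal{A} = \idom^{-1}(M) \in i\mathcal{P}(R)$, then the class $F(\mathcal{A}) := \{N' \in \Mod\text{-}S : N' \cong F(N) \text{ for some } N \in \mathcal{A}\}$ equals $\idom^{-1}(F(M))$, which is an $i$-portfolio of $S$. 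Note that closure under isomorphism on both sides (inherent to Wisbauer classes, and hence to $i$-portfolios) ensures that this assignment does not depend on representatives.

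Next, I would define $\Phi: i\mathcal{P}(R) \to i\mathcal{P}(S)$ by $\Phi(\idom^{-1}(M)) = \idom^{-1}(F(M))$, and analogously $\Psi$ using $G$. The identities $GF \cong \mathrm{id}_{\Mod\text{-}R}$ and $FG \cong \mathrm{id}_{\Mod\text{-}S}$, combined with the observation that $\idom^{-1}(M) = \idom^{-1}(M')$ whenever $M \cong M'$, yield $\Psi\Phi = \mathrm{id}$ and $\Phi\Psi = \mathrm{id}$, so $\Phi$ is a bijection. Finally, for the lattice structure: inclusion of $i$-portfolios is preserved, since $\mathcal{A} \subseteq \mathcal{B}$ in $i\mathcal{P}(R)$ if and only if every module in $\mathcal{A}$ belongs to $\mathcal{B}$, and $F$ (together with $G$) preserves this containment up to isomorphism. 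As the lattice operations in $i\mathcal{P}(R)$ are determined by the partial order (meets are intersections, by Lemma \ref{intersection}, and joins are obtained as the meet of upper bounds), $\Phi$ is automatically a lattice isomorphism.

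The main conceptual point, and the only place where anything needs real verification, is the transfer of the relative injectivity property under $F$; everything else is formal from the equivalence of categories. This step is essentially standard since relative injectivity is defined purely through the existence of liftings along monomorphisms, a property invariant under any additive equivalence.
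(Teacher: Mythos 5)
Your proof is correct, and it fills in precisely the standard argument that the paper omits: the paper states this proposition without proof (``The following proposition is clear''), the point being exactly what you verify, namely that relative injectivity is a categorical notion preserved and reflected by an equivalence, so the induced map on injectivity domains is an order (hence lattice) isomorphism. Nothing further is needed.
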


\begin{example}\label{zeta}
We calculate $i\mathcal{P}(\mathbb{Z})$. Let $\mathcal{W}$ be a Wisbauer class containing the semisimple modules, and suppose $\mathcal{W} \not= \Mod\text{-}\mathbb{Z}$. Then, for every prime number $p$ there exists $\alpha_p \in \mathbb{Z}^{+}\cup\{\infty\}$ such that $\mathcal{W} = \sigma[\bigoplus_{p \; \text{prime}} \mathbb{Z}_{p^{\alpha_p}}]$. Note that, if $\alpha_p = \infty$ for every $p$, then $\mathcal{W}$ is the class of torsion modules and, in this case, $\mathcal{W} = \mathfrak{In}^{-1}(\mathbb{Z})$. If not, notice that $\mathcal{W} = \mathfrak{In}^{-1}(\bigoplus_{p \; prime} \mathbb{Z}_{p^{\alpha_p}})$. Finally, if $\mathcal{W} = \Mod\text{-}\mathbb{Z}$, then $\mathcal{W} = \mathfrak{In}^{-1}(\mathbb{Q})$. So, in each case, $\mathcal{W}$ is an $i$-portfolio. Thus, $i\mathcal{P}(\mathbb{Z}) = [\ssmod\text{-}\mathbb{Z}, \Mod\text{-}\mathbb{Z}]$.
\end{example} 

As we shall see, the conclusion of Example \ref{zeta}, namely that any Wisbauer class containing the semisimples is a portfolio, is not particular of the integers, but it holds for any ring. To prove this, we use the following notion.

\begin{definition}
Let $M$ and $N$ be modules. We say that $M$ rises to $N$, and write $M \uparrow N$, if every $M$-injective module is $N$-injective.
\end{definition}

Note that, by definition, if $N \in \sigma[M]$ then $M \uparrow N$. Also, if $M \uparrow N$ and $N \uparrow K$, then $M \uparrow K$. In particular, if $M \uparrow N$ and $K \leq N$, then $M \uparrow K$. \\

We show that, under a rather reasonable hypothesis, the condition $M \uparrow N$ is actually equivalent to $N \in \sigma[M]$. Note that if $M$ is any module and $N$ is a semisimple module, then $M \uparrow N$, so we cannot expect the implication $M \uparrow N \Rightarrow N \in \sigma[M]$ to hold unless $M$ subgenerates every semisimple module. As it turns out, this condition is indeed sufficient for the equivalence. The following lemma is a building block towards that conclusion. 


\begin{lemma}\label{notzero}
Let $M$ and $N$ be modules, and assume $M \uparrow N$. Then, either $N$ is semisimple or $\Tr_{\sigma[M]}(N) \not= 0$.
\end{lemma}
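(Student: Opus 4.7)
The plan is to prove the contrapositive: assuming $N$ is not semisimple and $\Tr_{\sigma[M]}(N) = 0$, I will exhibit an explicit module that is $M$-injective but not $N$-injective, thereby contradicting $M \uparrow N$. The clean choice of witness turns out to be a submodule of $N$ itself, which makes the argument short once the candidate is identified.

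First I would unpack the hypothesis $\Tr_{\sigma[M]}(N)=0$. Since the image of any $f\colon X\to N$ with $X\in\sigma[M]$ is simultaneously a quotient of $X$ (hence still in $\sigma[M]$) and a submodule of $N$, it lies inside $\Tr_{\sigma[M]}(N)$. Thus $\Tr_{\sigma[M]}(N)=0$ is equivalent to $\Hom(X,N)=0$ for every $X\in\sigma[M]$. The same reasoning applied to any submodule $K\le N$ gives $\Tr_{\sigma[M]}(K)=0$, and hence $\Hom(L,K)=0$ for every $L\in\sigma[M]$.

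Next, since $N$ is not semisimple, some submodule $K\le N$ fails to be a direct summand of $N$. I claim this $K$ is the desired witness. On the one hand, $K$ is not $N$-injective: if it were, then applying the definition of $N$-injectivity to the inclusion $K\hookrightarrow N$ and the identity $\mathrm{id}_K\colon K\to K$ would produce a retraction $N\to K$, making $K$ a direct summand — contrary to the choice of $K$. On the other hand, $K$ is $M$-injective: for any $L\le M$ we have $L\in\sigma[M]$, so $\Hom(L,K)=0$ by the observation above, and hence the only morphism $L\to K$ is the zero morphism, which obviously extends to the zero morphism $M\to K$.

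Putting the two halves together, $K$ is $M$-injective but not $N$-injective, contradicting $M\uparrow N$. I do not foresee a real obstacle here: the only place one might hesitate is step two, in making sure that $N$ non-semisimple really does supply a submodule that is not a direct summand, but this is just the definition of semisimplicity (every submodule is a direct summand). The key insight — and the thing that makes the proof compact — is noticing that when $\Tr_{\sigma[M]}(N)=0$, the \emph{trivial} extension of zero morphisms is enough to make a submodule $M$-injective, so the failure of splitting inside $N$ translates immediately into a failure of relative injectivity.
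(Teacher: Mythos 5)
Your proof is correct and follows essentially the same route as the paper's: both take a non-direct-summand $K\leq N$, observe that $K$ is not $N$-injective, and use $\Tr_{\sigma[M]}(K)=0$ to conclude $K$ is $M$-injective, contradicting $M\uparrow N$. You merely spell out the step the paper leaves implicit, namely that $\Tr_{\sigma[M]}(K)=0$ forces $\Hom(L,K)=0$ for every submodule $L\leq M$, so only zero maps need extending.
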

\begin{proof}
Assume that $N$ is not semisimple, and assume, on the contrary, that $\Tr_{\sigma[M]}(N) = 0$. Let $K \leq N$ be a non-direct summand. Then, $K$ is not $N$-injective. Since $\Tr_{\sigma[M]}(K) = 0$, $K$ is $M$-injective, a contradiction with our assumption. Then, $\Tr_{\sigma[M]}(N) \not= 0$. 
\end{proof}



\begin{theorem}\label{theorem1}
Let $R$ be any ring, and let $M$ be a module that subgenerates every semisimple module. Then, for any module $N$, $M \uparrow N$ if and only if $N \in \sigma[M]$.
\end{theorem}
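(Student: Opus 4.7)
The ``if'' direction is immediate from the comment made after the definition of $\uparrow$: the class $\idom^{-1}(X)$ is a Wisbauer class for every module $X$, so if it contains $M$ then it contains all of $\sigma[M]$, and in particular $N$. For the converse, I would argue by contradiction. Assume $M \uparrow N$ but $N \notin \sigma[M]$, and set $T := \Tr_{\sigma[M]}(N)$; this is the largest submodule of $N$ in $\sigma[M]$ and, by assumption, $T \subsetneq N$. The plan is to produce $T \subsetneq S \leq N$ with $S \in \sigma[M]$, contradicting the maximality of $T$.

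I would build $S$ and embed it into an object of $\sigma[M]$ as follows. Because $\idom^{-1}$ is closed under quotients, $M \uparrow N/T$; applying Lemma \ref{notzero} to $N/T$ yields either $N/T$ semisimple, whence $N/T \in \sigma[M]$ by the hypothesis on $M$, or $\Tr_{\sigma[M]}(N/T) \neq 0$ outright. In either case the trace is nonzero, so setting $S/T := \Tr_{\sigma[M]}(N/T)$ gives $T \subsetneq S \leq N$ with $S/T \in \sigma[M]$. Now let $\hat T := \Tr_{\sigma[M]}(E(T))$; a standard verification, using injectivity of $E(T)$ together with closure of $\sigma[M]$ under quotients and direct sums, shows $\hat T \in \sigma[M]$ and that $\hat T$ is $L$-injective for every $L \in \sigma[M]$, so in particular $\hat T$ is $M$-injective. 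Since $S \leq N$ and $\idom^{-1}$ is closed under submodules, $M \uparrow S$, and therefore $\hat T$ is also $S$-injective. Extending the inclusion $T \hookrightarrow \hat T$ to a morphism $\tilde\iota : S \to \hat T$ and pairing with the canonical projection $\pi : S \to S/T$ produces a map $S \to \hat T \oplus (S/T)$ whose kernel is $T \cap \ker \tilde\iota = 0$, because $\tilde\iota$ restricts to the inclusion on $T$. Hence $S$ embeds into $\hat T \oplus (S/T) \in \sigma[M]$, forcing $S \in \sigma[M]$ and giving the desired contradiction.

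The subtle point, in my view, is precisely this last embedding: $S$ is not a priori known to lie in $\sigma[M]$, yet the inherited relation $M \uparrow S$ suffices to promote the $\sigma[M]$-injectivity of $\hat T$ to $S$-injectivity, which is exactly what is needed to construct $\tilde\iota$. Apart from this, the argument is mostly bookkeeping, and the hypothesis that $M$ subgenerates every semisimple module enters only to close the semisimple branch of Lemma \ref{notzero}.
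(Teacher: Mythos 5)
Your proof is correct, but it follows a genuinely different route from the paper's. The paper shows directly that $\Tr_{\sigma[M]}(N)$ is \emph{essential} in $N$, by applying Lemma \ref{notzero} to every nonzero submodule $T \leq N$ (using $M \uparrow T$); essentiality gives $E(\Tr_{\sigma[M]}(N)) = E(N)$, and then the $N$-injectivity of the $M$-injective module $\Tr_{\sigma[M]}(E(N))$, which is essential in $E(N)$, forces the inclusion $N \hookrightarrow E(N)$ to land inside $\Tr_{\sigma[M]}(E(N)) \in \sigma[M]$. You instead argue by contradiction against the maximality of the trace $T = \Tr_{\sigma[M]}(N)$: a single application of Lemma \ref{notzero}, to the quotient $N/T$ rather than to submodules, produces a strictly larger $S$ with $S/T \in \sigma[M]$, and your embedding $S \hookrightarrow \hat T \oplus (S/T)$ puts $S$ itself in $\sigma[M]$. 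Both arguments pivot on the same two facts --- that $\uparrow$ passes to subquotients and that the trace of $\sigma[M]$ in an injective module is $M$-injective (hence, via $M \uparrow -$, injective relative to the module at hand) --- but you trade the paper's essentiality argument for an extension-and-splitting construction. Your version localizes the use of Lemma \ref{notzero} to one quotient and avoids discussing essential extensions altogether, at the cost of the extra bookkeeping of the map into $\hat T \oplus (S/T)$; the paper's version is slightly more economical once essentiality is in hand and exhibits $N$ directly as a submodule of an explicit member of $\sigma[M]$. All the steps you flag as delicate do go through: $T$ is indeed the largest submodule of $N$ lying in $\sigma[M]$, $\ker\tilde\iota \cap T = 0$ because $\tilde\iota$ extends the inclusion, and the hypothesis that $M$ subgenerates all semisimples is used in exactly the same way in both proofs (to absorb the semisimple branch of Lemma \ref{notzero}).
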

\begin{proof}
$[\Leftarrow]$ is clear. For $[\Rightarrow]$, assume that $M$ subgenerates every semisimple module and that $M \uparrow N$. If $N$ is semisimple, we are done. If not, we show first that $\Tr_{\sigma[M]}(N)$ must be essential in $N$. Indeed, let $T \leq N$ be a nonzero submodule. Then, $T\cap\Tr_{\sigma[M]}(N) = \Tr_{\sigma[M]}(T).$ If $T$ is semisimple, $\Tr_{\sigma[M]}(T) = T$. If $T$ is not semisimple then, by Lemma \ref{notzero} and noting that $M \uparrow T$, we have that $\Tr_{\sigma[M]}(T) \not= 0$. Then, $\Tr_{\sigma[M]}(N)$ is essential in $N$. This implies that $E(\Tr_{\sigma[M]}(N)) = E(N)$. Now, $\Tr_{\sigma[M]}(N) \leq \Tr_{\sigma[M]}(E(N)) \leq E(N)$, which implies that $E(\Tr_{\sigma[M]}(E(N))) = E(N)$. Note that $\Tr_{\sigma[M]}(E(N))$ is $M$-injective. By our assumptions, it is $N$-injective. Hence, every morphism $N \rightarrow E(\Tr_{\sigma[M]}(E(N))) = E(N)$ has its image in $\Tr_{\sigma[M]}(E(N))$. In particular, if we take the inclusion morphism we have that $N \leq \Tr_{\sigma[M]}(E(N))$. Hence, $N \in \sigma[M]$.
\end{proof}

\begin{theorem}\label{every}
Let $R$ be a ring, and let $\mathcal{W}$ be a Wisbauer class in $\Mod\text{-}R$ such that $\ssmod\text{-}R \subseteq \mathcal{W}$. Then, $\mathcal{W}$ is an $i$-portfolio, that is, there exists a module $M$ such that $\mathfrak{In}^{-1}(M) = \mathcal{W}$. In other words, the following lattices are the same:
\begin{enumerate}
\item $i\mathcal{P}(R)$.
\item The interval $[\ssmod\text{-}R, \Mod\text{-}R] \subseteq \wis\text{-}R$.
\end{enumerate}
And, consequently, the following lattices are isomorphic.
\begin{enumerate}
\item$i\mathcal{P}(R)$.
\item The lattice of linear filters of right ideals $\mathfrak{F}$ such that $I \in \mathfrak{F}$ for any maximal right ideal $I$.
\item The lattice of left exact preradicals $\tau$ such that $\Soc \leq \tau$. 
\item The lattice of fully invariant submodules $M$ of any main injective module $\overline{E}$ such that $\Soc(\overline{E}) \subseteq M$.
\end{enumerate}
\end{theorem}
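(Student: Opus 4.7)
The plan is to produce, for each Wisbauer class $\mathcal{W} \supseteq \ssmod\text{-}R$, a module $M$ with $\idom^{-1}(M) = \mathcal{W}$. Let $W_0$ be a direct sum of a complete set of representatives of isomorphism classes of cyclic modules of $\mathcal{W}$, so that $\sigma[W_0] = \mathcal{W}$, and let $\tau$ denote the left exact preradical corresponding to $\mathcal{W}$ under Proposition~\ref{torsiontheory}; thus $\mathcal{W} = \{N : \tau(N) = N\}$, and $\Soc \leq \tau$ since $\ssmod\text{-}R \subseteq \mathcal{W}$. Taking a cue from Example~\ref{zeta}, where the choice of $M$ combines an injective object of $\sigma[\mathcal{W}]$ with a ``$\tau$-torsion-free'' witness such as $\mathbb{Z}$, I propose
\[
M \;:=\; \Tr_{\sigma[W_0]}(E(W_0)) \,\oplus\, R/\tau(R).
\]
The first summand $\widehat{W_0} := \Tr_{\sigma[W_0]}(E(W_0))$ is an injective object of the Grothendieck category $\sigma[W_0]$ by the standard torsion-theoretic fact that the trace of a hereditary pretorsion class in an injective module is injective therein. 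The second summand is the largest $\tau$-torsion-free quotient of the regular module, and vanishes precisely when $\mathcal{W} = \Mod\text{-}R$.

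The inclusion $\mathcal{W} \subseteq \idom^{-1}(M)$ I would check summand by summand. For every $N \in \mathcal{W}$, the summand $\widehat{W_0}$ is $N$-injective by its injectivity in $\sigma[\mathcal{W}]$. For the second summand, idempotence of $\tau$ yields $\tau(R/\tau(R)) = 0$, so any homomorphism from a submodule $L \leq N \in \mathcal{W}$ to $R/\tau(R)$ has image that is both $\tau$-torsion (as a quotient of $L \in \mathcal{W}$) and $\tau$-torsion-free (as a submodule of $R/\tau(R)$), hence is zero and so extends trivially. For the reverse inclusion $\idom^{-1}(M) \subseteq \mathcal{W}$, assume $M$ is $N$-injective and, by way of contradiction, that $\tau(N) \subsetneq N$. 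Pick $x \in N \setminus \tau(N)$; the cyclic submodule $xR$ fails to lie in $\mathcal{W}$, so its annihilator lies outside the linear filter associated to $\mathcal{W}$. The plan is to exploit $x$ and the canonical projection $R \twoheadrightarrow R/\tau(R)$ to construct a homomorphism $f: L \to R/\tau(R)$ from a suitable submodule $L \leq N$ that does not extend to $N$, contradicting the $N$-injectivity of the summand $R/\tau(R)$ of $M$; this forces $\tau(N) = N$, so $N \in \mathcal{W}$.

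Once the equality $i\mathcal{P}(R) = [\ssmod\text{-}R, \Mod\text{-}R]$ has been established, the remaining three isomorphisms of the statement follow immediately from Proposition~\ref{torsiontheory} by restricting each of the four equivalent lattices to the interval of objects above the semisimple class: a Wisbauer class contains $\ssmod\text{-}R$ if and only if the corresponding linear filter contains every maximal right ideal, if and only if the corresponding left exact preradical $\tau$ satisfies $\Soc \leq \tau$, if and only if the corresponding fully invariant submodule of a main injective $\overline{E}$ contains $\Soc(\overline{E})$. I expect the main technical obstacle to lie in the final step of the reverse inclusion: explicitly constructing the non-extendable map $f$ from the sole hypothesis $x \notin \tau(N)$ and verifying concretely that the $\tau$-torsion-free quotient $R/\tau(R)$ detects every $N \notin \mathcal{W}$ via failure of relative injectivity, rather than being circumvented by some unforeseen compatibility between the annihilator of $x$ and $\tau(R)$.
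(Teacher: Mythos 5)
Your forward inclusion $\mathcal{W} \subseteq \idom^{-1}(M)$ is fine, and the reduction of the four lattice isomorphisms to Proposition \ref{torsiontheory} matches the paper. The gap is in the reverse inclusion, which is the entire content of the theorem, and your stated plan for it is not merely incomplete but fails on a concrete example. You propose to detect every $N \notin \mathcal{W}$ by exhibiting a non-extendable map from a submodule of $N$ into $R/\tau(R)$. Take $R = \mathbb{Z}$, $\mathcal{W} = \ssmod\text{-}\mathbb{Z}$, so $\tau = \Soc$ and $R/\tau(R) = \mathbb{Z}$, and take $N = \mathbb{Z}_4 \notin \mathcal{W}$. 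By Example \ref{zeta}, $\idom^{-1}(\mathbb{Z})$ is the class of all torsion groups, so $\mathbb{Z}$ \emph{is} $\mathbb{Z}_4$-injective (every homomorphism from a subgroup of $\mathbb{Z}_4$ into $\mathbb{Z}$ is zero); the torsion-free summand detects nothing here, and the claim that ``$R/\tau(R)$ detects every $N \notin \mathcal{W}$'' is false. In this example the first summand $\Tr_{\sigma[W_0]}(E(W_0)) = \bigoplus_p \mathbb{Z}_p$ happens to do the job, but you give no argument that for an arbitrary $N \notin \mathcal{W}$ at least one of your two \emph{fixed} summands fails to be $N$-injective, and that is genuinely nontrivial: the module that naturally witnesses $N \notin \sigma[W_0]$ in the paper's argument is $\Tr_{\sigma[W_0]}(E(N))$ --- the trace in the injective hull of the module being tested --- which varies with $N$ and need not embed in, or be detected by, your $N$-independent module $M$.

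The paper sidesteps the need for a single explicit witness altogether. It first proves (Theorem \ref{theorem1}, via Lemma \ref{notzero}) that if $W_0$ subgenerates all semisimples then $W_0 \uparrow N$ implies $N \in \sigma[W_0]$; contrapositively, for each $K \notin \mathcal{W}$ there is \emph{some} portfolio containing $W_0$ but not $K$. Hence $\mathcal{W}$ is the intersection of all portfolios containing $W_0$, and Lemma \ref{intersection} (injectivity domains of products) makes that intersection a portfolio, realized by a large product of modules rather than by one canonical $M$. If you want to salvage your construction, you must either prove directly that $\Tr_{\sigma[W_0]}(E(W_0)) \oplus R/\tau(R)$ is not $N$-injective for every $N \notin \mathcal{W}$ --- replacing the $R/\tau(R)$-based argument, which the example above rules out --- or fall back on an $N$-dependent witness and an intersection argument as in the paper.
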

\begin{proof}
Let $N$ be a module such that $\mathcal{W} = \sigma[N]$. Let $\mathbb{X}$ be the collection of portfolios $\mathcal{A}$ such that $N \in \mathcal{A}$. We claim that $\sigma[N] = \bigcap\mathbb{X}$. It is clear that $\sigma[N] \subseteq \bigcap\mathbb{X}$. Not let $K$ be a module not in $\sigma[N]$. Since $N$ subgenerates every semisimple module, Theorem \ref{theorem1} implies that there exists a portfolio $\mathcal{B}$ with $N \in \mathcal{B}$ and $K \not\in \mathcal{B}$. Then, $\sigma[N] = \bigcap\mathbb{X}$. By Lemma \ref{intersection}, $\sigma[N]$ is a portfolio. Then, the lattices $i\mathcal{P}(R)$ and $[\ssmod\text{-}R, \Mod\text{-}R] \subseteq \wis\text{-}R$ are isomorphic. The second result is a refinement of Proposition \ref{torsiontheory}.
\end{proof}

\begin{corollary}\label{coatomic}
For any ring $R$, the lattice $i\mathcal{P}(R)$ is modular and coatomic.
\end{corollary}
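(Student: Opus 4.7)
The plan is to leverage the identifications from Theorem \ref{every}, which realize $i\mathcal{P}(R)$ as the interval $[\ssmod\text{-}R, \Mod\text{-}R]$ of $\wis\text{-}R$, equivalently as an interval of the lattice $\mathcal{S}_{fi}(\overline{E})$ of fully invariant submodules of a main injective $\overline{E}$, and equivalently as the sublattice of $\fil\text{-}R$ consisting of linear filters containing every maximal right ideal. Each direction of the argument is cleanest in a different one of these three incarnations.

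For modularity, I would work in the fully invariant submodule picture. The submodule lattice of any module is modular (the classical modular law for submodules), and modularity is inherited by any sublattice closed under the ambient meet and join, and by any interval of such a sublattice. Since the fully invariant submodules of $\overline{E}$ form a sublattice of $\mathcal{S}(\overline{E})$ (being closed under sums and intersections), and Theorem \ref{every} exhibits $i\mathcal{P}(R)$ as an interval of $\mathcal{S}_{fi}(\overline{E})$, modularity follows immediately.

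For coatomicity, I would switch to the filter picture and apply Zorn's Lemma. Let $\mathcal{W} \subsetneq \Mod\text{-}R$ be a proper $i$-portfolio, and let $\mathfrak{F}$ be its corresponding linear filter (already containing every maximal right ideal). Note $0 \notin \mathfrak{F}$, since otherwise axiom F3 would force $\mathfrak{F}$ to contain every right ideal, and hence $\mathcal{W} = \Mod\text{-}R$. Consider the poset $\mathcal{C}$ of linear filters $\mathfrak{G}$ with $\mathfrak{F} \subseteq \mathfrak{G}$ and $0 \notin \mathfrak{G}$. The key verification is that a chain in $\mathcal{C}$ has its directed union in $\mathcal{C}$: F1 and F3 pass trivially to unions; F2 and F4 involve only finitely many ideals at a time, hence their witnesses lie in a single term of the chain; and $0$ is absent from the union because it is absent from every term. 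Zorn's Lemma then supplies a maximal element of $\mathcal{C}$, which corresponds under Theorem \ref{every} to a coatom of $i\mathcal{P}(R)$ lying above $\mathcal{W}$.

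The main point to double-check is that ``properness'' translates correctly across the lattice isomorphisms of Theorem \ref{every} (namely, $\mathcal{W} \neq \Mod\text{-}R$ precisely when $0$ is missing from the corresponding filter); once this is noted, modularity is essentially free from the ambient submodule lattice, and coatomicity is a routine Zorn argument.
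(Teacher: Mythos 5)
Your proposal is correct and follows essentially the same route as the paper's (very terse) proof: modularity is read off from the interval of fully invariant submodules of a main injective module (lattice (4) of Theorem \ref{every}), and coatomicity comes from a Zorn's Lemma argument in the lattice of linear filters containing all maximal right ideals (lattice (2)). Your version simply supplies the details the paper leaves implicit, including the correct observation that properness of a portfolio corresponds to the absence of $0$ from the associated filter.
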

\begin{proof}
The lattice (4) in the list of isomorphic lattices of Theorem \ref{every} is clearly modular. By Zorn's lemma, the lattice (2) in the same list is coatomic.
\end{proof}


Note that $i\mathcal{P}(\mathbb{Z})$ has only one coatom, namely the class of torsion modules, which is the injectivity domain of $\mathbb{Z}$. As a first application of Theorem \ref{every}, we show that this is indeed the case for all right uniform rings.

\begin{proposition}
Let $R$ be a right uniform ring. Then, $i\mathcal{P}(R)$ has only one coatom, namely the class of singular modules, which is the injectivity domain of any non-injective nonsingular module.
\end{proposition}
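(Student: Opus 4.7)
The approach is to use the identification from Theorem \ref{every} of $i\mathcal{P}(R)$ with the lattice of linear filters of right ideals that contain every maximal right ideal, where the top element is the filter of \emph{all} right ideals (corresponding to $\Mod\text{-}R$). Under this identification, coatoms of $i\mathcal{P}(R)$ correspond to maximal proper filters.

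First I locate the unique coatom on the filter side. By axiom F3, a filter $\mathfrak{F}$ is proper exactly when $0 \notin \mathfrak{F}$. Since $R_R$ is uniform, every nonzero right ideal is essential in $R_R$, so any proper $\mathfrak{F}$ is contained in the filter $\mathfrak{F}_{\mathrm{ess}}$ of essential right ideals. The set $\mathfrak{F}_{\mathrm{ess}}$ is itself a linear filter (the only nontrivial check is F4, which is the standard fact that $(I:r)$ is essential whenever $I$ is) and contains every maximal right ideal, so $\mathfrak{F}_{\mathrm{ess}}$ lies in our lattice and dominates every other proper element. Hence it is the unique maximal proper filter.

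Next I translate back via Proposition \ref{torsiontheory}: $\mathfrak{F}_{\mathrm{ess}}$ corresponds to the hereditary pretorsion class $\{N : \mathrm{ann}(n) \in \mathfrak{F}_{\mathrm{ess}} \text{ for every } n \in N\}$, which is precisely the class of singular modules. So singulars is the unique coatom of $i\mathcal{P}(R)$.

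For the realization claim, let $M$ be any nonsingular non-injective module. By Baer's criterion $M$ is not $R$-injective, so $\mathfrak{In}^{-1}(M) \subsetneq \Mod\text{-}R$; combining coatomicity (Corollary \ref{coatomic}) with uniqueness of the coatom gives $\mathfrak{In}^{-1}(M) \subseteq$ singulars. For the reverse inclusion, given singular $N$, a submodule $K \leq N$, and $\varphi \colon K \to M$, extend $\varphi$ via the injective hull to $\tilde{\varphi} \colon N \to E(M)$; since $M$ is nonsingular so is $E(M)$, and any morphism from a singular module to a nonsingular one vanishes, so $\tilde{\varphi} = 0$ and hence $\varphi = 0$. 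Then the zero map $N \to M$ extends $\varphi$, giving $N \in \mathfrak{In}^{-1}(M)$. The only real technical hurdle is the first step, namely setting up the correspondence between "coatom of $i\mathcal{P}(R)$" and "maximal proper filter containing every maximal right ideal"; once that is in place, uniformity collapses "essential" to "nonzero" and everything else is automatic.
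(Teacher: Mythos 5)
Your proposal is correct and takes essentially the same route as the paper's proof: identify the class of singular modules with the linear filter of nonzero (equivalently, by uniformity, essential) right ideals, note that this is the largest proper linear filter and hence the unique coatom, and then realize it as $\mathfrak{In}^{-1}(M)$ for a non-injective nonsingular $M$ using the vanishing of homomorphisms from singular modules into nonsingular ones. The paper's version is simply more terse, asserting $\Hom(N,M)=0$ directly rather than passing through $E(M)$.
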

\begin{proof}
Note that, in this case, the linear filter corresponding to the Wisbauer class of singular modules is $\{I \leq R : I \not= 0\}$. From here, the first assertion is clear, as this linear filter is maximum in $\fil\text{-}R$. Now let $M$ be a non-injective nonsingular module. Since for every singular module $N$ we have that Hom$(N,M) = 0$, every singular module is in the injectivity domain of $M$. Then, $\mathfrak{In}^{-1}(M)$ is precisely the class of singular modules.
\end{proof} \\

We saw in Corollary \ref{coatomic} that $i\mathcal{P}(R)$ is a modular and coatomic lattice. If we set other conditions on $R$, we get a nicer lattice. Recall that a ring $R$ is said to be a right QI-ring if every quasi-injective right $R$-module is injective (\cite{boyle}). Since they were introduced, QI-rings have played a central role in ring theory. The following proposition tells us that QI-rings have a particularly well-behaved $i$-profile. Note that any QI-ring is necessarily a right noetherian right $V$-ring.

\begin{proposition}
Let $R$ be a $QI$-ring. Then, $i\mathcal{P}(R)$ is distributive.
\end{proposition}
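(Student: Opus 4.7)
The plan is to use Theorem \ref{every}(3), which identifies $i\mathcal{P}(R)$ with the lattice of left exact preradicals $\tau$ on $\Mod\text{-}R$ satisfying $\Soc \leq \tau$, and to show that the ambient lattice $\lep\text{-}R$ embeds as a sublattice of a Boolean lattice.

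Since $R$ is QI it is right noetherian, so by Matlis' theorem the main injective module $\overline{E}$ decomposes as $\overline{E} = \bigoplus_{i \in I} E_i^{(\kappa_i)}$, grouped so that the $E_i$ are pairwise non-isomorphic indecomposable injectives. For any $\tau \in \lep\text{-}R$, the submodule $\tau(E_i)$ is fully invariant in $E_i$ (because $\tau$ is a subfunctor of the identity) and is therefore quasi-injective: any endomorphism of $\tau(E_i)$ extends, via injectivity of $E_i$, to an endomorphism of $E_i$, and full invariance brings the extension back into $\tau(E_i)$. The QI hypothesis upgrades quasi-injective to injective; since $E_i$ is uniform and injective modules admit no proper essential extension, one concludes $\tau(E_i) \in \{0, E_i\}$. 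Setting $J_\tau := \{i \in I : \tau(E_i) = E_i\}$ and using that left exact preradicals commute with arbitrary direct sums, we get $\tau(\overline{E}) = \bigoplus_{i \in J_\tau} E_i^{(\kappa_i)}$.

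By the main injective property $\tau$ is recovered from $\tau(\overline{E})$ and hence from $J_\tau$, so $\tau \mapsto J_\tau$ is an injection $\lep\text{-}R \hookrightarrow 2^I$. This map preserves meets directly from the intersection description, and preserves joins because the fully invariant submodule $N := \tau_1(\overline{E}) + \tau_2(\overline{E}) = \bigoplus_{i \in J_{\tau_1} \cup J_{\tau_2}} E_i^{(\kappa_i)}$ gives an upper bound $\omega^{\overline{E}}_N$ of $\tau_1, \tau_2$ that sits below the join, forcing $\tau_1 \vee \tau_2 = \omega^{\overline{E}}_N$ and thus $J_{\tau_1 \vee \tau_2} = J_{\tau_1} \cup J_{\tau_2}$. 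Hence $\lep\text{-}R$ embeds as a sublattice of the Boolean lattice $(2^I, \subseteq)$, so it is distributive; the sublattice with $\Soc \leq \tau$ is then distributive as a sublattice of a distributive lattice, giving distributivity of $i\mathcal{P}(R)$. The decisive use of the QI hypothesis is in establishing $\tau(E_i) \in \{0, E_i\}$; without this rigidity, the values $\tau(E_i)$ could generate non-Boolean sublattices of $E_i$ and the index-set argument would collapse.
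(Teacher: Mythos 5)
Your proof is correct and follows essentially the same route as the paper: decompose a main injective into indecomposable injectives (the paper cites \cite[Remark 2.7]{raggi1} rather than Matlis), use the QI hypothesis to force each indecomposable injective to have only trivial fully invariant submodules (equivalently $\tau(E_i)\in\{0,E_i\}$), and thereby embed the profile as a sublattice of a power set. The only difference is bookkeeping --- you work in the lattice of left exact preradicals and restrict to those above $\Soc$ at the end, while the paper works with fully invariant submodules of $\overline{E}$ and splits off the socle summand first.
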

\begin{proof}
Let $\{E_i\}_{i \in I}$ be a set of representatives of isomorphism classes of indecomposable injective right $R$-modules, and let $E = \bigoplus_{i \in I}E_i$. By \cite[Remark 2.7]{raggi1}, $E$ is a main injective module. Now, let $I' \subseteq I$ be such that $\{E_i\}_{i \in I'}$ is a set of representatives of isomorphism classes of simple modules. Let $J = I \setminus I'$. Let $E_1 = \bigoplus_{i \in I'} E_i$, $E_2 = \bigoplus_{i \in J} E_i$. Then, $\Soc(E) = E_1$ and $E = E_1 \oplus E_2$. Now, if $A \leq E$ is a fully invariant submodule, then $A = A_1 \oplus A_2$, with $A_1$ fully invariant in $E_1$ and $A_2$ fully invariant in $E_2$. Then, the set of fully invariant submodules of $E$ that contain $\Soc(E)$ is in bijective correspondence with the set of fully invariant submodules of $E_2$. Now, if $M$ is a fully invariant submodule of $E_2$, then $M = \bigoplus_{i \in J} M_i$, with $M_i$ fully invariant in $E_i$ for all $i \in J$. Since $R$ is a QI-ring and $E_i$ is indecomposable injective for all $i \in J$, the only fully invariant submodules of $E_i$ are $0$ and $E_i$. For any fully invariant submodule $M = \bigoplus_{i \in J}M_i$, let $\varphi(M) = \{j \in J : M_j = E_j\} \subseteq J$. Note that, for $M, N$ fully invariant submodules of $E_2$, $\varphi(M + N) = \varphi(M) \cup \varphi(N)$ and $\varphi(M\cap N) = \varphi(M)\cap\varphi(N)$. Then, $i{\cal P}(R)$ is isomorphic to a sublattice of $2^J$, so, in particular, it is a distributive lattice.
\end{proof}

\begin{proposition}\label{artinian}
Let $R$ be a right artinian ring. Then, $i\mathcal{P}(R)$ is anti-isomorphic to the lattice of ideals contained in $J(R)$. In particular, $i\mathcal{P}(R)$ is an artinian and noetherian lattice. Thus, $i\mathcal{P}(R)$ is also atomic.
\end{proposition}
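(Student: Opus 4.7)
The plan is to go through the chain of lattice isomorphisms provided by Theorem \ref{every}, translate the statement into a statement about linear filters of right ideals, and then show that in the right artinian setting linear filters have an especially simple form.

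First I would invoke Theorem \ref{every} to replace $i\mathcal{P}(R)$ by the isomorphic lattice of linear filters $\mathfrak{F}$ of right ideals containing every maximal right ideal of $R$. The key observation is that, in a right artinian ring, every linear filter $\mathfrak{F}$ is closed under arbitrary intersections: given a family $\{I_\alpha\}\subseteq \mathfrak{F}$, the collection of finite intersections of members of this family is a downward-directed family of right ideals, so by descending chain condition it has a minimum, which is itself some finite intersection $I_{\alpha_1}\cap\cdots\cap I_{\alpha_n}$ and hence lies in $\mathfrak{F}$ by axiom F2; but that minimum is exactly $\bigcap_\alpha I_\alpha$. By the cited Proposition 1.14 of Golan, it follows that $\mathfrak{F}=\eta(I)$ for a unique two-sided ideal $I$ of $R$.

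Next I would observe that $\eta(I)$ contains every maximal right ideal of $R$ if and only if $I$ is contained in every maximal right ideal, i.e.\ $I\leq J(R)$. This identifies the linear filters appearing in Theorem \ref{every} with the set of two-sided ideals contained in $J(R)$. The map $I\mapsto\eta(I)$ is manifestly inclusion-reversing: $I\leq I'$ iff $\eta(I')\subseteq\eta(I)$, and a straightforward check shows it respects joins and meets in the reversed sense, so composing with the isomorphism of Theorem \ref{every} yields the claimed anti-isomorphism between $i\mathcal{P}(R)$ and the lattice of ideals of $R$ contained in $J(R)$.

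For the final assertion, recall that by the Hopkins--Levitzki theorem a right artinian ring is also right noetherian, so the lattice of two-sided ideals of $R$ satisfies both chain conditions, and hence so does its sublattice of ideals contained in $J(R)$. The anti-isomorphism then transfers both chain conditions to $i\mathcal{P}(R)$. Atomicity follows from artinianness together with the existence of a least element $\ssmod\text{-}R$: given any element strictly above $\ssmod\text{-}R$, the descending chain condition produces a minimal element of $i\mathcal{P}(R)\setminus\{\ssmod\text{-}R\}$ below it, which is an atom. The main point to get right is the reduction in the second step that every linear filter is $\eta(I)$ for a two-sided $I$; the rest is essentially bookkeeping through the isomorphisms of Proposition \ref{torsiontheory} and Theorem \ref{every}.
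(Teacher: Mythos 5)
Your proposal is correct and follows essentially the same route as the paper: pass to the lattice of linear filters containing all maximal right ideals via Theorem \ref{every}, note that right artinian implies every linear filter is closed under arbitrary intersections, and apply Golan's Proposition 1.14 to write each such filter as $\eta(I)$ with $I\leq J(R)$. You additionally supply the DCC argument for intersection-closedness and the chain-condition/atomicity bookkeeping that the paper leaves implicit, but the underlying argument is identical.
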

\begin{proof}
Since $R$ is right artinian, every linear filter of right ideals of $R$ is closed under arbitrary intersections. Then, (\cite[Proposition 1.14]{golan}) for every linear filter $\mathfrak{F}$ there exists a two-sided ideal $I$ of $R$ such that $\mathfrak{F} = \eta(I) = \{J : I \leq J\}$. Since $i\mathcal{P}(R)$ is isomorphic to the lattice of linear filters of right ideals that contain every maximal right ideal, it follows that  $i\mathcal{P}(R)$ is anti-isomorphic to the lattice of ideals contained in $J(R)$.
\end{proof} \\

From Proposition \ref{artinian}, we see the following.

\begin{corollary}\label{art}
Let $R$ be a right artinian ring. Then, $R$ has no right $i$-middle class if and only if $J(R)$ contains no nontrivial ideals of $R$.
\end{corollary}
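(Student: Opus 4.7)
The plan is to read the result as nothing more than a translation under the anti-isomorphism supplied by Proposition~\ref{artinian}. First I would unpack ``no right $i$-middle class'': by definition (going back to the Er--L\'opez-Permouth--S\"okmez paper alluded to in the introduction), this says that for every module $M$ either $M$ is injective or $M$ is poor, equivalently that the two extreme Wisbauer classes are the only elements of $i\mathcal{P}(R)$, i.e. $i\mathcal{P}(R)=\{\ssmod\text{-}R,\Mod\text{-}R\}$. So the statement I want becomes: for a right artinian $R$, the poset $i\mathcal{P}(R)$ has at most two elements if and only if $J(R)$ contains no ideal $I$ of $R$ with $0<I<J(R)$.

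Now I would invoke Proposition~\ref{artinian} directly. It provides an order-reversing bijection between $i\mathcal{P}(R)$ and the lattice of ideals of $R$ contained in $J(R)$; under this anti-isomorphism the top element $\Mod\text{-}R$ of $i\mathcal{P}(R)$ corresponds to the zero ideal (since $\eta(0)$ is the whole lattice of right ideals) and the bottom element $\ssmod\text{-}R$ corresponds to $J(R)$ itself (since $\eta(J(R))$ is the smallest linear filter containing every maximal right ideal). Consequently the cardinality of $i\mathcal{P}(R)$ equals the cardinality of the ideal lattice $[\,0,J(R)\,]$ of $R$.

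With the anti-isomorphism in place the argument is just bookkeeping. If $J(R)$ contains a nontrivial ideal $I$ of $R$, i.e.\ $0<I<J(R)$, then $[\,0,J(R)\,]$ has at least three distinct elements $0,I,J(R)$, so $|i\mathcal{P}(R)|\geq 3$ and some module realizes a portfolio strictly between $\ssmod\text{-}R$ and $\Mod\text{-}R$, producing a middle class. Conversely, if $[\,0,J(R)\,]=\{0,J(R)\}$ then the anti-isomorphism forces $i\mathcal{P}(R)=\{\ssmod\text{-}R,\Mod\text{-}R\}$, giving no middle class; the degenerate case $J(R)=0$ collapses both sides to a single point, corresponding to $R$ semisimple artinian, where the conclusion is trivially true on both sides.

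I do not expect a serious obstacle here: the only subtlety is being careful that the two extremes of $i\mathcal{P}(R)$ really do match up with $0$ and $J(R)$ under the anti-isomorphism (rather than with a single endpoint), and that ``nontrivial ideal of $R$ contained in $J(R)$'' is interpreted as a two-sided ideal strictly between $0$ and $J(R)$, which is the meaning forced by the proof of Proposition~\ref{artinian}. Once those identifications are spelled out, the corollary is immediate.
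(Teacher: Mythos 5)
Your proposal is correct and is essentially identical to the paper's own treatment: the paper states Corollary~\ref{art} as an immediate consequence of Proposition~\ref{artinian}, with no further argument, and your write-up simply makes explicit the bookkeeping (matching $\Mod\text{-}R$ with the ideal $0$ and $\ssmod\text{-}R$ with $J(R)$ under the anti-isomorphism) that the paper leaves to the reader. Your reading of ``nontrivial ideal contained in $J(R)$'' as a two-sided ideal strictly between $0$ and $J(R)$ is also the interpretation the equivalence requires.
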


\begin{corollary}
Let $R$ be an artinian ring. Then, $i\mathcal{P}_{r}(R) \cong i\mathcal{P}_{\ell}(R)$. In particular, $R$ has no right $i$-middle class if and only if it has no left $i$-middle class.
\end{corollary}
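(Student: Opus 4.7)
The strategy is to apply Proposition \ref{artinian} on both sides and observe that the target of the resulting anti-isomorphisms does not depend on the side. Since $R$ is artinian in the two-sided sense, it is simultaneously right artinian and left artinian; moreover $J(R)$ is a two-sided ideal and so is any two-sided ideal contained in it. Consequently the lattice of two-sided ideals contained in $J(R)$ is an intrinsic invariant of $R$ with no ``sidedness'' attached.

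With that in hand, I would argue as follows. First, apply Proposition \ref{artinian} to the right artinian ring $R$ to obtain an anti-isomorphism $\Phi_r \colon i\mathcal{P}_r(R) \to \mathcal{L}$, where $\mathcal{L}$ denotes the lattice of two-sided ideals of $R$ contained in $J(R)$. Next, apply the left-handed version of Proposition \ref{artinian} (the proof given uses only the lattice isomorphism between the profile and the linear filters containing every maximal right ideal, together with the fact that artinianness forces every linear filter to be principally generated by a two-sided ideal; the analogous assertion on the left requires $R$ to be left artinian, which is included in our hypothesis) to obtain an anti-isomorphism $\Phi_\ell \colon i\mathcal{P}_\ell(R) \to \mathcal{L}$ with exactly the same target. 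Then $\Phi_\ell^{-1} \circ \Phi_r$ composes an anti-isomorphism with an anti-isomorphism and therefore yields the desired lattice isomorphism $i\mathcal{P}_r(R) \cong i\mathcal{P}_\ell(R)$.

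For the second assertion, I would invoke Corollary \ref{art} on each side. The ring $R$ has no right $i$-middle class precisely when $i\mathcal{P}_r(R)$ contains only the two elements $\ssmod\text{-}R$ and $\Mod\text{-}R$; under the anti-isomorphism $\Phi_r$ this corresponds to $\mathcal{L}$ having only the two elements $0$ and $J(R)$, i.e.\ $J(R)$ containing no nontrivial (two-sided) ideals of $R$. But this condition is manifestly left-right symmetric, so by the same argument on the left it is equivalent to $R$ having no left $i$-middle class.

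The only point that requires any care is verifying that the left-handed analogue of Proposition \ref{artinian} is available; once one accepts that Proposition \ref{artinian} holds equally on the left when $R$ is left artinian, the rest of the proof is purely formal, turning on the observation that two anti-isomorphic lattices with a common target are isomorphic and that the defining condition of having no middle class translates into a condition on $J(R)$ that is intrinsically two-sided.
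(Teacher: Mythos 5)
Your proposal is correct and is exactly the argument the paper intends: the corollary is stated as an immediate consequence of Proposition \ref{artinian} (applied on both sides), since the lattice of two-sided ideals contained in $J(R)$ is a side-independent invariant, and composing the two anti-isomorphisms yields the isomorphism $i\mathcal{P}_{r}(R) \cong i\mathcal{P}_{\ell}(R)$. Your observation that the left-handed version of Proposition \ref{artinian} requires left artinianness (supplied by the two-sided hypothesis) is the only point needing care, and you handle it correctly.
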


\begin{example}
Let $R$ be an artinian chain ring with composition length $\ell(R)$. Then, $i\mathcal{P}(R)$ is a linearly ordered lattice of length $\ell(R) - 1$.
\end{example}

\begin{example}\label{qfrings}
Let $R$ be a QF-ring. By \cite[Proposition 2.2]{raggi2}, $R$ is main injective as a right module over itself. Then, $i\mathcal{P}(R)$ is isomorphic to the lattice of ideals of $R/\Soc(R)$. Note that, in this case, $R$ has no (left or right) $i$-middle class if and only if $R/\Soc(R)$ is a simple artinian ring.
\end{example}

\section{Rings without injective middle class and rings with linearly ordered injective profile.}

It is clear that a ring is semisimple artinian if and only if $i\mathcal{P}(R)$ is a singleton. Rings whose $i$-profile consists of two elements (necessarily linearly ordered) are studied in \cite{er}, where the authors name them rings without injective middle class. The natural next step is to enquire about the structure of rings for which $i\mathcal{P}(R)$ is a chain. As we will see, this class of rings includes, among others, right chain rings.

If a ring has no right $i$-middle class then every non-semisimple quasi injective right module is injective. This notion is generalization of a right QI-ring (every quasi-injective right module is injective (cf \cite{boyle}). In \cite[Proposition 8]{er} it is shown that for a right SI-ring $R$ with homogeneous and essential right socle, $R$ has no right i-middle class if and only if every nonsemisimple quasi-injective right module is injective. The following proposition gets the same equivalence with a much weaker hypothesis.

\begin{proposition}
Let $R$ be a right semiartinian ring. Then, the following conditions are equivalent.
\begin{enumerate}
\item $R$ has no right $i$-middle class.
\item Every non-semisimple quasi-injective right $R$-module is injective.
\end{enumerate}
\end{proposition}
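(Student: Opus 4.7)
$(1) \Rightarrow (2)$: If $M$ is a non-semisimple quasi-injective module, then $M \in \idom^{-1}(M) \setminus \ssmod\text{-}R$, so $\idom^{-1}(M) \ne \ssmod\text{-}R$. By (1), $\idom^{-1}(M) = \Mod\text{-}R$, i.e., $M$ is injective.

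For $(2) \Rightarrow (1)$ I plan to argue by contrapositive, using the lattice-theoretic description of $i\mathcal{P}(R)$ from Theorem \ref{every}. Suppose $R$ has a middle $i$-class. By Theorem \ref{every}, $i\mathcal{P}(R)$ is isomorphic to the lattice of fully invariant submodules of a main injective module $\overline{E}$ that contain $\Soc(\overline{E})$; hence there exists a fully invariant submodule $L$ of $\overline{E}$ with $\Soc(\overline{E}) \subsetneq L \subsetneq \overline{E}$. The plan is to show that $L$ itself is a non-semisimple quasi-injective right $R$-module that is not injective, thereby contradicting (2).

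Three properties of $L$ need to be verified. First, $L$ is non-semisimple: since $\Soc(L) = L \cap \Soc(\overline{E}) = \Soc(\overline{E}) \subsetneq L$, the socle of $L$ is strictly contained in $L$. Second, $L$ is quasi-injective: the injective hull $E(L)$ can be embedded as a direct summand of $\overline{E}$, and any endomorphism $f$ of $E(L)$ extends (via injectivity of $\overline{E}$) to an endomorphism $\tilde f$ of $\overline{E}$; by full invariance of $L$ in $\overline{E}$, $\tilde f(L) \subseteq L$, so $f(L) \subseteq L$, showing $L$ is fully invariant in $E(L)$. Third, $L$ is not injective, and here the right semiartinian hypothesis is essential: because $R$ is right semiartinian, every nonzero submodule of $\overline{E}$ has nonzero socle, so $\Soc(\overline{E})$ is essential in $\overline{E}$. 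Since $L \supseteq \Soc(\overline{E})$, $L$ is itself essential in $\overline{E}$, and combined with $E(L)$ being a direct summand of $\overline{E}$ containing $L$ this forces $\overline{E} = E(L)$. Therefore $L \subsetneq \overline{E} = E(L)$, so $L$ is not injective.

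These three observations together produce a non-semisimple quasi-injective right $R$-module that is not injective, contradicting (2) and completing the contrapositive. The main obstacle is the third step, where the right semiartinian hypothesis is the key ingredient; without it $\Soc(\overline{E})$ need not be essential in $\overline{E}$ and the argument breaks down. The approach here differs from the more direct constructions used in proofs such as \cite[Proposition 8]{er} by leveraging the lattice-theoretic correspondence of Theorem \ref{every} to transport the problem to an intrinsic property of the main injective module $\overline{E}$, which behaves well under the weaker semiartinian assumption.
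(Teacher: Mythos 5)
Your proposal is correct and follows essentially the same route as the paper: both pass to a main injective module $\overline{E}$, take a fully invariant submodule strictly between $\Soc(\overline{E})$ and $\overline{E}$, use the semiartinian hypothesis to force $E(L)=\overline{E}$ (the paper kills the complementary summand by showing its socle vanishes, you show $\Soc(\overline{E})$ is essential --- the same fact), and conclude that $L$ is a non-semisimple, quasi-injective, non-injective module. The only difference is that you phrase $(2)\Rightarrow(1)$ contrapositively while the paper argues directly; the content is identical.
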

\begin{proof}
$(1) \Rightarrow (2)$ is clear, even in the case where $R$ is not right semiartinian. For $(2) \Rightarrow (1)$, let $\overline{E}$ be a main injective $R$-module, and let $M$ be a fully invariant submodule of $\overline{E}$ such that $\Soc(\overline{E}) < M$. Now the injective hull of $M$, $E(M)$ is a direct summand of $\overline{E}$, that is, there exists a module $E_2$ such that $\overline{E} = E(M) \oplus E_2$. Now, $\Soc(E_2) = \Soc(\overline{E})\cap E_2 \leq M\cap E_2 = 0$. Since $R$ is right semiartinian, $E_2 = 0$. Then, $\overline{E} = E(M)$. But now $M$ is fully invariant in its injective hull, so it is quasi-injective. But $M$ is not semisimple, which implies that $M$ is injective, so $M = \overline{E}$. Then, $\overline{E}$ has only two fully invariant submodules that contain $\Soc(\overline{E})$ (namely $\Soc(\overline{E})$ and $\overline{E}$ itself). This implies that $R$ has no right $i$-middle class.
\end{proof} \\

The preceding Proposition, together with Proposition \ref{art} has the following consequence.

\begin{corollary}
Let $R$ be a right artinian ring. Then, the following conditions are equivalent:
\begin{enumerate}
\item $R$ has no right $i$-middle class.
\item Every non-semisimple quasi-injective right $R$-module is injective.
\item $J(R)$ contains no nonzero two-sided ideals of $R$.
\end{enumerate}

Since condition 3 is left-right symmetric, we have that, for an artinian ring, every non-semisimple quasi-injective right $R$-module is injective if and only if every non-semisimple quasi-injective left $R$-module is injective.
\end{corollary}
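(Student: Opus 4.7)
The plan is to assemble this corollary directly from the two results immediately preceding it; essentially no new argument is required. First I would record that any right artinian ring is right semiartinian, since $R_R$ has finite length and so its ascending socle series stabilizes after finitely many steps. This lets me invoke the preceding Proposition to obtain the equivalence $(1) \Longleftrightarrow (2)$.

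Next, Corollary \ref{art} supplies $(1) \Longleftrightarrow (3)$ verbatim: it states that, for a right artinian ring, $R$ has no right $i$-middle class if and only if $J(R)$ contains no nonzero two-sided ideals. Chaining the two equivalences yields the desired three-way equivalence $(1) \Longleftrightarrow (2) \Longleftrightarrow (3)$, which is all that is needed for the first assertion.

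For the final left-right symmetry claim the point is simply that condition $(3)$ is manifestly side-independent: $J(R)$ is a two-sided ideal (and for an artinian ring coincides whether one computes it on the left or on the right), and the property ``$J(R)$ contains no nonzero two-sided ideals of $R$'' makes no reference to handedness. Under the hypothesis that $R$ is artinian (i.e.\ both left and right artinian), the three-way equivalence is available on each side, and condition $(3)$ appears identically in both lists. Hence the right-sided version of $(2)$ and the left-sided version of $(2)$ are each equivalent to the common statement $(3)$, and therefore to one another.

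I do not foresee any real obstacle, since this is a genuine corollary. The only items to be explicit about are: (a) that right artinian implies right semiartinian, so that the preceding Proposition is legitimately applicable; and (b) that in the symmetry statement ``artinian'' is read as two-sided artinian, so that both sided versions of Corollary \ref{art} and of the preceding Proposition are at our disposal.
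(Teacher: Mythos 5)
Your proposal is correct and follows exactly the route the paper intends: the paper presents this corollary as an immediate consequence of the preceding proposition (applicable since right artinian implies right semiartinian) together with Corollary \ref{art}, with the symmetry claim coming from the side-independence of condition (3). No discrepancy to report.
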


Note that we obtain that a (non-semisimple) right artinian ring with no right $i$-middle class has to have Loewy length 2. \\

Rings with linearly ordered $i$-profile are a natural generalization of rings with no right $i$-middle class. We start with a couple of easy propositions on the module structure of rings with linearly ordered $i$-profile.

\begin{proposition}\label{linear}
Let $R$ be a ring for which $i{\mathcal P}(R)$ is linearly ordered. Let $\overline{E}$ be a main injective module, and let $M$ be a fully invariant submodule of $\overline{E}$ such that $\Soc(\overline{E}) \leq M$. Then, $\Soc(\overline{E}/M)$ is either zero or homogeneous.
\end{proposition}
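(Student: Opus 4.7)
The plan is to work in the fully invariant submodule picture of $i\mathcal{P}(R)$: by Theorem \ref{every}, the linear ordering of $i\mathcal{P}(R)$ is equivalent to saying that the lattice of fully invariant submodules of $\overline{E}$ containing $\Soc(\overline{E})$ is a chain. So I would argue by contradiction: assume $\Soc(\overline{E}/M)$ is nonzero and non-homogeneous, pick two non-isomorphic simple submodules $S_1, S_2$ of $\overline{E}/M$, and produce two fully invariant submodules of $\overline{E}$ (lying above $M$, hence above $\Soc(\overline{E})$) that are incomparable.

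The natural candidates are the pullbacks $N_i = \pi^{-1}(T_i)$, where $\pi:\overline{E}\to\overline{E}/M$ is the projection and $T_i = \Tr_{S_i}(\overline{E}/M)$. The first key step is a routine check that each $N_i$ is fully invariant in $\overline{E}$: any $f\in\End(\overline{E})$ carries $M$ into $M$ because $M$ is fully invariant, hence induces $\bar{f}\in\End(\overline{E}/M)$, and traces are stable under all endomorphisms of the ambient module, so $\bar{f}(T_i)\subseteq T_i$ and thus $f(N_i)\subseteq N_i$. Clearly $M\subseteq N_i$, so both $N_1$ and $N_2$ sit in the chain given by our hypothesis; say $N_1\subseteq N_2$, whence $T_1\subseteq T_2$.

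The final step is to derive a contradiction from $T_1\subseteq T_2$. Since $T_2$ is a sum of copies of the simple module $S_2$, it is semisimple and homogeneous of type $S_2$, so every simple submodule of $T_2$ is isomorphic to $S_2$. But $T_1$ contains a copy of $S_1$, which would then be a simple submodule of $T_2$, forcing $S_1\cong S_2$ and contradicting our choice. Hence $\Soc(\overline{E}/M)$ must be zero or homogeneous.

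I do not anticipate a serious obstacle here; the only mildly delicate point is the first step, checking that passing traces of $\overline{E}/M$ back to $\overline{E}$ yields fully invariant submodules. This is subtle only in the sense that not every endomorphism of $\overline{E}/M$ comes from one of $\overline{E}$, but full invariance of the trace with respect to \emph{all} endomorphisms of $\overline{E}/M$ is exactly what makes the descent argument work with the induced $\bar f$ alone.
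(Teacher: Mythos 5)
Your proposal is correct and follows essentially the same route as the paper: the paper's (one-line) proof also extracts from a non-homogeneous $\Soc(\overline{E}/M)$ two incomparable fully invariant submodules of $\overline{E}$ above $M$, contradicting the chain condition on the lattice of item (4) in Theorem \ref{every}. You have merely made explicit the construction the paper leaves implicit, namely taking $N_i = \pi^{-1}(\Tr_{S_i}(\overline{E}/M))$ and checking full invariance and incomparability via the homogeneity of the traces.
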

\begin{proof}
Assume there exists a fully invariant submodule $\Soc(\overline{E}) \leq M \leq \overline{E}$  such that $\Soc(\overline{E}/M)$ is not homogeneous. This gives us two fully invariant submodules of $\overline{E}$ that contain $M$ which are not comparable. Then, $i{\cal P}(R)$ is not linearly ordered.
\end{proof}

\begin{proposition}\label{linear2}
Let $R$ be a ring such that $i{\mathcal P}(R)$ is linearly ordered, and let $I, J \leq J(R)$. Then, either $R/I \in \sigma[R/J]$ or $R/J \in \sigma[R/I]$.
\end{proposition}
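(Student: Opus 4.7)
The plan is to realize both $\sigma[R/I]$ and $\sigma[R/J]$ as elements of $i\mathcal{P}(R)$, after which linearity of the profile immediately forces one to contain the other, giving the desired conclusion.

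The first step is to check that $\sigma[R/I]$ and $\sigma[R/J]$ belong to the interval $[\ssmod\text{-}R,\Mod\text{-}R]$ in $\wis\text{-}R$. For this I would argue as follows: every simple right $R$-module is of the form $R/M$ for a maximal right ideal $M$, and since $J(R) \leq M$, it is a homomorphic image of $R/J(R)$. Because $I \leq J(R)$, $R/J(R)$ is itself a homomorphic image of $R/I$, hence every simple module lies in $\sigma[R/I]$; by closure under submodules and direct sums, $\ssmod\text{-}R \subseteq \sigma[R/I]$, and symmetrically $\ssmod\text{-}R \subseteq \sigma[R/J]$.

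With that in hand, Theorem \ref{every} applies to both Wisbauer classes: they are $i$-portfolios, so both are elements of $i\mathcal{P}(R)$. The assumption that $i\mathcal{P}(R)$ is linearly ordered then forces, without loss of generality, $\sigma[R/I] \subseteq \sigma[R/J]$. Since $R/I \in \sigma[R/I]$, this yields $R/I \in \sigma[R/J]$, and the symmetric case gives the other alternative. The argument is short and the only real content is the observation that having $I \leq J(R)$ is exactly what guarantees that $R/I$ subgenerates every simple module, which is the hypothesis required to place $\sigma[R/I]$ inside the injective profile via Theorem \ref{every}; there is no substantive obstacle beyond identifying this step.
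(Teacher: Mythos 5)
Your proof is correct and follows essentially the same route as the paper's: both arguments observe that $I, J \leq J(R)$ forces $\sigma[R/I]$ and $\sigma[R/J]$ to contain all semisimple modules, hence (via Theorem \ref{every}) to lie in the linearly ordered lattice $i\mathcal{P}(R)$, making them comparable. You have merely spelled out the details that the paper's two-line proof leaves implicit.
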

\begin{proof}
Since $I, J \leq J(R)$, both $\sigma[R/I]$ and $\sigma[R/J]$ contain every semisimple module. Then, either $\sigma[R/I] \subseteq \sigma[R/J]$ or viceversa. The result follows.
\end{proof} \\

In \cite{violapriori1}, the problem of characterizing those modules $M$ for which the lattice of Wisbauer classes in $\sigma[M]$ is linearly ordered is studied. Note that this condition is too restrictive for our study. For example, if $R$ is any semisimple artinian ring with two non-isomorphic simple modules $S_1$, $S_2$, then the lattice of Wisbauer classes in $\Mod\text{-}R$ is not linearly ordered, as $\sigma[S_1]$ and $\sigma[S_2]$ are not comparable, but clearly $i\mathcal{P}(R)$ is a chain. However, it is clear that every ring with linearly ordered lattice of Wisbauer classes has linearly ordered $i$-profile. The following is an immediate consequence of \cite[Proposition 2.1]{violapriori1}.

\begin{proposition}\label{uniserial}
Let $R$ be a right uniserial ring. Then, $i\mathcal{P}(R)$ is linearly ordered.
\end{proposition}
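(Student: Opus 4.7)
The plan is to reduce to one of the isomorphic lattices in Theorem \ref{every}, specifically the lattice of linear filters of right ideals of $R$ that contain every maximal right ideal. The goal is then to show that this lattice is itself linearly ordered whenever the right ideals of $R$ form a chain.

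So let $\mathfrak{F}$ and $\mathfrak{G}$ be two such linear filters, and suppose, for contradiction, that neither contains the other. Then I can pick $I \in \mathfrak{F} \setminus \mathfrak{G}$ and $J \in \mathfrak{G} \setminus \mathfrak{F}$. Since $R$ is right uniserial, the right ideals of $R$ form a chain under inclusion, so $I \leq J$ or $J \leq I$. In the first case, axiom F3 applied to $\mathfrak{F}$ (which contains $I$) forces $J \in \mathfrak{F}$, contradicting the choice of $J$. In the second case, the symmetric argument forces $I \in \mathfrak{G}$, a contradiction. Hence any two such filters are comparable, and transporting this back through the isomorphism of Theorem \ref{every} shows that $i\mathcal{P}(R)$ is linearly ordered.

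I expect no real obstacle here: once one invokes Theorem \ref{every} to move from injectivity domains (an \emph{a priori} opaque condition) to linear filters (a purely lattice-theoretic condition on subsets of the right ideal lattice), the uniserial hypothesis becomes the only thing that matters, and axiom F3 does the rest in a single line. This is consistent with the paper's remark that the proposition is an immediate consequence of \cite[Proposition 2.1]{violapriori1}, which can alternatively be invoked to assert that the whole lattice $\wis\text{-}R$ is a chain when $R$ is right uniserial, from which the sublattice $i\mathcal{P}(R)$ inherits linear order automatically.
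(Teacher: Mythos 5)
Your proof is correct, and it is genuinely more self-contained than what the paper offers: the paper gives no argument at all, simply declaring the proposition an immediate consequence of Proposition~2.1 of the Viola-Priori--Viola-Priori--Wisbauer paper on module categories with linearly ordered closed subcategories. Your route instead passes through the identification of $i\mathcal{P}(R)$ with an interval of $\fil\text{-}R$ and then observes that linear filters are upward-closed families of right ideals, so that when the right ideals form a chain any two filters are comparable by axiom F3 alone; this is a clean one-line lattice argument, and it in fact proves the stronger statement that \emph{all} of $\fil\text{-}R$ (equivalently all of $\wis\text{-}R$) is a chain, which is presumably the content of the cited external result. Note that for this stronger statement you only need Proposition~\ref{torsiontheory} (the isomorphism $\wis\text{-}R \cong \fil\text{-}R$); Theorem~\ref{every} is what you then need to conclude that $i\mathcal{P}(R)$ is the interval $[\ssmod\text{-}R, \Mod\text{-}R]$ and hence inherits the linear order. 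The one point you should make explicit is the meaning of \lq\lq right uniserial\rq\rq: your argument uses that the right ideals of $R$ are linearly ordered under inclusion (i.e., $R_R$ is a uniserial module, equivalently $R$ is a right chain ring), which is consistent with the paper's usage but is the hypothesis doing all the work. With that reading fixed, there is no gap.
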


If $R$ is a local ring, then every non-trivial (that is, different from $\{R\}$) linear filter of right ideals contains every maximal ideal. Thus, in this case, \cite[Proposition 2.2]{violapriori1} gives us a necessary and sufficient condition for $R$ to have linearly ordered $i$-profile.

\begin{proposition}\label{noethlocal}
Let $R$ be a local ring. Then, $i\mathcal{P}(R)$ is linearly ordered if and only if the lattice of ideals of $R$ is linearly ordered.
\end{proposition}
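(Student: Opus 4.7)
The plan is to reduce the statement to a direct application of Theorem \ref{every} combined with the cited [Proposition 2.2]{violapriori1}. In a local ring $R$ the unique maximal right ideal is $J(R)$. If $\mathfrak{F} \neq \{R\}$ is any linear filter of right ideals, then $\mathfrak{F}$ contains some proper right ideal $I$, and since $I \subseteq J(R)$, axiom F3 (upward closure) forces $J(R) \in \mathfrak{F}$. Hence the sublattice of linear filters containing every maximal right ideal, which by Theorem \ref{every} is isomorphic to $i\mathcal{P}(R)$, consists of the trivial filter $\{R\}$ together with all non-trivial linear filters of $R$.

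For the forward implication I would argue directly. Suppose $i\mathcal{P}(R)$ is linearly ordered, and take two-sided ideals $I, J \subseteq R$. If either equals $R$ there is nothing to show, so assume $I, J \subseteq J(R)$. Then the filters $\eta(I) = \{K : I \leq K\}$ and $\eta(J) = \{K : J \leq K\}$ both contain $J(R)$ and therefore lie in $i\mathcal{P}(R)$. The hypothesis forces them to be comparable, and since $\eta(I) \subseteq \eta(J)$ is equivalent to $J \subseteq I$ (the smallest member of $\eta(I)$ is $I$ itself), the ideals $I, J$ are comparable. Thus the lattice of two-sided ideals is linearly ordered.

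The reverse implication is where the external input is essential: the map $I \mapsto \eta(I)$ embeds the lattice of two-sided ideals into $\fil\text{-}R$, but not every linear filter need arise this way, so linear orderedness of the ideal lattice does not formally imply linear orderedness of $\fil\text{-}R$ without further input. This is precisely what [Proposition 2.2]{violapriori1} supplies, and since in a local ring the non-trivial linear filters coincide with those in $i\mathcal{P}(R)$, the cited result gives the converse.

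The main obstacle is the reverse direction, and more specifically the fact that linear filters are a strictly richer invariant than two-sided ideals in general; without the noetherian-style hypothesis used in Proposition \ref{artinian} to guarantee that every filter is of the form $\eta(I)$, one really does need the extra information from \cite{violapriori1}. Once that is in hand, the proof is essentially the two observations above glued together by Theorem \ref{every}.
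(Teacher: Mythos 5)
Your proposal is correct and takes essentially the same route as the paper: the paper offers no separate proof beyond the remark that in a local ring every non-trivial linear filter contains the unique maximal right ideal $J(R)$, so that by Theorem \ref{every} the profile is the full lattice of non-trivial linear filters with $\{R\}$ adjoined, after which it defers to \cite[Proposition 2.2]{violapriori1} exactly as you do. Your explicit argument for the forward implication via $\eta(I)\subseteq\eta(J)\Leftrightarrow J\subseteq I$ is a small self-contained elaboration of a step the paper leaves implicit, and your caveat that the reverse direction genuinely needs the cited result (since not every linear filter is of the form $\eta(I)$) is accurate.
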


Note that, by Proposition \ref{artinian}, if $R$ is a right artinian ring with linearly ordered profile, then $i\mathcal{P}(R)$ is finite, as it is of finite length. The next proposition shows that this is not true for arbitrary (even noetherian) rings.

\begin{proposition}\label{noetherianchain}
Let $R$ be a noetherian chain ring which is not artinian. Then, $i\mathcal{P}(R) \cong \omega + 2$. In particular, $i\mathcal{P}(R)$ is linearly ordered but not finite.
\end{proposition}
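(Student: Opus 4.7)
The plan is to use Theorem~\ref{every} to identify $i\mathcal{P}(R)$ with the lattice of linear filters of right ideals containing every maximal right ideal, and then enumerate those filters directly from the (very rigid) right ideal structure of $R$.

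Since $R$ is a right chain ring it is local; write $J = J(R)$. Because $R$ is right noetherian, $J$ is finitely generated, and since its generators are linearly ordered by the chain condition, $J$ is principal, say $J = aR$. A straightforward induction gives $J^n = a^n R$. The quotient $J^n/J^{n+1}$ is simple: any $x \in J^n = a^n R$ has the form $x = a^n r$, and either $r$ is a unit (so $J^n R = J^n$ is already generated by $x$) or $r \in J$ (so $x \in J^{n+1}$). Consequently no right ideal lies strictly between $J^{n+1}$ and $J^n$. Combining this with the non-artinian hypothesis and a Krull-type intersection argument appropriate to the noetherian chain setting, one concludes $\bigcap_{n \geq 0} J^n = 0$ and that the complete list of right ideals is
\[
R = J^0 \supsetneq J \supsetneq J^2 \supsetneq \cdots \supsetneq 0,
\]
with every inclusion strict and no ideals in between.

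Next I classify the linear filters $\mathfrak{F}$ of right ideals containing $J$. Because the right ideals form a chain, any upward closed subset automatically satisfies axioms F2 and F4: $I_1 \cap I_2$ is the smaller of the two and hence lies in $\mathfrak{F}$, while $(I:r) \supseteq I$ is in $\mathfrak{F}$ by upward closure. Hence the filters containing $J$ are precisely the non-empty upper sets of the above chain containing $J$, namely
\[
\mathfrak{F}_n = \{J^k : 0 \leq k \leq n\} \ (n \geq 1), \qquad \mathfrak{F}_\omega = \{J^k : k \geq 0\}, \qquad \mathfrak{F}_{\omega+1} = \mathfrak{F}_\omega \cup \{0\}.
\]
Under inclusion these form the chain $\mathfrak{F}_1 \subsetneq \mathfrak{F}_2 \subsetneq \cdots \subsetneq \mathfrak{F}_\omega \subsetneq \mathfrak{F}_{\omega+1}$, which is of order type $\omega + 2$. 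By Theorem~\ref{every}, $i\mathcal{P}(R)$ is lattice-isomorphic to this, proving the claim; in particular $i\mathcal{P}(R)$ is linearly ordered but has infinitely many elements.

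The main obstacle is the classification of right ideals in the possibly noncommutative chain setting, i.e.\ showing that every nonzero right ideal equals $J^n$ for some $n \geq 0$ and that the chain $J \supsetneq J^2 \supsetneq \cdots$ does not stabilize. The former is handled by the simplicity of $J^n/J^{n+1}$ above, and the latter follows from the fact that stabilization $J^n = J^{n+1}$ together with Nakayama's lemma would force $J^n = 0$, contradicting the non-artinian hypothesis (since then the finite chain $R \supsetneq J \supsetneq \cdots \supsetneq J^n = 0$ would make $R$ artinian). Once these structural facts are in hand, the rest is a direct enumeration.
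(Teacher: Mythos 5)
Your overall route is the same as the paper's: identify $i\mathcal{P}(R)$ with the linear filters of right ideals containing $J(R)$, observe that the right ideals form the chain $R > J > J^2 > \cdots$ (with zero intersection), and enumerate the filters as $\mathfrak{F}_1 < \mathfrak{F}_2 < \cdots < \mathfrak{F}_\omega < \mathfrak{F}_{\omega+1}$, giving order type $\omega+2$. That enumeration, which is the actual content of the proposition, is correct, and your observations that F2 is automatic in a chain and that the chain $J^n \supsetneq J^{n+1}$ is strict by Nakayama plus the non-artinian hypothesis are fine. The difference is that the paper simply cites \cite[Proposition 5.3]{facchini} for the structure of the ideal lattice, whereas you attempt to derive it from scratch.

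That derivation has one genuine gap: the claim $\bigcap_{n} J^n = 0$, which you dispatch with an unspecified ``Krull-type intersection argument.'' This is exactly the nontrivial structural input. Krull's intersection theorem is \emph{not} automatic for noncommutative noetherian rings, and your argument up to that point only shows that every right ideal is either some $J^n$ or is contained in $I := \bigcap_n J^n$; nothing yet rules out $I \neq 0$ (Nakayama applied to $I=bR$ gives $IJ \subsetneq I$, which is consistent with $I\neq 0$). One genuinely needs the chain hypothesis on \emph{both} sides (as in Facchini's result, which also yields that $R$ is duo) to close this. A second, smaller point: your verification of F4 uses $(I:r) \supseteq I$, which holds only because $I$ is two-sided; this is true here since all the ideals in your list are powers of the (two-sided) ideal $J$, but it is worth saying explicitly, since for a general right ideal in a noncommutative ring it would fail. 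With $\bigcap_n J^n = 0$ supplied -- either by citation or by a genuine proof -- the rest of your argument goes through and reproduces the paper's proof.
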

\begin{proof}
By \cite[Proposition 5.3]{facchini}, $R$ is a duo ring and the lattice of ideals of $R$ is $R > J(R) > J^2(R) > \dots$, where $J^{i}(R) \not= 0$ for all $i \in \mathbb{N}$ and $\bigcap_{i \in \mathbb{N}} J^{i}(R) = 0$. $i\mathcal{P}(R)$ is isomorphic to the lattice of linear filters of ideals $\mathfrak{F}$ such that $J(R) \in \mathfrak{F}$. Let $\mathfrak{F}$ be such a linear filter. If the set $ A_{\mathfrak{F}} = \{ n \in \mathbb{Z}^{+} : J^n(R) \in \mathfrak{F}\}$ has a maximum element $m$, then $\mathfrak{F} = \mathfrak{F}_m = \{J^k(R) : 0 \leq k \leq m\}$. If $A_{\mathfrak{F}}$ does not have a maximum element, then either $\mathfrak{F} = \mathfrak{F}_{\infty} = \{J^k(R) : k \in \mathbb{N}\}$ or $\mathfrak{F} = \mathfrak{A}$, the set of all ideals in $R$. Then, all linear filters that contain $J(R)$ are $\mathfrak{F}_1 < \mathfrak{F}_2 < \cdots < \mathfrak{F}_{\infty} < \mathfrak{A}$, so $i\mathcal{P}(R) \cong \omega + 2$. 
\end{proof}

In light of Proposition \ref{noetherianchain} it is worth mentioning that if $i\mathcal{P}(R)$ is an ordinal, it cannot be a limit ordinal, as $i\mathcal{P}(R)$ is coatomic. In fact, for the same reason we see that if $R$ is not semisimple artinian and $i\mathcal{P}(R)$ is an ordinal, then $i\mathcal{P}(R) \cong \alpha + 2$ for some ordinal $\alpha$. \\

The following proposition, together with its corollary, gives us a necessary condition for a ring to have a finite and linearly ordered profile. Note that, proceeding exactly as in the proof of \cite[Proposition 2]{er}, we get that if $\Gamma$ is a complete and irredundant set of representatives of cyclic right $R$-modules and if $M = \prod_{N \in \Gamma} N$, then $M$ is an $i$-poor module.

\begin{proposition}\label{cyclics}
Let $R$ be a non-semisimple artinian ring such that $i\mathcal{P}(R)$ is linearly ordered. Then, for every non-poor module $K$, there exists a cyclic module $C$ such that $\mathfrak{In}^{-1}(C) \subsetneq \mathfrak{In}^{-1}(K)$.
\end{proposition}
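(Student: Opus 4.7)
The plan is to argue by contradiction. Suppose there is a non-poor module $K$ such that no cyclic module $C$ satisfies $\mathfrak{In}^{-1}(C) \subsetneq \mathfrak{In}^{-1}(K)$. Because $i\mathcal{P}(R)$ is linearly ordered, any two portfolios are comparable, so the failure of strict containment forces $\mathfrak{In}^{-1}(C) \supseteq \mathfrak{In}^{-1}(K)$ for every cyclic $C$. This rephrasing, converting $\not\subsetneq$ into $\supseteq$ using the total order, is where the linearity hypothesis gets spent.

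The next step is to invoke the standard observation that a product $\prod_i M_i$ is $N$-injective if and only if each $M_i$ is, since by the universal property of the product, extending a map $K\to\prod_i M_i$ to $N$ amounts to extending each component $K\to M_i$ to $N$. Consequently $\mathfrak{In}^{-1}(\prod_i M_i) = \bigcap_i \mathfrak{In}^{-1}(M_i)$ for any family of modules. Let $\Gamma$ be a complete and irredundant set of representatives of cyclic right $R$-modules and set $M = \prod_{N \in \Gamma} N$. By the remark preceding the proposition (which runs the argument of \cite[Proposition 2]{er}), $M$ is $i$-poor, so
\[
\bigcap_{C \in \Gamma} \mathfrak{In}^{-1}(C) \;=\; \mathfrak{In}^{-1}(M) \;=\; \ssmod\text{-}R.
\]

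Putting the two observations together gives the chain $\mathfrak{In}^{-1}(K) \subseteq \bigcap_{C \in \Gamma} \mathfrak{In}^{-1}(C) = \ssmod\text{-}R$, which would make $K$ poor, contradicting the hypothesis and producing the required cyclic $C$. The argument is conceptually light; the only step that needs genuine care is justifying the identity $\mathfrak{In}^{-1}(\prod C) = \bigcap \mathfrak{In}^{-1}(C)$ and then correctly quoting the $i$-poorness of the product of all cyclics, since without those two ingredients the reduction to the contradiction cannot be closed. The artinian and non-semisimple hypotheses are used only implicitly (to ensure that $\mathfrak{In}^{-1}(K) \supsetneq \ssmod\text{-}R$ is a nontrivial condition and that the universe of cyclics is well behaved); the heart of the argument is the interplay between linearity of the profile and the universality of the product of all cyclics as an $i$-poor witness.
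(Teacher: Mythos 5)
Your proposal is correct and follows essentially the same route as the paper: use linearity to convert $\not\subsetneq$ into $\supseteq$, intersect over the complete irredundant set $\Gamma$ of cyclics, and invoke the $i$-poorness of $\prod_{N\in\Gamma}N$ to contradict the non-poorness of $K$. The only difference is that you make explicit the identity $\mathfrak{In}^{-1}\bigl(\prod_i M_i\bigr)=\bigcap_i\mathfrak{In}^{-1}(M_i)$, which the paper leaves implicit (it is the content of Lemma \ref{intersection}).
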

\begin{proof}
Let $M$ and $\Gamma$ be as in the preceding paragraph, so $M$ is an $i$-poor module. Let $K$ be a non-$i$-poor module and assume that for every $N \in \Gamma$, $\mathfrak{In}^{-1}(N) \not\subsetneq \mathfrak{In}^{-1}(K)$. Since $i{\cal P}(R)$ is linearly ordered, this implies that for every $N \in \Gamma$; $\mathfrak{In}^{-1}(K) \subseteq \mathfrak{In}^{-1}(N)$. Then, $\ssmod$-$R \subsetneq \mathfrak{In}^{-1}(K) \subseteq \bigcap_{N \in \Gamma}\mathfrak{In}^{-1}(N) = \mathfrak{In}^{-1}(M)$, a contradiction. Then, there exists $C \in \Gamma$ such that $\mathfrak{In}^{-1}(C) \subsetneq \mathfrak{In}^{-1}(K)$.
\end{proof}

\begin{corollary}\label{cyclicpoor}
Let $R$ be a ring such that $i\mathcal{P}(R)$ is linearly ordered and atomic (for example, if $i\mathcal{P}(R)$ is finite or if $R$ is as in Example \ref{noetherianchain}). Then, $R$ has a cyclic right module that is $i$-poor.
\end{corollary}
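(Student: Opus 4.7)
The plan is to combine atomicity with the general construction of an $i$-poor module as a product of cyclic modules, and then use the chain structure to force one of the cyclic factors to already be $i$-poor.

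First, I would observe that a linearly ordered atomic lattice with a least element has exactly one atom. Since $i\mathcal{P}(R)$ is a chain with bottom element $\ssmod\text{-}R$, any two atoms would be comparable, and two comparable atoms must coincide; atomicity then supplies at least one. Call this unique atom $\A$. So there is no portfolio strictly between $\ssmod\text{-}R$ and $\A$.

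Next, I would invoke the note preceding Proposition \ref{cyclics}, which is stated for an arbitrary ring: if $\Gamma$ is a complete and irredundant set of representatives of the isomorphism classes of cyclic right $R$-modules, then $M_{0} = \prod_{N \in \Gamma} N$ is $i$-poor. Since the injectivity domain of a product is the intersection of the injectivity domains of its factors, this yields
\[
\bigcap_{N \in \Gamma}\mathfrak{In}^{-1}(N) \;=\; \mathfrak{In}^{-1}(M_{0}) \;=\; \ssmod\text{-}R.
\]

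The key step is a pigeonhole argument inside the chain: each $\mathfrak{In}^{-1}(N)$ belongs to $i\mathcal{P}(R)$, hence is comparable with $\A$, so either $\mathfrak{In}^{-1}(N) = \ssmod\text{-}R$ or $\mathfrak{In}^{-1}(N) \supseteq \A$. If the second alternative held for every $N \in \Gamma$, then the intersection above would contain $\A$, contradicting the displayed equality. Consequently some $N \in \Gamma$ satisfies $\mathfrak{In}^{-1}(N) = \ssmod\text{-}R$, and this $N$ is a cyclic $i$-poor module, as required.

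The only real obstacle is ensuring that both ingredients remain available beyond the artinian setting of Proposition \ref{cyclics}, since the corollary explicitly includes the noetherian chain ring of Example \ref{noetherianchain}. The $i$-poorness of $M_{0}$ is precisely the content of the general note preceding Proposition \ref{cyclics}, and the equality $\mathfrak{In}^{-1}(\prod N) = \bigcap \mathfrak{In}^{-1}(N)$ is a standard property of injectivity domains; everything else is just reading off the linear-order portion of the proof of Proposition \ref{cyclics}, now applied directly at the atom $\A$.
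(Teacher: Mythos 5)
Your proof is correct and is essentially the paper's argument: the paper's proof consists of applying Proposition \ref{cyclics} with $K$ a module whose injectivity domain is an atom of $i\mathcal{P}(R)$, and you have simply unfolded that proof, running the same pigeonhole argument over $\Gamma$ directly at the (unique) atom. Your inlined version has the minor merit of making explicit that only the note preceding Proposition \ref{cyclics} (valid for arbitrary rings) and the chain condition are needed, so the ``non-semisimple artinian'' hypothesis in that proposition's statement causes no difficulty for the non-artinian rings, such as the one in Proposition \ref{noetherianchain}, to which the corollary is meant to apply.
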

\begin{proof}
In Proposition \ref{cyclics}, put $K$ as a module such that $\mathfrak{In}^{-1}(K)$ is an atom of ${\cal P}(R)$.
\end{proof} \\

Note that for a right artinian ring, Corollary \ref{cyclicpoor} does not yield anything new, as it is shown in \cite[Theorem 3.3]{alahmadi} that for a right artinian ring it is always the case that the cyclic module $R/J(R)$ is $i$-poor. However, the next result tells us that any right artinian ring with linearly ordered $i$-profile necessarily has a simple $i$-poor module. Note that not every ring has a cyclic $i$-poor module. For example, the ring of integers $\mathbb{Z}$ does not have one.

\begin{proposition}\label{simpleipoor}
Let $R$ be a right artinian ring such that $i\mathcal{P}(R)$ is linearly ordered. Then, $R$ has a simple $i$-poor module.
\end{proposition}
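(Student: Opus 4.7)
The plan is to combine two facts: the direct sum formula $\mathfrak{In}^{-1}(\bigoplus_\alpha M_\alpha) = \bigcap_\alpha \mathfrak{In}^{-1}(M_\alpha)$ for injectivity domains (which is immediate from the definition of relative injectivity, since a direct sum is $N$-injective iff each summand is), together with the fact—already cited in the paragraph preceding the statement—that $R/J(R)$ is $i$-poor for every right artinian ring $R$ (\cite[Theorem 3.3]{alahmadi}).

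Since $R$ is right artinian, there are only finitely many isomorphism classes of simple right $R$-modules, say $S_{1},\dots,S_{n}$, and the semisimple module $R/J(R)$ decomposes as $\bigoplus_{i=1}^{n} S_{i}^{k_{i}}$ for suitable positive integers $k_{i}$. Applying the direct sum identity, I would obtain
$$\mathfrak{In}^{-1}(R/J(R)) \;=\; \bigcap_{i=1}^{n} \mathfrak{In}^{-1}(S_{i}^{k_{i}}) \;=\; \bigcap_{i=1}^{n} \mathfrak{In}^{-1}(S_{i}).$$
By hypothesis $i\mathcal{P}(R)$ is linearly ordered, so among the finitely many portfolios $\mathfrak{In}^{-1}(S_{1}),\dots,\mathfrak{In}^{-1}(S_{n})$ there is a smallest one, say $\mathfrak{In}^{-1}(S_{j})$, and the intersection coincides with it. Since $R/J(R)$ is $i$-poor, the left-hand side equals $\ssmod\text{-}R$, and hence $\mathfrak{In}^{-1}(S_{j}) = \ssmod\text{-}R$, exhibiting $S_{j}$ as a simple $i$-poor module.

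There is essentially no obstacle here; the entire argument rests on locating where the linear-order hypothesis is used, namely in picking out a smallest element of the finite collection $\{\mathfrak{In}^{-1}(S_{i})\}$. Without that hypothesis the intersection could strictly refine each individual $\mathfrak{In}^{-1}(S_{i})$ and the conclusion would fail—indeed, this is exactly what happens for a semisimple ring with two non-isomorphic simple modules, consistent with the remark made just before Proposition \ref{uniserial}.
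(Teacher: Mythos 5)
Your argument is correct and is essentially the paper's own proof: both use that $R/J(R)$ is a semisimple $i$-poor module, decompose it into simples, write its injectivity domain as the finite intersection $\bigcap_i \mathfrak{In}^{-1}(S_i)$, and invoke the linear ordering to conclude that this intersection equals the smallest of the $\mathfrak{In}^{-1}(S_i)$. One small caution: your parenthetical claim that $\mathfrak{In}^{-1}(\bigoplus_\alpha M_\alpha)=\bigcap_\alpha\mathfrak{In}^{-1}(M_\alpha)$ for \emph{arbitrary} direct sums is not valid in general (arbitrary direct sums of $N$-injective modules need not be $N$-injective), but only the finite case is needed here, so the proof stands.
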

\begin{proof}
Since $R$ is right artinian, $R/J(R)$ is a semisimple $i$-poor module. Say $R/J(R) = \bigoplus_{i = 1}^{k}S_i$. Note that in this case $\mathfrak{In}^{-1}(\bigoplus_{i=1}^{k}S_i) = \bigcap_{i = 1}^{k} \mathfrak{In}^{-1}(S_i)$. Now proceed as in Proposition \ref{cyclics} and Corollary \ref{cyclicpoor}.
\end{proof}\\

Recall that for two preradicals $\tau$ and $\lambda$, the preradical $(\tau:\lambda)$ is defined in such a way that $(\tau:\lambda)M/\tau(M) = \lambda(M/\tau(M))$, and that a preradical is called a radical if $(\tau:\tau) = \tau$ (or, equivalently, if $\tau(M/\tau(M)) = 0$ for all $M \in \Mod$-$R$). If $\tau$ and $\lambda$ are supposed to be left exact preradicals, then $(\tau:\lambda)$ is left exact \cite[Exercise VI.1]{stenstrom}. From the definition, it is clear that $\tau, \lambda \leq (\tau:\lambda)$ for any preradicals. Using these observations, we can prove the following result.

\begin{proposition}\label{loewy}
Let $R$ be a ring with no right $i$-middle class. Then, eiter $\Soc$ is a radical or $R$ is a right semiartinian ring with Loewy length 2.
\end{proposition}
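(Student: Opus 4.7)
The plan is to translate the hypothesis into the language of left exact preradicals via Theorem \ref{every} and then look at the single natural preradical that decides which alternative occurs, namely $(\Soc:\Soc)$.

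First I would unpack what ``$R$ has no right $i$-middle class'' means in lattice terms. By Theorem \ref{every}, $i\mathcal{P}(R)$ is isomorphic to the lattice of left exact preradicals $\tau$ with $\Soc \leq \tau$. Saying $R$ has no right $i$-middle class means $|i\mathcal{P}(R)| \leq 2$, so the only left exact preradicals $\tau$ with $\Soc \leq \tau$ are $\Soc$ and the identity functor $\mathrm{Id}$.

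Next, I would invoke the observation recalled just before the proposition: since $\Soc$ is itself a left exact preradical, $(\Soc:\Soc)$ is again a left exact preradical, and by definition $\Soc \leq (\Soc:\Soc)$. Hence the dichotomy above forces $(\Soc:\Soc) = \Soc$ or $(\Soc:\Soc) = \mathrm{Id}$.

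In the first case, by the very definition of a radical, $\Soc$ is a radical, giving the first alternative of the conclusion. In the second case, for every module $M$ we have $(\Soc:\Soc)(M) = M$, which, unwinding the definition of $(\Soc:\Soc)$, says $\Soc(M/\Soc(M)) = M/\Soc(M)$; equivalently, $M/\Soc(M)$ is semisimple for every $M$. This immediately yields that $R$ is right semiartinian with Loewy length at most $2$, and the Loewy length is exactly $2$ unless $R$ is already semisimple (in which case $\Soc = \mathrm{Id}$ is vacuously a radical and we fall into the first alternative anyway). I do not foresee any serious obstacle; the only subtlety is recognizing that the hypothesis restricts the entire lattice of left-exact-preradicals-above-$\Soc$ to just two points, which is exactly what makes the cheap test preradical $(\Soc:\Soc)$ do the job.
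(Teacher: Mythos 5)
Your proposal is correct and follows essentially the same route as the paper: both arguments hinge on the fact that $(\Soc:\Soc)$ is a left exact preradical lying above $\Soc$, so the absence of a middle class forces $(\Soc:\Soc)=\Soc$ (i.e.\ $\Soc$ is a radical) or $(\Soc:\Soc)=1_{\Mod\text{-}R}$ (i.e.\ $R$ is right semiartinian of Loewy length $2$). The only difference is cosmetic: the paper phrases it as a direct implication starting from ``$\Soc$ is not a radical'' and evaluates $(\Soc:\Soc)$ at $R$, while you state the dichotomy symmetrically and also handle the semisimple edge case explicitly.
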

\begin{proof}
Assume $\Soc$ is not a radical. Then, $(\Soc:\Soc)$ is a left exact preradical which is strictly greater than $\Soc$. Since $R$ has no right middle class, we must have $(\Soc:\Soc) = 1_{\Mod\text{-}R}$. Then, $(\Soc:\Soc)(R) = R$ and therefore $R$ is a right semiartinian ring with Loewy length 2.
\end{proof} \\

Note that both possibilities in Proposition \ref{loewy} can happen. For example, if $R$ is Cozzen's ring of differential polynomials in \cite{cozzens} then $R$ is a noetherian ring with no right $i$-middle class which is not right artinian, and in this case $\Soc$ is a radical, as the socle of any right module $M$ splits in $M$. Also note that the requirement of $\Soc$ being a radical is equivalent to the requirement that $\ext^{1}(S, T) = 0$ for any semisimple modules $S$ and $T$.

\section{The projective profile of a ring.}

\begin{definition}
Let ${\cal A}$ be a class of modules. We say that ${\cal A}$ is a $p$-portfolio if there exists a module $M$ such that ${\cal A} = \pdom^{-1}(M)$. The class $\{{\cal A} \subseteq \Mod\text{-}R : {\cal A} \; \text{is a} \; p\text{-portfolio}\}$ is called the right $p$-profile of $R$ and we denote it by $p{\cal P}_{r}(R)$. Similarly, we define the left $p$-profile of $R$ and denote it by $p{\cal P}_{\ell}(R)$. When there's no confusion, we denote $p{\cal P}_{r}(R)$ just by $p{\cal P}(R)$.
\end{definition} 

In general, the $i$-profile of a ring better behaved than its $p$-profile. For instance, we have seen that the $i$-profile of any ring is a set, but this need not be true for the $p$-profile, as we will see next. We call a module $M$ a test-module for projectivity if any module $N$ is projective whenever it is $M$-projective. While the famous Baer criterion tells us that the ring $R$ is always a test-module for injectivity, test-modules for projectivity need not exist. Even in the category of $\mathbb{Z}$-modules, the existence of a test-module for projectivity is equivalent to the existence of a Whitehead group, and this problem has been shown to be undecidable in ZFC (\cite{shelah}). The existence of test-modules for projectivity is important in this problem because of the following proposition.

\begin{theorem}\label{notset}
Let $R$ be a ring. If $p{\cal P}(R)$ is a set, then there exists a test-module for projectivity.
\end{theorem}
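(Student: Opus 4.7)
The plan is to reformulate what a test-module really is and then construct one by pigeonholing all proper $p$-portfolios at once. Observe that $M$ is a test-module for projectivity precisely when $M$ lies in no proper projectivity domain: for if $N$ is $M$-projective then $M\in\pdom^{-1}(N)$, and we want to force $\pdom^{-1}(N)=\Mod\text{-}R$. Thus the task reduces to producing a single module $M$ such that $M\notin\mathcal{A}$ for every $\mathcal{A}\in p\mathcal{P}(R)\setminus\{\Mod\text{-}R\}$.

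Under the hypothesis that $p\mathcal{P}(R)$ is a set, the subcollection $p\mathcal{P}(R)\setminus\{\Mod\text{-}R\}$ is a set as well. For each $\mathcal{A}$ in this set, pick (via choice) a module $K_\mathcal{A}$ with $K_\mathcal{A}\notin\mathcal{A}$, which is possible because $\mathcal{A}\subsetneq\Mod\text{-}R$. Then set
\[
M=\bigoplus_{\mathcal{A}\in p\mathcal{P}(R)\setminus\{\Mod\text{-}R\}}K_\mathcal{A},
\]
a legitimate $R$-module since the index is a set.

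To see that $M$ avoids every proper $p$-portfolio, the key input is that projectivity domains are closed under submodules (\cite[Proposition 16.12]{anderson}), as noted in the introduction. For any proper $\mathcal{A}\in p\mathcal{P}(R)$, the summand $K_\mathcal{A}$ embeds in $M$, so if $M\in\mathcal{A}$ submodule-closure would force $K_\mathcal{A}\in\mathcal{A}$, contradicting the choice of $K_\mathcal{A}$. Hence $M\notin\mathcal{A}$ for every such $\mathcal{A}$.

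Finally, suppose $N$ is $M$-projective. Then $M\in\pdom^{-1}(N)$, so $\pdom^{-1}(N)$ cannot be a proper $p$-portfolio by the previous step; therefore $\pdom^{-1}(N)=\Mod\text{-}R$, i.e., $N$ is projective. Thus $M$ is a test-module for projectivity. The only delicate point is the set-theoretic one of selecting representatives $K_\mathcal{A}$ from each proper portfolio class simultaneously, which is handled by global choice; after that, the argument is essentially a direct-sum trick exploiting submodule closure.
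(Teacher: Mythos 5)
Your proof is correct, but it takes a genuinely different route from the paper's. You argue directly: after the (correct) observation that a test-module for projectivity is exactly a module belonging to no proper $p$-portfolio, you choose for each proper $\mathcal{A}\in p\mathcal{P}(R)$ a witness $K_{\mathcal{A}}\notin\mathcal{A}$ and form $M=\bigoplus_{\mathcal{A}}K_{\mathcal{A}}$ over the set of proper portfolios; since projectivity domains are closed under submodules, $M\in\mathcal{A}$ would force $K_{\mathcal{A}}\in\mathcal{A}$, so $M$ lies in no proper portfolio and is therefore a test-module. The paper instead proves the contrapositive: assuming no test-module exists, it builds by transfinite recursion a strictly increasing family of cardinals $\lambda(\beta)$ together with modules $M(\beta)$ that are $R^{(\lambda(\beta))}$-projective but not projective, and checks that $\beta\mapsto\pdom^{-1}(M(\beta))$ is injective on the class of ordinals, so that $p\mathcal{P}(R)$ is a proper class. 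Your argument is shorter and dispenses with the recursion; the paper's version yields the additional information that, in the absence of a test-module, the free modules $R^{(\lambda)}$ alone already separate a proper class of distinct portfolios. The one point to be careful about in your version is the simultaneous selection of the $K_{\mathcal{A}}$ from the (possibly proper-class) complements of the $\mathcal{A}$: this does not actually need global choice, since Scott's trick (take witnesses of least von Neumann rank, then apply ordinary AC to the resulting set-indexed family of nonempty sets) carries it out inside ZFC.
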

\begin{proof}
We show that if there is no test-module for projectivity, then $p{\cal P}(R)$ is a proper class. For brevity, if $\lambda$ is a cardinal we say that a module $M$ is $\lambda$-projective if it is $R^{(\lambda)}$-projective. We construct an injective relation $f: \text{ORD} \rightarrow p{\cal P}(R)$ recursively, as follows. Since $R$ is not a test-module for projectivity, let $M(0)$ be a module which is $R$-projective but not projective, and define $f(0) = \pdom^{-1}(M(0))$. Now let $\beta$ be an ordinal:

\begin{enumerate}
\item If $\beta = \alpha + 1$ and $f(\alpha)$ has already been defined as $\pdom^{-1}(M(\alpha))$, where $M(\alpha)$ is a module which is $\lambda(\alpha)$-projective but not projective for some cardinal $\lambda(\alpha)$, let $\lambda(\beta)$ be the least cardinal such that $M(\alpha)$ is not $\lambda(\beta)$-projective. Let $M(\beta)$ be a module which is $\lambda(\beta)$-projective but not projective, and define $f(\beta) = \pdom^{-1}(M(\beta))$.

\item If $\beta$ is a limit ordinal and $f(\gamma)$ has been defined for all $\gamma < \beta$ as $\pdom^{-1}(M(\gamma))$, where $M(\gamma)$ is a module which is $\lambda(\gamma)$-projective but not projective. Then, for every $\gamma < \beta$ there exists a cardinal $\kappa(\gamma)$ such that $M(\gamma)$ is not $\kappa(\gamma)$-projective. Let $\lambda(\beta) = \bigcup_{\lambda<\beta}\kappa(\gamma)$ and let $M(\beta)$ me a module which is $\lambda(\beta)$-projective but not projective. Define $f(\beta) = \pdom^{-1}(M(\beta))$.
\end{enumerate}

Then, $f$ defines an injective function from the class of ordinals to $p{\cal P}(R)$. We conclude that $p{\cal P}(R)$ is a proper class.
\end{proof}

\begin{corollary}
There exists a model of ZFC in which the $p$-profile of $\mathbb{Z}$ is not a set.
\end{corollary}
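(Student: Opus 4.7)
The plan is to apply the contrapositive of Theorem \ref{notset}: it suffices to exhibit a model of ZFC in which no test-module for projectivity over $\mathbb{Z}$ exists. My strategy is to recast this condition in terms of a classical set-theoretic independence.

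First I identify the $\mathbb{Z}$-projective modules. A $\mathbb{Z}$-module $N$ is $\mathbb{Z}$-projective iff every map $N \to \mathbb{Z}/n\mathbb{Z}$ lifts to a map $N \to \mathbb{Z}$; the long exact sequence in $\ext$ applied to $0 \to n\mathbb{Z} \to \mathbb{Z} \to \mathbb{Z}/n\mathbb{Z} \to 0$ shows this is equivalent to $\ext^{1}(N, n\mathbb{Z}) = 0$ for every $n$, and since $n\mathbb{Z} \cong \mathbb{Z}$, this is precisely $\ext^{1}(N, \mathbb{Z}) = 0$, that is, $N$ is a Whitehead group. In particular, $\mathbb{Z}$ itself is a test-module for projectivity if and only if every Whitehead group is free, which is exactly the Whitehead conjecture. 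As mentioned in the paragraph preceding Theorem \ref{notset}, the existence of any test-module for projectivity over $\mathbb{Z}$ is equivalent to the Whitehead conjecture.

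Finally, I invoke Shelah's independence theorem \cite{shelah}: the Whitehead conjecture is undecidable in ZFC, and in particular fails in any model of Martin's Axiom together with $\neg\mathrm{CH}$. Working inside such a model, no test-module for projectivity over $\mathbb{Z}$ exists, and so by Theorem \ref{notset}, $p\mathcal{P}(\mathbb{Z})$ is not a set. The main obstacle I anticipate is the verification of the equivalence asserted above in full generality, namely that in models with non-free Whitehead groups, every candidate test-module $M$ fails. Via the inductive construction in the proof of Theorem \ref{notset} this reduces to exhibiting, for each cardinal $\lambda$, a non-projective $\mathbb{Z}^{(\lambda)}$-projective abelian group; I expect this to be arrangeable by taking Shelah's non-free Whitehead groups of sufficiently large cardinality so that $\ext^{1}$ into $\mathbb{Z}^{(\lambda)}$ also vanishes on them.
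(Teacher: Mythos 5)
Your overall strategy is the paper's: the corollary is meant to follow from Theorem \ref{notset} together with the remark, credited to \cite{shelah}, that the existence of a test-module for projectivity over $\mathbb{Z}$ is tied to the Whitehead problem. However, the step where you make that tie precise is wrong. From the exact sequence $0 \to n\mathbb{Z} \to \mathbb{Z} \to \mathbb{Z}/n\mathbb{Z} \to 0$ one gets
$\Hom(N,\mathbb{Z}) \to \Hom(N,\mathbb{Z}/n\mathbb{Z}) \to \ext^{1}(N,n\mathbb{Z}) \to \ext^{1}(N,\mathbb{Z})$,
so the lifting condition is equivalent to the \emph{injectivity} of $\ext^{1}(N,n\mathbb{Z}) \to \ext^{1}(N,\mathbb{Z})$ (i.e., multiplication by $n$ is injective on $\ext^{1}(N,\mathbb{Z})$), not to the vanishing of $\ext^{1}(N,n\mathbb{Z})$. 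Thus $N$ is $\mathbb{Z}$-projective iff $\ext^{1}(N,\mathbb{Z})$ is torsion-free, which is strictly weaker than being a Whitehead group: $\mathbb{Q}$ has $\Hom(\mathbb{Q},\mathbb{Z}/n\mathbb{Z})=0$, hence is $\mathbb{Z}$-projective, yet $\ext^{1}(\mathbb{Q},\mathbb{Z})\neq 0$ and $\mathbb{Q}$ is not projective. In particular your assertion that ``$\mathbb{Z}$ itself is a test-module for projectivity if and only if every Whitehead group is free'' fails outright: $\mathbb{Z}$ is never a test-module over $\mathbb{Z}$, in any model of ZFC, because of $\mathbb{Q}$. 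The question that actually matters is whether some free module $\mathbb{Z}^{(\lambda)}$ of large rank is a test-module (this suffices for all modules, since every module is a quotient of a free one and projectivity domains are closed under quotients), and $\mathbb{Q}$ is eliminated already by $\mathbb{Z}^{(\omega)}$.

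Consequently the burden of the proof sits entirely on your last paragraph, which you only sketch: in a model of MA $+\ \neg$CH you must produce, for every cardinal $\lambda$, a non-free abelian group that is $\mathbb{Z}^{(\lambda)}$-projective. Since subgroups of $\mathbb{Z}^{(\lambda)}$ are free of rank $\le\lambda$, what is needed is (for instance) a non-free group $A$ with $\ext^{1}(A,F)=0$ for every free $F$ of rank $\le\lambda$; this is genuinely stronger than $\ext^{1}(A,\mathbb{Z})=0$, because $\ext^{1}(-,-)$ does not automatically commute with infinite direct sums in the second variable, so it requires its own argument or citation (and one must also pass to groups of more than $\lambda$ generators, since a $\le\lambda$-generated $\mathbb{Z}^{(\lambda)}$-projective group is automatically free). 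To be fair, the paper itself leaves this point at the level of the remark preceding Theorem \ref{notset}; but your write-up replaces that citation with an explicit equivalence that is false as stated, so the reduction to Shelah's theorem is not established.
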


In light of Theorem \ref{notset}, we cannot use a similar argument to that of Lemma \ref{intersection} to show that $p\mathcal{P}(R)$ is closed under arbitrary intersections, so, contrary to the injective case, we only know that $p\mathcal{P}(R)$ is a semilattice with first and last element. A few results do echo their injective counterparts. The following two results are proved exactly like in the injective case, so we omit their proofs.

\begin{proposition}
Let $R$ and $S$ be rings. Then, $p\mathcal{P}(R \times S) \cong p\mathcal{P}(R) \times p\mathcal{P}(S)$. In particular, if $S$ is semisimple artinian then $p\mathcal{P}(R \times S) \cong p\mathcal{P}(R)$.
\end{proposition}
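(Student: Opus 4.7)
The plan is to mirror, essentially verbatim, the proof of Proposition \ref{product2}, exploiting the well-known equivalence of categories $\Mod\text{-}(R\times S) \cong \Mod\text{-}R \times \Mod\text{-}S$. The key observation is that projectivity in a product category reduces componentwise to projectivity in each factor.

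First, I would recall that every right $(R\times S)$-module $M$ decomposes canonically as $M = M_R \oplus M_S$ via the central orthogonal idempotents $e_1 = (1,0)$ and $e_2 = (0,1)$, where $M_R = Me_1$ is a right $R$-module and $M_S = Me_2$ is a right $S$-module. Morphisms respect this decomposition: any homomorphism $M \to N$ of $(R\times S)$-modules splits as a pair $(f_R, f_S)$ with $f_R \colon M_R \to N_R$ an $R$-homomorphism and $f_S \colon M_S \to N_S$ an $S$-homomorphism. In particular, $g \colon N \to K$ is an epimorphism in $\Mod\text{-}(R\times S)$ if and only if both $g_R \colon N_R \to K_R$ and $g_S \colon N_S \to K_S$ are epimorphisms in their respective categories.

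Next I would verify the central computational claim: for any right $(R\times S)$-modules $M$ and $N$,
\[
M \text{ is } N\text{-projective} \iff M_R \text{ is } N_R\text{-projective and } M_S \text{ is } N_S\text{-projective}.
\]
The forward implication is immediate by restricting lifting problems along the inclusions $M_R \hookrightarrow M$, etc., and using that every $R$-module is naturally an $(R\times S)$-module concentrated in the first coordinate. For the reverse implication, given an epimorphism $g = (g_R, g_S) \colon N \twoheadrightarrow K$ and a morphism $\psi = (\psi_R, \psi_S) \colon M \to K$, one lifts componentwise using the projectivity hypothesis in each coordinate and assembles the result. This shows $\pdom^{-1}(M) = \pdom^{-1}(M_R) \times \pdom^{-1}(M_S)$, where on the right we identify $R$-modules and $S$-modules with their natural images in $\Mod\text{-}(R\times S)$.

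From this equality the lattice-theoretic isomorphism $p\mathcal{P}(R \times S) \cong p\mathcal{P}(R) \times p\mathcal{P}(S)$ follows formally: the map sending $\pdom^{-1}(M)$ to $(\pdom^{-1}(M_R), \pdom^{-1}(M_S))$ is well-defined, and it is surjective because given $R$-module $A$ and $S$-module $B$ the $(R\times S)$-module $A \oplus B$ realizes the pair. It preserves inclusion in both directions. For the second claim, when $S$ is semisimple artinian, every $S$-module is projective so $p\mathcal{P}(S)$ is a singleton $\{\Mod\text{-}S\}$, and the product collapses to $p\mathcal{P}(R)$. The only step requiring any care is the componentwise lifting argument above; however, since $\Mod\text{-}(R\times S)$ decomposes as a product category with no interaction between the two factors, this presents no real obstacle and the proof goes through exactly as in the injective case.
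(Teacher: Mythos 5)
Your proposal is correct and matches the paper's intent exactly: the paper omits the proof, stating only that it is ``proved exactly like in the injective case,'' and your argument is precisely that componentwise decomposition via the central idempotents $(1,0)$ and $(0,1)$, mirroring Proposition~2.3. The verification that relative projectivity splits across the two coordinates and the collapse of $p\mathcal{P}(S)$ to a singleton when $S$ is semisimple artinian are both handled correctly.
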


Note that, although ring decompositions induce decompositions in the injective and projective profile, the converse is not true, as Example \ref{quiver} shows.

\begin{proposition}
Let $R$ and $S$ be Morita-equivalent rings. Then, $p\mathcal{P}(R) \cong p\mathcal{P}(S)$.
\end{proposition}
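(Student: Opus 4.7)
The plan is to exploit that relative projectivity is preserved and reflected by any equivalence of categories, and thus transports along a Morita equivalence. Fix a category equivalence $F: \Mod\text{-}R \to \Mod\text{-}S$ with quasi-inverse $G: \Mod\text{-}S \to \Mod\text{-}R$ realising the Morita equivalence.

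The first step is the bijection on relative projectivity: for all $R$-modules $M$ and $N$, the module $M$ is $N$-projective if and only if $F(M)$ is $F(N)$-projective. This is a routine diagram chase: an epimorphism $g: F(N) \to L$ and a morphism $\psi: F(M) \to L$ in $\Mod\text{-}S$ transport under $G$ to a lifting problem in $\Mod\text{-}R$ (using that $G$ preserves and reflects epimorphisms and that the unit of the equivalence is a natural isomorphism), which is solvable by hypothesis; applying $F$ to the resulting lift and chasing the naturality squares back produces a lift $F(M) \to F(N)$ of $\psi$ through $g$. The converse direction is symmetric, with the roles of $F$ and $G$ interchanged.

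With this in hand I would define $\Phi: p\mathcal{P}(R) \to p\mathcal{P}(S)$ by $\Phi(\pdom^{-1}(M)) = \pdom^{-1}(F(M))$. The preceding step, combined with the essential surjectivity of $F$, implies that an $S$-module $L$ lies in $\pdom^{-1}(F(M))$ if and only if $G(L)$ lies in $\pdom^{-1}(M)$, so $\pdom^{-1}(F(M))$ is determined entirely by $\pdom^{-1}(M)$; hence $\Phi$ depends only on the portfolio, not on the representative $M$. The analogous map $\Psi: p\mathcal{P}(S) \to p\mathcal{P}(R)$ built from $G$ is a two-sided inverse. Both $\Phi$ and $\Psi$ manifestly preserve containment and therefore any binary meets and joins that happen to exist. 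Since Theorem \ref{notset} warns that $p\mathcal{P}(R)$ need not even be a set in general, the conclusion is most honestly phrased as an isomorphism of the semilattice-with-first-and-last-element structure, which is all the paper has attributed to $p\mathcal{P}$.

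The only subtle point is the well-definedness of $\Phi$: two non-isomorphic $R$-modules can share a $p$-portfolio, and one must confirm that their images under $F$ share a $p$-portfolio over $S$. As noted, this falls out immediately from the categorical characterization of portfolio membership given in the first step. Everything else is a straight transport of structure along an equivalence, which is presumably why the authors cite the injective analogue as the model and omit the full argument.
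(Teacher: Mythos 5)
Your argument is correct and is exactly the paper's (omitted) argument: the authors state that this result "is proved exactly like in the injective case" and later, in the proof of Proposition \ref{morita}, make explicit precisely the fact you establish, namely that an equivalence $\Phi$ satisfies $A \in \pdom^{-1}(M)$ if and only if $\Phi(A) \in \pdom^{-1}(\Phi(M))$, from which the induced correspondence of profiles follows. Your additional remarks on well-definedness and on reading the conclusion as an isomorphism of ordered classes rather than lattices are sound refinements of the same approach.
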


Although Theorem \ref{notset} tells us that the study of the projective profile of a ring is, in general, harder than that of the injective profile, we have some results in the case where projectivity is well-behaved. Namely, if $R$ is a right perfect ring, then any module has a projective cover, so the projectivity domain of any module is closed under arbitrary direct products. In this case, it is clear that $p\mathcal{P}(R)$ is a set, and we actually have that $p\mathcal{P}(R)$ is anti-isomorphic to the lattice of two-sided ideals contained in $J(R)$. Even more, for any element in $p\mathcal{P}(R)$, there is an easy way to find a module $M$ such that the aforementioned element coincides with $\mathfrak{Pr}^{-1}(M)$.

\begin{lemma}
Let $R$ be a right perfect ring, and let $I \subseteq J(R)$ be an ideal of $R$. Then, $\Mod\text{-}R/I$ is a $p$-portfolio, and in fact, $\Mod\text{-}R/I = \mathfrak{Pr}^{-1}(R/I)$.
\end{lemma}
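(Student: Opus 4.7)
The plan is to prove the two inclusions $\Mod\text{-}R/I \subseteq \pdom^{-1}(R/I)$ and $\pdom^{-1}(R/I) \subseteq \Mod\text{-}R/I$. The first inclusion is routine: if $N \in \Mod\text{-}R/I$, then $NI = 0$, any $R$-module epimorphism $g: N \to K$ is automatically an $R/I$-module epimorphism (since $KI = 0$), and any $R$-homomorphism $\psi: R/I \to K$ is $R/I$-linear as well. Since $R/I$ is projective over itself, the required lift exists in $\Mod\text{-}R/I$, and is automatically an $R$-module map. Hence $R/I$ is $N$-projective.

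For the converse, suppose $R/I$ is $N$-projective, and consider the canonical quotient $q: N \to N/NI$. Since $(N/NI) \cdot I = 0$, every element $\bar m \in N/NI$ determines a well-defined $R$-module map $\psi_{\bar m}: R/I \to N/NI$ sending $1 + I \mapsto \bar m$. By $N$-projectivity of $R/I$, each $\psi_{\bar m}$ lifts through $q$ to some $\phi: R/I \to N$, and the element $n := \phi(1 + I)$ satisfies $nI = 0$ (because $\phi(i + I) = 0$ for all $i \in I$) and $q(n) = \bar m$, i.e.\ $n \equiv m \pmod{NI}$. In other words, every coset of $NI$ in $N$ has a representative annihilated by $I$.

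From this, I will deduce $NI = NI^2$: given any $a = \sum_k n_k i_k \in NI$, replace each $n_k$ by an element $n'_k$ with $n'_k I = 0$ and $n_k - n'_k \in NI$; since $n'_k i_k = 0$, we obtain $a = \sum_k (n_k - n'_k)\, i_k \in (NI) \cdot I = NI^2$. Setting $M := NI$, this gives $M = MI \subseteq MJ(R) \subseteq M$, hence $M = MJ(R)$. Since $R$ is right perfect, $J(R)$ is right T-nilpotent (equivalently, every nonzero right $R$-module has a maximal submodule, so $MJ(R) = M$ is impossible for $M \ne 0$), and this forces $M = 0$, i.e.\ $NI = 0$, so $N \in \Mod\text{-}R/I$. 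The main obstacle is choosing the right lifting problem: rather than trying to show $NI = 0$ directly, I apply $N$-projectivity to the canonical surjection $N \to N/NI$, obtain an $I$-annihilated representative for every coset, bootstrap this to $NI = NI^2$, and then collapse everything using the perfect-ring Nakayama principle.
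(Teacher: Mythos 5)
Your proof is correct, but it takes a genuinely different route from the paper's. The paper argues at the level of the lattice machinery it has already set up: since $R$ is right perfect, every module has a projective cover, so $\pdom^{-1}(R/I)$ is a Wisbauer class closed under arbitrary products; by Golan's correspondence its linear filter is $\eta(L)$ for a two-sided ideal $L$, whence $\pdom^{-1}(R/I)=\Mod\text{-}R/L$ with $L\leq I$; and finally $L=I$ is forced by the uniqueness of projective covers (otherwise $R/L\twoheadrightarrow R/I$ and $\mathrm{id}_{R/I}$ would be non-isomorphic projective covers of $R/I$ in $\Mod\text{-}R/L$). You instead give a direct, element-wise argument: the single test epimorphism $N\to N/NI$ shows every coset of $NI$ has an $I$-annihilated representative, which bootstraps to $NI=(NI)I$ and hence $(NI)J(R)=NI$, so $NI=0$ by the Nakayama-type property of $J(R)$ in a right perfect ring. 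Each approach has its merits: the paper's is shorter given the correspondence with linear filters and fits the lattice-theoretic theme of the section, while yours is self-contained (no appeal to Golan or to projective covers) and isolates precisely what is needed from perfectness, namely the T-nilpotency of $J(R)$, i.e.\ that $MJ(R)=M$ forces $M=0$ for right modules --- so your argument actually proves the statement under this weaker hypothesis. The only point to be careful about is the handedness of T-nilpotency, but the fact you invoke (every nonzero right module over a right perfect ring has a maximal submodule, hence $MJ(R)\neq M$) is indeed part of Bass's Theorem P and is correct as used.
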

\begin{proof}
Since $R$ is a right perfect ring, $\mathfrak{Pr}^{-1}(R/I)$ is a Wisbauer class closed under products (\cite[Exercise 17.16]{anderson}), so it is of the form $\Mod\text{-}R/L$ for some ideal $L$ of $R$ (\cite[Proposition 1.14]{golan}). It is clear that $\Mod\text{-}R/I \subseteq \mathfrak{Pr}^{-1}(R/I)$ so we have that $L \leq I$. If we assume that $L \not= I$, then, as $I$ is contained in $J(R)$, $I/L$ is superfluous in $R/L$ and so $R/L$ and $R/I$ would be two non-isomorphic projective covers of $R/I$ in $\Mod\text{-}R/L$, a contradiction. Then, $\mathfrak{Pr}^{-1}(R/I) = \Mod\text{-}R/I$.
\end{proof}

\begin{proposition}\label{perfect2}
Let $R$ be a right perfect ring. Then, $p\mathcal{P}(R)$ is anti-isomorphic to the lattice of ideals of $R$ that are contained in $J(R)$.
\end{proposition}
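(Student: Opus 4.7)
The plan is to exhibit an explicit order-reversing bijection $\Phi \colon I \mapsto \mathfrak{Pr}^{-1}(R/I)$ from the poset of two-sided ideals of $R$ contained in $J(R)$ to $p\mathcal{P}(R)$. The preceding lemma identifies $\Phi(I) = \Mod\text{-}R/I$, so $\Phi$ is well-defined; injectivity is immediate, since $R/I \in \Phi(I)$ has annihilator exactly $I$.

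The bulk of the work is surjectivity. Given $\mathcal{A} = \mathfrak{Pr}^{-1}(M) \in p\mathcal{P}(R)$, the right perfect hypothesis guarantees that $M$ has a projective cover, so by \cite[Exercise 17.16]{anderson} $\mathcal{A}$ is closed under arbitrary direct products. Since $\mathcal{A}$ is always closed under submodules and quotients, and arbitrary direct sums embed into arbitrary direct products, $\mathcal{A}$ is in fact a Wisbauer class whose associated linear filter of right ideals is closed under arbitrary intersections. By \cite[Proposition 1.14]{golan} that filter has the form $\eta(I)$ for some two-sided ideal $I$, and the corresponding Wisbauer class is precisely $\Mod\text{-}R/I$. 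To locate $I$ inside $J(R)$, I would observe that any epimorphism onto a semisimple module splits, so every module is projective relative to every semisimple module; hence $\ssmod\text{-}R \subseteq \mathcal{A} = \Mod\text{-}R/I$, forcing $I$ to annihilate every simple module and thus $I \subseteq J(R)$. This shows $\mathcal{A} = \Phi(I)$.

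Order reversal is transparent: $I \subseteq J$ iff $\Mod\text{-}R/J \subseteq \Mod\text{-}R/I$ iff $\Phi(J) \subseteq \Phi(I)$, with the reverse direction obtained by testing against $R/J \in \Phi(J)$. Transporting the ideal lattice across $\Phi$ then yields the lattice anti-isomorphism: $\Phi(I \cap J) = \Mod\text{-}R/(I \cap J)$ is the join of $\Phi(I)$ and $\Phi(J)$ in $p\mathcal{P}(R)$, while $\Phi(I + J) = \Mod\text{-}R/(I + J) = \Phi(I) \cap \Phi(J)$ is their meet.

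The main obstacle is the surjectivity step, which depends crucially on perfectness to secure closure of $\mathfrak{Pr}^{-1}(M)$ under arbitrary products; without that input, the torsion-theoretic classification of intersection-closed linear filters is unavailable, in line with the warning in Theorem \ref{notset} that in its absence the $p$-profile can even fail to be a set.
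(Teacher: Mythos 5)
Your proposal is correct and follows essentially the same route as the paper: perfectness gives closure of each projectivity domain under arbitrary products, hence each $p$-portfolio is a Wisbauer class whose linear filter is intersection-closed and so equals $\eta(I)$ for a two-sided ideal $I \subseteq J(R)$, with surjectivity supplied by the preceding lemma. You merely organize the argument in the opposite direction (starting from the map $I \mapsto \Mod\text{-}R/I$) and spell out a few details the paper leaves implicit, such as why $I \subseteq J(R)$ and why the correspondence reverses the lattice operations.
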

\begin{proof}
Since $R$ is right perfect, every $p$-portfolio is a Wisbauer class closed under products. Then, its corresponding linear filter is closed under arbitrary intersections, so it is of the form $\eta(I) := \{J : I \leq J\}$ for a two-sided ideal $I \leq R$ (\cite[Proposition 1.14]{golan}). Thus, $p\mathcal{P}(R)$ can be identified with a subset of the lattice of ideals of $R$ that are contained in $J(R)$. By the preceding lemma, this identification is surjective.
\end{proof}

\begin{example}\label{quiver}
Let $K$ be an algebraically closed field and $Q$ be the quiver $2 \buildrel \alpha \over \leftarrow 1 \buildrel \beta \over \rightarrow 3$. Let $A := KQ$, the path algebra of $Q$. $A$ is right artinian, so $i\mathcal{P}(A)$ and $p\mathcal{P}(A)$ are both anti-isomorphic to the lattice of ideals contained in $J(A)$. Note that $J(A) = \langle \alpha, \beta \rangle$ and that the ideals it contains are $0, \langle \alpha \rangle, \langle \beta \rangle$ and $J(A)$. Then, $i\mathcal{P}(A)$ and $p{\cal P}(A)$ are isomorphic to the product of two intervals of length $1$, $i\mathcal{P}(A) \cong p{\cal P}(A) \cong {\cal L}_1 \times {\cal L}_1$. Since $Q$ is a connected quiver, $A$ is indecomposable as a ring. This shows that non-trivial factorizations on the (injective or projective) profile of a ring do not induce factorizations of the ring itself.
\end{example}

\begin{proposition}
Let $R$ be a right perfect ring. Then, the following conditions are equivalent. \\

(1) $R$ has no right p-middle class. \\

(2) Every non-semisimple quasi-projective right $R$-module is projective. \\

(3) $J(R)$ contains no non-trivial two-sided ideals.
\end{proposition}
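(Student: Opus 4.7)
The plan is to first use Proposition~\ref{perfect2} to identify $p\mathcal{P}(R)$ with the opposite of the lattice of two-sided ideals contained in $J(R)$, which makes $(1) \Leftrightarrow (3)$ immediate; then to derive $(1) \Rightarrow (2)$ directly from the definitions; and finally to prove $(2) \Rightarrow (3)$ by applying the hypothesis to the module $R/I$.

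For $(1) \Leftrightarrow (3)$, observe that under the anti-isomorphism of Proposition~\ref{perfect2}, the top portfolio $\Mod\text{-}R$ corresponds to the ideal $0$, while the bottom portfolio $\ssmod\text{-}R = \Mod\text{-}R/J(R)$ corresponds to $J(R)$ (using that over a right perfect ring a module is semisimple iff it is annihilated by $J(R)$). Thus the ``at most two elements'' condition (1) on $p\mathcal{P}(R)$ translates verbatim into condition (3) on the lattice of ideals inside $J(R)$.

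For $(1) \Rightarrow (2)$, the argument is essentially formal. Let $M$ be quasi-projective and non-semisimple. Then $M \in \pdom^{-1}(M)$, so $\pdom^{-1}(M) \neq \ssmod\text{-}R$; by (1) the only remaining possibility is $\pdom^{-1}(M) = \Mod\text{-}R$, i.e.\ $M$ is projective.

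The main content is $(2) \Rightarrow (3)$, which I would prove by contradiction. Suppose a two-sided ideal $I$ satisfies $0 \neq I \subsetneq J(R)$, and set $M = R/I$. The key input is that $M$ is quasi-projective as a right $R$-module: any target $N$ in a diagram testing quasi-projectivity of $M$ is a quotient of $M$ and hence annihilated by $I$, so all relevant maps are $R/I$-linear, and $R/I$ is projective over itself, yielding the required lift. Moreover $M$ is not semisimple because $J(M) = J(R)/I \neq 0$. By (2), $M$ is then projective, so the canonical surjection $R \to R/I$ splits, exhibiting $I$ as a direct summand of $R_R$. But a nonzero direct summand of $R$ is generated by a nonzero idempotent, and $J(R)$ contains no such idempotent, forcing $I = 0$---a contradiction. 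The only delicate point is the quasi-projectivity of $R/I$ for a two-sided $I$; the remaining steps are routine.
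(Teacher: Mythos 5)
Your proposal is correct and follows essentially the same route as the paper: $(1)\Leftrightarrow(3)$ via Proposition \ref{perfect2}, $(1)\Rightarrow(2)$ formally, and $(2)\Rightarrow(3)$ by applying the hypothesis to $R/I$ for a putative ideal $0\neq I\subsetneq J(R)$. The only cosmetic difference is in the final contradiction: the paper notes that $I$ is superfluous in $R$ (so it cannot be a direct summand), while you phrase the same fact via the absence of nonzero idempotents in $J(R)$.
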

\begin{proof}
(1) $\Leftrightarrow$ (3) is by Proposition \ref{perfect2} and (1) $\Rightarrow$ (2) is clear. We show (2) $\Rightarrow$ (3). Assume (2) and suppose, on the contrary, that there exists a two-sided ideal $I$, $0 \not= I \subsetneq J(R)$. Then, $R/I$ is not semisimple and it's clearly quasi-projective, so it must be projective. But this is a contradiction, as $I$ is a superfluous ideal of $R$. Then, $J(R)$ contains no non-trivial two-sided ideals and we're done.
\end{proof}

\begin{corollary}\label{isomorphiclat}
Let $R$ be a right artinian ring. Then, the lattices $i\mathcal{P}_{r}(R)$, $p\mathcal{P}_{r}(R)$ and $p\mathcal{P}_{\ell}(R)$ are all isomorphic. In particular, we have that for a right artinian ring the following conditions are equivalent.
\begin{enumerate}
\item $R$ has no right $i$-middle class.
\item Every non-semisimple quasi-injective right $R$-module is injective.
\item $R$ has no right $p$-middle class.
\item Every non-semisimple quasi-projective right $R$-module is projective.
\item $R$ has no left $p$-middle class.
\item Every non-semisimple quasi-projective left $R$-module is projective.
\end{enumerate}
\end{corollary}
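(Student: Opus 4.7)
The plan is to assemble the corollary from three results already established in the paper: Proposition \ref{artinian}, Proposition \ref{perfect2}, and the corollary characterizing right artinian rings with no right $i$-middle class. The key observation is that a right artinian ring is automatically both right and left perfect (since $J(R)$ is nilpotent, hence both left and right $T$-nilpotent, and $R/J(R)$ is semisimple), so Proposition \ref{perfect2} applies on both sides.

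First I would note that by Proposition \ref{artinian}, $i\mathcal{P}_r(R)$ is anti-isomorphic to the lattice $\mathcal{I}$ of two-sided ideals of $R$ contained in $J(R)$. Since $R$ is right artinian and hence right perfect, Proposition \ref{perfect2} yields that $p\mathcal{P}_r(R)$ is also anti-isomorphic to $\mathcal{I}$. Because $R$ is left perfect as well, the same proposition applied to left modules gives $p\mathcal{P}_\ell(R) \cong \mathcal{I}^{\mathrm{op}}$ (noting that the lattice of two-sided ideals of $R$ contained in $J(R)$ is left-right symmetric, as $J(R)$ is a two-sided ideal and the notion of being a two-sided ideal does not depend on the side). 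This yields the isomorphism $i\mathcal{P}_r(R) \cong p\mathcal{P}_r(R) \cong p\mathcal{P}_\ell(R)$.

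For the list of equivalent conditions, the strategy is to observe that each of the six conditions is equivalent to $J(R)$ containing no nonzero two-sided ideals of $R$. Indeed, having no right $i$-middle class amounts to $i\mathcal{P}_r(R)$ being a two-element chain, which by Proposition \ref{artinian} corresponds to $\mathcal{I} = \{0, J(R)\}$. Analogously, no right (resp. left) $p$-middle class corresponds via Proposition \ref{perfect2} on the right (resp. left) to the same condition on $\mathcal{I}$. This yields (1) $\Leftrightarrow$ (3) $\Leftrightarrow$ (5). The equivalences (1) $\Leftrightarrow$ (2), (3) $\Leftrightarrow$ (4), and (5) $\Leftrightarrow$ (6) then follow from the earlier corollary for right artinian rings and from the right/left perfect versions of the proposition preceding the current corollary.

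There is no real obstacle; the only subtle point is justifying that a right artinian ring is left perfect, which follows from standard facts (nilpotence of $J(R)$ implies two-sided $T$-nilpotence, and $R/J(R)$ is semisimple on both sides). Everything else is an immediate splicing together of the earlier results, so the proof is essentially a bookkeeping exercise matching the six module-theoretic conditions to the single ideal-theoretic condition on $J(R)$.
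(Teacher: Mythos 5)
Your proposal is correct and is precisely the assembly the paper intends: the corollary is stated without an explicit proof because it follows by combining Proposition \ref{artinian}, Proposition \ref{perfect2} applied on both sides (using that a right artinian ring is left and right perfect), and the two preceding equivalence results, all anchored to the side-independent condition that $J(R)$ contains no nonzero two-sided ideals.
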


The following proposition tells us that if, instead of assuming that $R$ is a right artinian ring we assume that $R$ is a right perfect ring, we still get $(1) \Rightarrow (3)$ in the preceding corollary.

\begin{proposition}
Let $R$ be a right perfect ring. If $R$ has no right $i$-middle class, then $R$ has no right $p$-middle class.
\end{proposition}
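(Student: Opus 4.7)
The plan is to reduce this to the ring-theoretic criterion provided by the immediately preceding proposition, which tells us that for a right perfect ring, having no right $p$-middle class is equivalent to $J(R)$ containing no nonzero proper two-sided ideals. So the task becomes: assuming $R$ is right perfect with no right $i$-middle class, show that $J(R)$ contains no non-trivial two-sided ideals of $R$.

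I would argue by contradiction. Suppose there exists a two-sided ideal $I$ with $0 \subsetneq I \subsetneq J(R)$, and consider the Wisbauer class $\mathcal{W} := \sigma[R/I]$. Since $I$ is two-sided, $\mathcal{W}$ coincides with $\Mod\text{-}R/I$ viewed inside $\Mod\text{-}R$. Because $I \subseteq J(R)$ and every semisimple right $R$-module is annihilated by $J(R)$, the class $\mathcal{W}$ contains $\ssmod\text{-}R$. Theorem \ref{every} then guarantees that $\mathcal{W}$ is an $i$-portfolio.

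The key verification, which I expect to be the main (and really the only) obstacle, is that $\mathcal{W}$ sits strictly between $\ssmod\text{-}R$ and $\Mod\text{-}R$; this will directly contradict the no-$i$-middle-class hypothesis. For the lower strict inclusion, note that $R/I$ belongs to $\mathcal{W}$ but is not semisimple: $R$ is right perfect, hence semilocal, and for $I \subseteq J(R)$ one has $J(R/I) = J(R)/I$, which is nonzero since $I \subsetneq J(R)$, so $R/I$ has nonzero Jacobson radical and cannot be semisimple. For the upper strict inclusion, the regular module $R$ belongs to $\mathcal{W} = \Mod\text{-}R/I$ only if $RI = 0$, forcing $I = 0$ and contradicting our choice of $I$. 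Hence $\ssmod\text{-}R \subsetneq \mathcal{W} \subsetneq \Mod\text{-}R$, producing the forbidden middle-class $i$-portfolio.

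This contradiction shows $J(R)$ has no non-trivial two-sided ideals, and the preceding proposition then yields that $R$ has no right $p$-middle class. The proof is essentially a packaging of Theorem \ref{every} together with the perfect-ring characterization; no genuinely new computation is needed beyond the two inclusions above.
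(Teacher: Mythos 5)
Your proof is correct and follows essentially the same route as the paper: both reduce to showing $J(R)$ contains no ideal strictly between $0$ and $J(R)$ and then invoke the perfect-ring characterization of the $p$-profile; the paper exhibits the forbidden middle element as the linear filter $\eta(I)$ sitting strictly between $\eta(0)$ and $\eta(J(R))$, which under the isomorphism of Theorem \ref{every} is exactly your Wisbauer class $\Mod\text{-}R/I$ sitting strictly between $\Mod\text{-}R$ and $\ssmod\text{-}R$.
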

\begin{proof}
We may assume that $R$ is not semisimple artinian. Then, as $R$ is right perfect, this implies that $J(R) \not= 0$. We show that $J(R)$ does not contain properly a nonzero two-sided ideal of $R$. If this were the case, say there exists $I$ a two-sided ideal with $0 < I < J(R)$ then we have three different linear filters of right ideals containing all the maximal right ideals, namely $\eta(0)$, $\eta(I)$ and $\eta(J(R))$. This contradicts the fact that $R$ has no right $i$-middle class. Thus, the only ideals contained in $J(R)$ are $0$ and $J(R)$. This implies, using again that $R$ is right perfect, that $R$ has no projective middle class.
\end{proof} \\

We end this section with results that are similar to Proposition \ref{cyclics} and Corollary \ref{cyclicpoor}. Note that it is shown in \cite[Theorem 2.8]{holston} that, for any ring, if $\Gamma$ is a set of representatives of isomorphism classes of simple modules, then $\bigoplus_{S \in \Gamma} S$ is $p$-poor.

\begin{proposition}
Let $R$ be a non-semisimple artinian ring such that $p\mathcal{P}(R)$ is linearly ordered. Then, for every non-$p$-poor module $M$, there exists a simple module $S$ with $\mathfrak{Pr}^{-1}(S) \subsetneq \mathfrak{Pr}^{-1}(M)$.
\end{proposition}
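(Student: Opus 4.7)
The plan is to mimic the strategy of Propositions \ref{cyclics} and \ref{simpleipoor} on the injective side. Since $R$ is right artinian, there are only finitely many isomorphism classes of simple right $R$-modules; let $\Gamma = \{S_1, \dots, S_k\}$ be a complete irredundant set of representatives. By the result from \cite[Theorem 2.8]{holston} cited just before the statement, $P := \bigoplus_{i=1}^{k} S_i$ is a $p$-poor module, i.e. $\mathfrak{Pr}^{-1}(P) = \ssmod\text{-}R$. Because $\Gamma$ is \emph{finite}, and projectivity domains are closed under finite direct sums, we get the key identity
\[
\mathfrak{Pr}^{-1}(P) \;=\; \bigcap_{i=1}^{k} \mathfrak{Pr}^{-1}(S_i) \;=\; \ssmod\text{-}R.
\]

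Now argue by contradiction. Let $M$ be a non-$p$-poor module and assume that for every $S \in \Gamma$ we have $\mathfrak{Pr}^{-1}(S) \not\subsetneq \mathfrak{Pr}^{-1}(M)$. Since $p\mathcal{P}(R)$ is linearly ordered, the classes $\mathfrak{Pr}^{-1}(S)$ and $\mathfrak{Pr}^{-1}(M)$ are always comparable, so the failure of the strict inclusion forces $\mathfrak{Pr}^{-1}(M) \subseteq \mathfrak{Pr}^{-1}(S)$ for every $S \in \Gamma$. Intersecting over $\Gamma$ and using the identity above yields
\[
\mathfrak{Pr}^{-1}(M) \;\subseteq\; \bigcap_{i=1}^{k} \mathfrak{Pr}^{-1}(S_i) \;=\; \ssmod\text{-}R.
\]
On the other hand, every module is projective relative to any semisimple module (epimorphisms out of semisimples split), so the reverse inclusion $\ssmod\text{-}R \subseteq \mathfrak{Pr}^{-1}(M)$ holds unconditionally. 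This forces $\mathfrak{Pr}^{-1}(M) = \ssmod\text{-}R$, contradicting the hypothesis that $M$ is not $p$-poor. Hence some $S \in \Gamma$ satisfies $\mathfrak{Pr}^{-1}(S) \subsetneq \mathfrak{Pr}^{-1}(M)$, as claimed.

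There is essentially no main obstacle here: the only subtle point is making sure the intersection over $\Gamma$ actually equals $\mathfrak{Pr}^{-1}(P)$, which needs \emph{finiteness} of $\Gamma$ (guaranteed by $R$ being artinian) so that we may use closure of projectivity domains under finite direct sums without invoking projective covers or the perfect ring hypothesis.
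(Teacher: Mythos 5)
Your argument is correct and is essentially the one the paper intends: the proposition is stated there without proof as the projective analogue of Proposition \ref{cyclics}, and you reproduce that argument step for step on the projective side (comparability from the linear order forces $\mathfrak{Pr}^{-1}(M)\subseteq\mathfrak{Pr}^{-1}(S)$ for all simples $S$, and intersecting collapses $\mathfrak{Pr}^{-1}(M)$ to the semisimples). One small correction: the identity $\mathfrak{Pr}^{-1}\bigl(\bigoplus_{i}S_i\bigr)=\bigcap_{i}\mathfrak{Pr}^{-1}(S_i)$ does not follow from projectivity domains being closed under finite direct sums (that is closure in the other variable, i.e.\ of the class $\mathfrak{Pr}^{-1}(M)$ itself); it is the standard fact that a direct sum is $N$-projective if and only if each summand is (\cite[Proposition 16.10]{anderson}), which holds for arbitrary index sets, so the finiteness of $\Gamma$ is not actually needed at that step.
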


\begin{corollary}
Let $R$ be a ring such that $p\mathcal{P}(R)$ is linearly ordered and atomic (for example, an artinian chain ring). Then, $R$ has a simple $p$-poor module.
\end{corollary}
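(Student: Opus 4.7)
The plan is to mirror the proof of Corollary~\ref{cyclicpoor}: isolate an atom of $p\mathcal{P}(R)$ and exploit the fact, recalled just before the preceding proposition, that the direct sum of a representative set of simples is $p$-poor. First I would use the atomicity hypothesis to fix an atom $\mathcal{A}$ of $p\mathcal{P}(R)$, so that $\mathcal{A} \supsetneq \ssmod\text{-}R$ with nothing strictly between. The linear order then forces $\mathcal{A}$ to be the unique atom (two distinct atoms would be incomparable) and, more importantly, every $p$-portfolio strictly above $\ssmod\text{-}R$ must contain $\mathcal{A}$.

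Next I would let $\Gamma$ be a complete set of representatives of the isomorphism classes of simple right $R$-modules. By \cite[Theorem 2.8]{holston} the module $\bigoplus_{S \in \Gamma} S$ is $p$-poor, so $\pdom^{-1}(\bigoplus_{S \in \Gamma} S) = \ssmod\text{-}R$. I would then observe that $\pdom^{-1}(\bigoplus_{S \in \Gamma} S) = \bigcap_{S \in \Gamma} \pdom^{-1}(S)$: the forward inclusion follows because each $S$ is a direct summand of the sum, while the reverse inclusion is the statement that given partial lifts $\bar{\psi}_S \colon S \to N$ of the restrictions of a map $\psi \colon \bigoplus_{S \in \Gamma} S \to K$ along an epi $N \to K$, the universal property of the direct sum assembles them into a single lift $\bar{\psi} \colon \bigoplus_{S \in \Gamma} S \to N$. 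Consequently $\bigcap_{S \in \Gamma} \pdom^{-1}(S) = \ssmod\text{-}R$.

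The conclusion then follows by contradiction: if no $S \in \Gamma$ were $p$-poor, each $\pdom^{-1}(S)$ would strictly contain $\ssmod\text{-}R$ and hence contain $\mathcal{A}$, yielding $\mathcal{A} \subseteq \bigcap_{S \in \Gamma} \pdom^{-1}(S) = \ssmod\text{-}R$, absurd. The only point requiring care, more subtle than hard, is the order-theoretic upgrade from ``$\pdom^{-1}(S)$ is strictly above $\ssmod\text{-}R$'' to ``$\pdom^{-1}(S)$ contains $\mathcal{A}$''; without the linear order several incomparable atoms could be missed by a given $\pdom^{-1}(S)$, and without atomicity a strictly descending chain above $\ssmod\text{-}R$ could intersect down to $\ssmod\text{-}R$ itself. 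Both hypotheses on $p\mathcal{P}(R)$ are genuinely used at exactly this step.
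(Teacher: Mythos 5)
Your proof is correct and is essentially the paper's argument: the paper obtains the result by applying the preceding proposition (for every non-$p$-poor $M$ there is a simple $S$ with $\pdom^{-1}(S) \subsetneq \pdom^{-1}(M)$) to a module whose projectivity domain is an atom, and that proposition is itself proved by exactly the intersection argument you give, using that $\bigoplus_{S \in \Gamma} S$ is $p$-poor and that $\pdom^{-1}(\bigoplus_{S\in\Gamma} S) = \bigcap_{S\in\Gamma}\pdom^{-1}(S)$. You have merely inlined that step, and your identification of where linearity and atomicity are each used matches the paper's reasoning.
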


Note that, if $R$ is a two sided artinian ring, then $i\mathcal{P}_{\ell}(R) \cong p\mathcal{P}_{\ell}(R) \cong i\mathcal{P}_{r}(R) \cong p\mathcal{P}_{\ell}(R)$. For this reason, we use $\mathcal{P}(R)$ to denote any of these (isomorphic) lattices, and we call any of these lattices the profile of $R$.

\begin{proposition}
Let $R$ be a QF-ring with a unique simple (up to isomorphism). Then, $\mathcal{P}(R)$ is linearly ordered of length $n$ if and only if $\mathcal{P}(R/\Soc(R))$ is linearly ordered of length $n-1$.
\end{proposition}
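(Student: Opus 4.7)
The plan is to combine the two descriptions of $\mathcal{P}(R)$ already available: Example \ref{qfrings} identifies $\mathcal{P}(R)$ (for $R$ a QF-ring) with the lattice of two-sided ideals of $R/\Soc(R)$, while Proposition \ref{artinian}, applied to the right artinian ring $R/\Soc(R)$, realizes $\mathcal{P}(R/\Soc(R))$ as anti-isomorphic to the lattice of two-sided ideals of $R/\Soc(R)$ contained in $J(R/\Soc(R)) = J(R)/\Soc(R)$. The task is then purely lattice-theoretic: relate the lattice of all ideals of $R/\Soc(R)$ to the sublattice of those contained in $J(R/\Soc(R))$.

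The key observation that bridges these two descriptions is the hypothesis on simples. Since $R$ is right artinian and has a unique simple module up to isomorphism, $R/J(R)$ is a simple artinian ring (a single matrix ring over a division ring), so it has no non-trivial two-sided ideals. By the correspondence theorem, $J(R)$ is the unique maximal two-sided ideal of $R$, and therefore $J(R/\Soc(R))$ is the unique maximal two-sided ideal of $R/\Soc(R)$. In particular, every proper ideal of $R/\Soc(R)$ is contained in $J(R/\Soc(R))$, so the lattice of all ideals of $R/\Soc(R)$ is obtained from the lattice of ideals contained in $J(R/\Soc(R))$ by adjoining $R/\Soc(R)$ as a new top element; the two lattices thus differ in length by exactly one.

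Assembling these three ingredients gives the equivalence immediately. If $\mathcal{P}(R)$ is a chain of length $n$, then by Step 1 the lattice of ideals of $R/\Soc(R)$ is a chain of length $n$, whose penultimate element is forced to be $J(R/\Soc(R))$; removing the top leaves a chain of length $n-1$, which is anti-isomorphic to $\mathcal{P}(R/\Soc(R))$ by Step 2. Conversely, starting from a chain of length $n-1$ for the lattice of ideals of $R/\Soc(R)$ contained in $J(R/\Soc(R))$, adjoining $R/\Soc(R)$ on top (legitimate by Step 3) reconstructs the full lattice of ideals of $R/\Soc(R)$ as a chain of length $n$, which by Step 1 is $\mathcal{P}(R)$.

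I do not anticipate any serious obstacle; the argument is essentially a bookkeeping exercise once Example \ref{qfrings} and Proposition \ref{artinian} are combined. The only point requiring care is the deduction that $J(R)$ is the unique maximal two-sided ideal from the single-simple hypothesis (so that passage to the full lattice of ideals only adds one element on top). One should also tacitly take $n \geq 1$, i.e., $R$ not semisimple, so that $\Soc(R) \lneq R$ and $R/\Soc(R)$ is a non-zero ring for which $\mathcal{P}(R/\Soc(R))$ is defined.
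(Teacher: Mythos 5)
Your proof is correct, but it routes the bookkeeping differently from the paper. The paper applies Proposition \ref{artinian} to $R$ itself, so that $\mathcal{P}(R)$ becomes the chain of two-sided ideals $0 < \Soc(R) < I_3 < \dots < I_n = J(R)$ contained in $J(R)$; the load-bearing fact there is that $\Soc(R)$ is the \emph{unique minimal} two-sided ideal of $R$ (extracted from self-injectivity, via the trace of the unique simple module), so that factoring out $\Soc(R)$ deletes exactly one element at the \emph{bottom} of the chain. You instead combine Example \ref{qfrings} (the full ideal lattice of $R/\Soc(R)$) with Proposition \ref{artinian} applied to $R/\Soc(R)$ (the ideals below $J(R/\Soc(R))$), and your load-bearing fact is that $J(R)$ is the \emph{unique maximal} two-sided ideal, which you get immediately from $R/J(R)$ being simple artinian; the two lattices then differ by a single element adjoined at the \emph{top}. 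Your bridging lemma is cheaper than the paper's (no self-injectivity needed for it, whereas the paper derives the same statement more laboriously from the decomposition $R = e_1R \oplus \dots \oplus e_mR$ with isomorphic tops), but in exchange you lean on Example \ref{qfrings}, i.e., on the main injectivity of $R_R$ from \cite{raggi2}, where the paper's proof only invokes Proposition \ref{artinian}. Both arguments are sound; your explicit caveat that $R$ must be non-semisimple (so that $\Soc(R) \leq J(R)$ and $R/\Soc(R) \neq 0$) addresses a point the paper leaves tacit.
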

\begin{proof}
Under the hypothesis, $\Soc(R)$ is the trace of the unique simple module. As $R$ is self-injective, this implies that $\Soc(R)$ is the unique minimal ideal of $R$. Now, we can decompose $R$ into a direct sum of indecomposables, $R = e_1R \oplus \cdots \oplus e_mR$, with $\text{top}(e_iR) \cong \text{top}(e_jR)$ for every $i,j$. Since $R$ is right artinian, this implies that $e_iR \cong e_jR$ for every $i, j$. Since $J(e_iR)$ is the unique maximal submodule of $e_iR$ and $R$ is self-injective, this implies that $J(R)$ is the unique maximal ideal of $R$. \\
Now assume $\mathcal{P}(R)$ is linearly ordered of length $n$. Then, by the characterization of the profile of an artinian ring, the lattice of two-sided ideals contained in $J(R)$ is $0 < \Soc(R) < I_3 \dots < I_n = J(R)$. This implies that the lattice of two-sided ideals of $R/\Soc(R)$ contained in $J(R/\Soc(R))$ is $0 \leq I_3/\Soc(R) \dots < I_n/\Soc(R) = J(R/\Soc(R)$. Hence, $\mathcal{P}(R/\Soc(R))$ is linearly ordered of length $n-1$. \\
On the other hand, if the profile of $R/\Soc(R)$ is linearly ordered of length $n-1$, the bijective correspondence theorem implies that the lattice of ideals contained in $J(R)$ is linearly ordered of length $n$ (note that here we are using very heavily the fact that $\Soc(R)$ is the minimal ideal of $R$.
\end{proof}

\begin{proposition}\label{matrices}
Let $R := \left(\begin{array}{cc} \mathbb{Q} & 0 \\ \mathbb{R} & \mathbb{R} \end{array}\right)$. Then, $R$ has no left $i$-middle class, has no (right or left) $p$-middle class, but it is not without right $i$-middle class.
\end{proposition}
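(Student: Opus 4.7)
The plan is to reduce three of the four claims to the lattice of two-sided ideals of $R$ contained in $J(R)$, using results already in the paper; only the right $i$-middle class requires a direct construction, because $R$ fails to be right artinian.

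\textbf{Structural setup.} Let $e_1, e_2$ be the standard matrix idempotents. As a left $R$-module, $Re_1 = \begin{pmatrix} \mathbb{Q} & 0 \\ \mathbb{R} & 0 \end{pmatrix}$ has composition length $2$ (with factors the simple left modules $\mathbb{Q}$ and $\mathbb{R}$), and $Re_2 \cong \mathbb{R}$ is simple, so $R$ is left artinian, hence semiprimary and both left and right perfect. The Jacobson radical is $J(R) = \begin{pmatrix} 0 & 0 \\ \mathbb{R} & 0 \end{pmatrix}$, and a two-sided ideal contained in $J(R)$ corresponds to an $(\mathbb{R},\mathbb{Q})$-subbimodule $B \subseteq \mathbb{R}$. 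Since $\mathbb{R}$ is already simple as a left $\mathbb{R}$-module, the only possibilities are $B=0$ and $B=\mathbb{R}$, so the lattice of two-sided ideals inside $J(R)$ has exactly two elements.

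\textbf{Easy parts.} Because $R$ is both left and right perfect, Proposition~\ref{perfect2} applied on each side gives $p\mathcal{P}_r(R)$ and $p\mathcal{P}_\ell(R)$ each anti-isomorphic to $\{0,J(R)\}$, so $R$ has no $p$-middle class on either side. Because $R$ is left artinian, Proposition~\ref{artinian} applied on the left gives $i\mathcal{P}_\ell(R)$ anti-isomorphic to the same two-element lattice, so $R$ has no left $i$-middle class.

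\textbf{Hard part: right $i$-middle class.} The ring $R$ is \emph{not} right artinian: the right action on $e_2Re_1$ factors through the projection $R \to \mathbb{Q}$, so as a right $R$-module $e_2Re_1$ is a semisimple module isomorphic to $S_1^{(\dim_\mathbb{Q}\mathbb{R})}$, which has infinite composition length. Hence Proposition~\ref{artinian} is unavailable and I will produce a Wisbauer class strictly between $\ssmod\text{-}R$ and $\Mod\text{-}R$ by exhibiting an appropriate filter and invoking Theorem~\ref{every}. Take
\[ \mathfrak{F} = \{I \leq R_R : R/I \text{ has finite composition length}\}. \]
Closure under finite intersections follows from $R/(I \cap J) \hookrightarrow R/I \oplus R/J$; upward closure is clear; closure under $(I:r)$ comes from the right $R$-linear embedding $R/(I:r) \hookrightarrow R/I$, $s + (I:r) \mapsto rs+I$. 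Both maximal right ideals $M_1$ and $M_2$ lie in $\mathfrak{F}$, since they have simple quotient.

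\textbf{Strict containments.} For the strict lower bound, pick a $\mathbb{Q}$-hyperplane $B \subsetneq \mathbb{R}$ and set $I_0 = \begin{pmatrix} \mathbb{Q} & 0 \\ B & 0 \end{pmatrix}$. A direct computation using $R = e_1R \oplus e_2R$ gives $R/I_0 \cong e_2R/\begin{pmatrix} 0 & 0 \\ B & 0 \end{pmatrix}$, which has length $2$ with socle $S_1$ and top $S_2$; thus $I_0 \in \mathfrak{F}$, but $J(R) \not\leq I_0$, so $\mathfrak{F}$ is strictly larger than the filter $\{I : J(R) \leq I\}$ that corresponds to $\ssmod\text{-}R$. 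For the strict upper bound, $0 \notin \mathfrak{F}$ because $R$ has infinite length as a right $R$-module (its socle contains $e_2Re_1$). Theorem~\ref{every} now supplies a Wisbauer class corresponding to $\mathfrak{F}$ that lies strictly between $\ssmod\text{-}R$ and $\Mod\text{-}R$, witnessing a right $i$-middle class.
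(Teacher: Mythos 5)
Your proof is correct, and while the overall strategy matches the paper's (the three ``no middle class'' claims are read off from the two-element lattice of two-sided ideals inside $J(R)$ via the perfect/artinian descriptions of the profiles, and the right $i$-middle class is witnessed by a third linear filter of right ideals containing all maximal right ideals), your construction of that third filter is genuinely different and worth comparing. The paper exhibits an explicit family, namely $\eta(J(R))$ together with the right ideals $\bigl(\begin{smallmatrix}0&0\\X&0\end{smallmatrix}\bigr)$ and $\bigl(\begin{smallmatrix}\mathbb{Q}&0\\X&0\end{smallmatrix}\bigr)$ for $X$ of finite $\mathbb{Q}$-codimension in $\mathbb{R}$, and verifies axiom F4 by a four-case computation of $(I:x)$. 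You instead take the filter of all right ideals of finite colength and check every axiom softly, from the embeddings $R/(I\cap J)\hookrightarrow R/I\oplus R/J$ and $R/(I:r)\hookrightarrow R/I$; the strictness at both ends is then a two-line length count. Your version is shorter, avoids all matrix computations, and is in fact the more robust formulation: the paper's family as literally listed is not closed under superideals (a ``graph'' ideal such as $\{\bigl(\begin{smallmatrix}q&0\\q+x&0\end{smallmatrix}\bigr) : q\in\mathbb{Q},\ x\in X\}$ contains a member of $\mathcal{A}$ but lies in none of the three pieces), so one must pass to its upward closure anyway --- and that upward closure is exactly your finite-colength filter, since any finite-colength right ideal $I$ contains $I\cap J(R)=\bigl(\begin{smallmatrix}0&0\\X&0\end{smallmatrix}\bigr)$ with $X$ of finite codimension. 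What the paper's approach buys is a completely concrete list of the ideals involved; what yours buys is brevity and a verification that transparently generalizes (finite-colength right ideals form a linear filter over any ring).
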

\begin{proof}
It is well known that $R$ is left artinian (hence two-sided perfect) but not right artinian. The Jacobson radical of $R$ is $J(R) = \left(\begin{array}{cc} 0 & 0 \\ \mathbb{R} & 0 \end{array}\right)$, which does not contain nonzero two-sided ideals. This shows that $R$ has no left $i$-middle class and that it has no right and left $p$-middle class. Now we show that $R$ is not without right $i$-middle class. To see this, it suffices to show three distinct linear filters of right ideals containing all the maximal right ideals. Clearly, two of these linear filters are the trivial filter $\eta(0)$ consisting of all right ideals, and the filter $\eta(J(R))$, consisting of all right ideals containing $J(R)$.\\
Now let $\mathcal{A} = \{\left(\begin{array}{cc} 0 & 0 \\ X & 0 \end{array}\right) : X \; \text{is a} \; \mathbb{Q}\text{-subspace of} \; \mathbb{R} \; \text{such that} \; \dim_{\mathbb{Q}}(\mathbb{R}/X) < \infty\}$, and $\mathcal{B} = \{\left(\begin{array}{cc} \mathbb{Q} & 0 \\ X & 0 \end{array}\right) : X \; \text{is a} \; \mathbb{Q}\text{-subspace of} \; \mathbb{R} \; \text{such that} \; \dim_{\mathbb{Q}}(\mathbb{R}/X) < \infty\}$ We claim that $\mathfrak{F} := \mathcal{A}\cup\mathcal{B}\cup\eta(J(R))$ is a linear filter of right ideals. It is clear that axioms F1), F2) and F3) of the definition of a linear filter are satisfied by the family $\mathfrak{F}$. We show F4), namely, that for every $I \in \mathfrak{F}$, $x \in R$, $(I:x) = \{y \in R : xy \in I\}$ is again a member of $\mathfrak{F}$. Note that, if $I \in \eta(J(R))$ then $(I:x) \in \eta(J(R)) \subseteq \mathfrak{F}$, as $\eta(J(R))$ is a linear filter of right ideals. Also, if $I \in \mathcal{B}$ then there exists $J \in \mathcal{A}$ with $J \subseteq I$, so $(J:x) \subseteq (I:x)$. Hence, we may assume without loss of generality that $I \in \mathcal{A}$, that is, there exists a subspace of $\mathbb{R}$ of finite codimension, $Y$, such that $I = \left(\begin{array}{cc} 0 & 0 \\ Y & 0 \end{array}\right)$.\\
Let  $x = \left(\begin{array}{cc}q & 0 \\ r_1 & r_2\end{array}\right) \in R$, $y = \left(\begin{array}{cc} Q & 0 \\ R_1 & R_2 \end{array}\right) \in (I:x)$. Then, $Qq = 0$, $R_2r_2 = 0$ and $qr_1 + R_1r_2 \in Y$. We consider four cases.\\
{\bf Case 1.} $q \not= 0$, $r_2 \not= 0$. In this case, $Q = 0$, $R_2 = 0$ and $R_1r_2 \in Y$. Hence, $(I:x) = \left(\begin{array}{cc} 0 & 0 \\ r_{2}^{-1}Y & 0 \end{array}\right) \in \mathfrak{F}$.\\
{\bf Case 2.} $q \not= 0$, $r_2 = 0$. In this case, $Q = 0$ and $R_2$, $R_1$ can be any real number. Hence, $(I:x) = \left(\begin{array}{cc} 0 & 0 \\ \mathbb{R} & \mathbb{R} \end{array}\right) \in \mathfrak{F}$. \\
{\bf Case 3.} $q = 0, r_2 = 0$. Then, we have two subcases. If $r_1 \in Y$, then $x \in I$ so $(I:x) = R$. If $r_1 \not\in Y$, then $(I:x) = \left(\begin{array}{cc} 0 & 0 \\ \mathbb{R} & \mathbb{R} \end{array}\right)$. In any case, $(I:x) \in \mathfrak{F}$. \\
{\bf Case 4.} $q = 0, r_2 \not= 0$. Note that, in this case, $(I:x)$ contains the right ideal $\left(\begin{array}{cc}0 & 0 \\ r_2^{-1}Y & 0\end{array}\right) \in \mathfrak{F}$. Since $\mathfrak{F}$ is closed under superideals, $(I:x) \in \mathfrak{F}$. \\

Hence, $\mathfrak{F}$ is a linear filter of right ideals. Since $\eta(J(R)) \subseteq \mathfrak{F}$, $\mathfrak{F}$ contains all the maximal right ideals. From the definition of $\mathfrak{F}$, it is clear that $\eta(J(R)) \subsetneq \mathfrak{F} \subsetneq \eta(0)$. Hence, $R$ is not without right $i$-middle class. \end{proof}


\section{When projectivity and injectivity domains coincide.}

A classic characterization of QF rings is that they are those rings for which every projective module is injective, or, equivalently, every injective module is projective. That inspires the following definition.

\begin{definition}
Let $R$ be a ring. We say that $R$ is a right super QF ring if $\idom^{-1}(M) = \pdom^{-1}(M)$ for all right modules $M$.
\end{definition}

Clearly, any right super QF ring is QF, as the definition implies that every projective module is injective. Super QF rings have their origin in \cite[Proposition 3.14]{holston}, where the authors show that any QF-ring with homogeneous right socle and $J(R)^2 = 0$ is super QF. Now we show that, unlike the class of QF-rings, the class of super QF-rings is closed under quotient rings.

\begin{proposition}\label{quotient}
Let $R$ be a right super QF ring, and let $I$ be an ideal of $R$. Then, $R/I$ is right super QF.
\end{proposition}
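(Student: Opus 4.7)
The plan is to show that for any right $R/I$-module $\bar{M}$, the equality $\idom^{-1}_{R/I}(\bar{M}) = \pdom^{-1}_{R/I}(\bar{M})$ follows from the corresponding equality over $R$ by observing that relative injectivity and relative projectivity are intrinsic to the pair of modules involved and do not depend on whether one views them over $R$ or over $R/I$.

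More precisely, I would first establish the following bridge: if $X$ and $Y$ are right $R/I$-modules (equivalently, right $R$-modules annihilated by $I$), then $X$ is $Y$-injective as an $R/I$-module if and only if $X$ is $Y$-injective as an $R$-module. For one direction, every $R$-submodule of $Y$ is an $R/I$-submodule (and vice versa) because $I$ annihilates $Y$, and any $R$-homomorphism between modules killed by $I$ is automatically an $R/I$-homomorphism; the extension produced by $R/I$-injectivity is therefore an $R$-homomorphism, and the extension produced by $R$-injectivity is automatically $R/I$-linear. The analogous statement for projectivity is proved the same way: epimorphisms coincide, and hom-sets coincide, so lifting problems translate directly between the two categories.

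With this bridge in hand, the argument closes quickly. For any right $R/I$-module $\bar{M}$ we obtain
\[
\idom^{-1}_{R/I}(\bar{M}) = \idom^{-1}_{R}(\bar{M}) \cap \Mod\text{-}R/I, \qquad \pdom^{-1}_{R/I}(\bar{M}) = \pdom^{-1}_{R}(\bar{M}) \cap \Mod\text{-}R/I.
\]
Since $R$ is right super QF, the two right-hand $R$-sides are equal, and therefore so are the left-hand sides. Hence $R/I$ is right super QF.

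I do not expect a serious obstacle here; the only point requiring any care is the bridging lemma, and in particular verifying that the extension or lift produced in one category is automatically a morphism in the other. That is immediate from the fact that all modules involved are annihilated by $I$.
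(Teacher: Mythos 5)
Your argument is correct and is essentially the paper's own proof: the paper likewise identifies $\Mod\text{-}R/I$ with the modules annihilated by $I$ and writes the same chain $\idom^{-1}(M_{R/I}) = \idom^{-1}(M_R)\cap\Mod\text{-}R/I = \pdom^{-1}(M_R)\cap\Mod\text{-}R/I = \pdom^{-1}(M_{R/I})$. Your bridging lemma just spells out the justification the paper leaves implicit.
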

\begin{proof}
We may identify Mod-$R/I$ with the full subcategory of Mod-$R$ consisting of modules which are annihilated by $I$. Then, for any $R/I$-module $M$, $\idom^{-1}(M_{R/I}) = \idom^{-1}(M_R)\cap$Mod-$R/I = \pdom^{-1}(M_R)\cap$Mod-$R/I = \pdom^{-1}(M_{R/I})$. Therefore, $R/I$ is a right super QF ring.
\end{proof}

\begin{proposition}\label{product}
Let $R_1$ and $R_2$ be right super QF rings. Then, $R_1 \times R_2$ is right super QF.
\end{proposition}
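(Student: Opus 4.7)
The strategy is to reduce the problem to the two factors by exploiting the product decomposition of modules over $R_1 \times R_2$. Any right $R_1 \times R_2$-module $M$ splits canonically as $M = M_1 \times M_2$ via the central idempotents $(1,0)$ and $(0,1)$, where $M_i \in \Mod\text{-}R_i$. As already used in the proof of Proposition \ref{product2}, this decomposition induces $\idom^{-1}(M) = \idom^{-1}(M_1) \times \idom^{-1}(M_2)$, and the analogous decomposition for projectivity domains was invoked a few lines above in establishing that $p\mathcal{P}(R \times S) \cong p\mathcal{P}(R) \times p\mathcal{P}(S)$. Both identifications rest on the same observation: an $R_1 \times R_2$-linear map between product modules splits coordinatewise.

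Granted these two decompositions, the plan is simply to fix an arbitrary $M \in \Mod\text{-}(R_1 \times R_2)$, write $M = M_1 \times M_2$, and compute
$$\idom^{-1}(M) = \idom^{-1}(M_1) \times \idom^{-1}(M_2) = \pdom^{-1}(M_1) \times \pdom^{-1}(M_2) = \pdom^{-1}(M),$$
where the outer equalities are the product decompositions mentioned above and the middle equality uses the super QF hypothesis on each $R_i$. Since $M$ was arbitrary, this yields the required equality $\idom^{-1}(M) = \pdom^{-1}(M)$ for every $M \in \Mod\text{-}(R_1 \times R_2)$.

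There is no real obstacle. The only point deserving a moment of care is the coordinatewise decomposition of projectivity domains, which is not spelled out as a labelled statement in the excerpt. It follows just as in the injective case: given an epimorphism $g : N \to K$ of $R_1 \times R_2$-modules, both $N$ and $K$ split as $N_1 \times N_2$ and $K_1 \times K_2$ with $g = g_1 \times g_2$ and each $g_i$ an epimorphism of $R_i$-modules, and any homomorphism $M \to K$ decomposes accordingly; thus $M$ is $N$-projective as an $R_1 \times R_2$-module if and only if $M_i$ is $N_i$-projective over $R_i$ for $i = 1, 2$. Once this routine verification is recorded, the proposition follows.
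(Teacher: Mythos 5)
Your proof is correct and is essentially identical to the paper's: both decompose an arbitrary $R_1 \times R_2$-module as $M = M_1 \times M_2$ and compute $\idom^{-1}(M) = \idom^{-1}(M_1) \times \idom^{-1}(M_2) = \pdom^{-1}(M_1) \times \pdom^{-1}(M_2) = \pdom^{-1}(M)$ using the super QF hypothesis on each factor. Your extra remark verifying the coordinatewise decomposition of projectivity domains is a point the paper leaves implicit, but it does not change the argument.
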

\begin{proof}
Let $M$ be a $R_1 \times R_2$-module. Then, $M = M_1 \oplus M_2$, where $M_1 = MR_1 \in \Mod\text{-}R_1$ and $M_2 = MR_2 \in \Mod\text{-}R_2$. Then, $\idom^{-1}(M) = \idom^{-1}(M_{1_{R_1}}) \times \idom^{-1}(M_{2_{R_2}}) = \pdom^{-1}(M_{1_{R_1}}) \times \pdom^{-1}(M_{2_{R_2}}) = \pdom^{-1}(M)$. Then, $R_1\times R_2$ is right super QF.
\end{proof}

Next, we show that any artinian chain ring is right super QF. Surprisingly, these are, basically, the only examples of super QF rings.

\begin{proposition}\label{chain}
Let $R$ be an artinian chain ring. Then, $R$ is right and left super QF.
\end{proposition}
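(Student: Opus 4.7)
The plan is to combine the structure of modules over an artinian chain ring with the descriptions of the profiles already established. Recall that the ideals of $R$ (two-sided, left, and right all coincide) form a chain $R > J > J^2 > \cdots > J^n = 0$ with $J = J(R)$, $R$ is QF, and, by the Nakayama--Warfield theorem for serial rings, every right $R$-module decomposes as $M = \bigoplus_i R/J^{k_i}$ with $k_i \in \{1,\ldots,n\}$. By Proposition \ref{artinian} and Proposition \ref{perfect2}, both $i\mathcal{P}(R)$ and $p\mathcal{P}(R)$ consist precisely of the Wisbauer classes $\Mod\text{-}R/J^k$, so the claim reduces to showing that, for every $M$, $\idom^{-1}(M)$ and $\pdom^{-1}(M)$ are the same class $\Mod\text{-}R/J^k$.

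The key local computation is $\idom^{-1}(R/J^k) = \pdom^{-1}(R/J^k) = \Mod\text{-}R/J^k$. The projective equality follows directly from the lemma preceding Proposition \ref{perfect2}. For the injective equality, $\Mod\text{-}R/J^k \subseteq \idom^{-1}(R/J^k)$ holds because $R/J^k$ is itself an artinian chain ring and hence QF, so it is self-injective over $R/J^k$ and therefore $N$-injective for every $N$ annihilated by $J^k$. The reverse inclusion boils down to checking that $R/J^k$ is not $R/J^{k+1}$-injective when $k < n$: since $J^k/J^{k+1} \cong R/J$ is simple and $\Soc(R/J^k) = J^{k-1}/J^k \neq 0$, there is a nonzero homomorphism $f: J^k/J^{k+1} \to R/J^k$, but any $\phi : R/J^{k+1} \to R/J^k$ is determined by $\phi(1+J^{k+1}) = a + J^k$ and sends $j^k + J^{k+1}$ to $aj^k + J^k = 0$ (since $aj^k \in J^k$), so $\phi$ cannot restrict to $f$.

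For a general decomposition $M = \bigoplus_i R/J^{k_i}$, I will use that $\pdom^{-1}(\bigoplus_i A_i) = \bigcap_i \pdom^{-1}(A_i)$ holds for arbitrary families, while $\idom^{-1}(\bigoplus_i A_i) = \bigcap_i \idom^{-1}(A_i)$ holds over the right Noetherian ring $R$ via a Zorn's lemma argument: given $L \leq N$ and a partial map into $\bigoplus_i A_i$, one maximally extends one generator $n \in N$ at a time; the induced map on the right ideal $I = \{r : nr \in L\}$ has finitely generated image because $nR$ is Noetherian, so it lands in some finite sub-sum $\bigoplus_{i \in F} A_i$, which is $nR$-injective as a finite direct sum of $N$-injective summands and thus provides the needed extension. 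Consequently, both $\idom^{-1}(M)$ and $\pdom^{-1}(M)$ equal $\bigcap_i \Mod\text{-}R/J^{k_i} = \Mod\text{-}R/J^{\min_i k_i}$, so $R$ is right super QF. The left super QF conclusion then follows from the left--right symmetry of the chain ring hypothesis.

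The main technical obstacle is the direct-sum identity for $\idom^{-1}$, which is where the Noetherian hypothesis is used crucially; everything else is bookkeeping around the already-established descriptions of the profiles and the concrete ideal structure of $R$.
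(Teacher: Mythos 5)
Your proof is correct and follows essentially the same route as the paper's: decompose every module into cyclic uniserial summands $R/J^{k}$ via Nakayama's theorem and check that $\idom^{-1}(R/J^{k}) = \pdom^{-1}(R/J^{k}) = \Mod\text{-}R/J^{k}$, using that $R/J^{k}$ is QF for one inclusion and an explicit failure of relative injectivity (resp.\ projectivity) with respect to $R/J^{k+1}$ for the other. The only substantive difference is that you spell out the reduction to cyclic modules --- the direct-sum identity for relative injectivity via the Noetherian/Zorn argument --- which the paper's proof leaves implicit.
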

\begin{proof}
We show that $R$ is right super QF. Since $R$ is artinian and every right module is the direct sum of cyclic modules, it suffices to show that $\idom^{-1}(R/I) = \pdom^{-1}(R/I)$ for all ideals $I$. Since $R$ is artinian chain, the ideals of $R$ are all of the form $J^n$, $n \leq \ell$, where $\ell$ denotes the Loewy length of $R$. The cyclic modules in $\idom^{-1}(R/J^n)$ are $R/J^m$, with $m \leq n$, and these are precisely the cyclic modules in $\pdom^{-1}(R/J^n)$: note that $R/J^n$ is QF as a ring, so the cyclic modules $R/J^m$ ($m \leq n$) are in $\idom^{-1}(R/J^n)$ and $\pdom^{-1}(R/J^n)$. If $k < m$, then there exists a non-split epimorphism $R/J^m \rightarrow R/J^k$ and a non-split monomorphism $R/J^k \rightarrow R/J^m$, this shows that $R/J^k$ is not in $\pdom^{-1}(R/J^n)$ or $\idom^{-1}(R/J^n)$. Then, $R$ is right super QF.
\end{proof}

\begin{proposition}\label{morita}
Let $R$ be a right super QF ring and let $S$ be Morita equivalent to $R$. Then, $S$ is right super QF.
\end{proposition}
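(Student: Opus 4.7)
The plan is to exploit the fact that Morita equivalence preserves relative injectivity and relative projectivity, a fact that was already tacitly used in the earlier propositions asserting $i\mathcal{P}(R)\cong i\mathcal{P}(S)$ and $p\mathcal{P}(R)\cong p\mathcal{P}(S)$ for Morita equivalent rings. Concretely, let $F:\Mod\text{-}R\to\Mod\text{-}S$ be a category equivalence with quasi-inverse $G:\Mod\text{-}S\to\Mod\text{-}R$. Since $F$ and $G$ preserve monomorphisms, epimorphisms, and the lifting/extension diagrams involved in the definitions of $N$-injectivity and $N$-projectivity, for any modules $A,B\in\Mod\text{-}R$ one has that $A$ is $B$-injective (resp.\ $B$-projective) if and only if $F(A)$ is $F(B)$-injective (resp.\ $F(B)$-projective), and analogously for $G$.

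Now I would fix an arbitrary right $S$-module $M'$ and set $M:=G(M')\in\Mod\text{-}R$, so that $F(M)\cong M'$. First I would verify that the assignment $N'\mapsto G(N')$ yields a bijection between $\idom^{-1}(M')$ and $\idom^{-1}(M)$: indeed, $M'$ is $N'$-injective if and only if $G(M')=M$ is $G(N')$-injective, so $N'\in\idom^{-1}(M')$ if and only if $G(N')\in\idom^{-1}(M)$; the inverse of the bijection is given by $N\mapsto F(N)$ using that $FG\cong\mathrm{id}_{\Mod\text{-}S}$. The exact same argument, applied to projectivity, shows that $N'\in\pdom^{-1}(M')$ if and only if $G(N')\in\pdom^{-1}(M)$.

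Finally, I would invoke the hypothesis that $R$ is right super QF, which gives $\idom^{-1}(M)=\pdom^{-1}(M)$. Combined with the two biconditionals above, this yields $\idom^{-1}(M')=\pdom^{-1}(M')$ for the arbitrary $S$-module $M'$, so $S$ is right super QF.

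There is essentially no obstacle here: the entire content reduces to the standard (and already tacitly used) fact that relative injectivity and relative projectivity are Morita invariant notions, applied pointwise to every module. The only thing worth flagging carefully in the write-up is the direction $N'\leftrightarrow G(N')$ of the bijection, since the equality of injectivity and projectivity domains is an equality of \emph{classes of modules}, and one must make sure that ranging over all $N'\in\Mod\text{-}S$ corresponds, via $G$, to ranging over all (isomorphism classes of) objects of $\Mod\text{-}R$, which is immediate from the fact that $G$ is an equivalence.
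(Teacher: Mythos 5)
Your proposal is correct and follows essentially the same route as the paper: both arguments reduce the claim to the fact that a Morita equivalence preserves relative injectivity and relative projectivity, and then transport the equality $\idom^{-1}(M)=\pdom^{-1}(M)$ across the equivalence module by module. Your write-up merely spells out, via the quasi-inverse, the bijection between injectivity (and projectivity) domains that the paper states in one line.
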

\begin{proof}
Follows from the fact that if $\Phi : \Mod\text{-}R \rightarrow \Mod\text{-}S$ is an equivalence of categories, then $A \in \idom^{-1}(M)$ ($A \in \pdom^{-1}(M)$) if and only if $\Phi(A) \in \idom^{-1}(\Phi(M))$ ($\Phi(A) \in \pdom^{-1}(\Phi(M))$, respectively).
\end{proof}

Recall that a ring $R$ is said to be right FGF if every finitely generated right $R$-module embeds in a free module. It is clear that every QF ring is right and left FGF. Whether every right FGF ring is necessarily QF is an open problem (see \cite{faith} for more references). It is interesting that the condition \lq all factor rings of $R$ are QF\rq \; is equivalent to \lq all factor rings of $R$ are FGF\rq \; (\cite[Theorem 6.1]{faith}). The next theorem tells us that these rings are precisely the (right or left) super QF rings.

\begin{theorem}\label{theoremsuperqf}
Let $R$ be a ring. Then, the following conditions are equivalent: \\

1) $R$ is right super QF. \\

1') $R$ is left super QF. \\

2) Every factor ring of $R$ is right super QF. \\

2') Every factor ring of $R$ is left super QF. \\

3) Every factor ring of $R$ is QF. \\

4) Every factor ring of $R$ is FGF. \\

5) $R$ is isomorphic to a product of full matrix rings over artinian chain rings.

\end{theorem}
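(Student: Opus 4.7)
The plan is to prove the seven-way equivalence via the cycle
\[
(1) \Rightarrow (2) \Rightarrow (3) \Rightarrow (5) \Rightarrow (1)
\]
together with the symmetric cycle through $(1')$ and $(2')$, using $(3) \Leftrightarrow (4)$ from \cite[Theorem 6.1]{faith}.

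The easy steps assemble directly from earlier results in this section. Proposition \ref{quotient} gives $(1) \Rightarrow (2)$ and, by the same argument on the left, $(1') \Rightarrow (2')$; the reverse implications are trivial since $R$ is a factor ring of itself. Because every right (or left) super QF ring is QF, $(2) \Rightarrow (3)$ and $(2') \Rightarrow (3)$ are immediate. For $(5) \Rightarrow (1)$ and $(5) \Rightarrow (1')$, combine Proposition \ref{chain} (artinian chain rings are both right and left super QF), Proposition \ref{morita} (the super QF property is Morita invariant, so it passes to $M_n(T)$ for every artinian chain ring $T$), and Proposition \ref{product} (finite direct products preserve super QF). Since (5) is a left-right symmetric condition, the two cycles close simultaneously.

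The substantive content is the structural implication $(3) \Rightarrow (5)$. Under (3), $R$ itself is QF and hence two-sided artinian. Writing $R = \prod_i R_i$ as a product of indecomposable blocks, property (3) descends to each $R_i$ since ideals of $R_i$ extend to ideals of $R$. Each indecomposable artinian $R_i$ is isomorphic to $M_{n_i}(B_i)$ for a basic artinian ring $B_i$; as Morita equivalence induces a bijection of two-sided ideal lattices and preserves QF, each $B_i$ also satisfies (3). The remaining task is therefore to show that a basic indecomposable artinian ring $B$, all of whose factor rings are QF, is an artinian chain ring.

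I would handle this last reduction in two stages. First, $B$ is local: otherwise $B$ has at least two non-isomorphic simple modules, and one can quotient by the two-sided ideal generated by a primitive idempotent corresponding to one vertex of the Gabriel quiver; the resulting factor is a basic QF ring whose indecomposable projectives have incompatible lengths with respect to any Nakayama permutation, because removing a vertex asymmetrically eliminates either top or socle contributions from arrows at that vertex. Second, once $B$ is local, hypothesis (3) applied to $B/J^n(B)$ shows that this local QF factor has simple socle $J^{n-1}(B)/J^n(B)$ for every $n$, so every radical layer of $B$ is simple, $B$ is right uniserial, and $B$ is an artinian chain ring. The main obstacle will be the locality step, since it requires turning the abstract "every factor is QF" hypothesis into a concrete contradiction with the Nakayama-permutation constraint on a carefully chosen factor; I expect the cleanest execution to rely on classical structure results for basic QF rings together with the preservation of (3) under factors and Morita equivalence established above.
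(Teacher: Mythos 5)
Your reduction of the theorem to the cycle $(1)\Rightarrow(2)\Rightarrow(3)\Rightarrow(5)\Rightarrow(1)$, together with the symmetric cycle and $(3)\Leftrightarrow(4)$ from \cite[Theorem 6.1]{faith}, matches the paper's strategy, and all of the easy arrows are handled exactly as in the paper: Proposition \ref{quotient} for $(1)\Rightarrow(2)$, the observation that super QF implies QF for $(2)\Rightarrow(3)$, and Propositions \ref{chain}, \ref{morita} and \ref{product} for $(5)\Rightarrow(1)$. The difference is that the paper obtains the whole chain $3)\Leftrightarrow 4)\Leftrightarrow 5)$ from \cite[Theorem 6.1]{faith}, whereas you cite that theorem only for $(3)\Leftrightarrow(4)$ and attempt to prove the structure implication $(3)\Rightarrow(5)$ yourself. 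That is where the proposal has a genuine gap.

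There are two problems with your $(3)\Rightarrow(5)$. First, the intermediate claim that every indecomposable artinian ring is isomorphic to $M_{n}(B)$ for a basic artinian ring $B$ is false in general: the distinct indecomposable projectives can occur in the regular module with different multiplicities (e.g.\ $\End_\Lambda(P_1\oplus P_1\oplus P_2)$ for a connected basic $\Lambda$ with two simples), so this representation is only available after you know the block has a unique simple, and cannot be used before the locality step. Second, and more seriously, the locality step itself is left unproved, and the construction you sketch does not produce the promised contradiction. Take $B=kQ/(\text{paths of length }2)$ where $Q$ is the cyclic quiver with two vertices and arrows $1\to 2\to 1$: this is a basic, connected, self-injective (hence QF) algebra with two non-isomorphic simples, yet the factor $B/Be_1B$ is just the field $k$, which is QF --- no incompatibility of lengths or of Nakayama permutations arises. (This $B$ does have a non-QF factor, namely $B/(\alpha)$, which is the hereditary triangular matrix algebra; but that factor is not of the form $B/BeB$, so a different choice of ideal is essential.) Since you yourself flag the locality step as the main obstacle and do not carry it out, the implication $(3)\Rightarrow(5)$ is not established. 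The economical repair is to do what the paper does and quote \cite[Theorem 6.1]{faith} for the full equivalence $3)\Leftrightarrow 4)\Leftrightarrow 5)$; your uniserial argument for the local case (simple socle of each $B/J^n(B)$ forcing simple radical layers) is correct and could serve as the second half of a self-contained proof, but only once locality of each basic block is genuinely established.
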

\begin{proof}
1) $\Rightarrow$ 2) is Proposition \ref{quotient}, 2) $\Rightarrow$ 3) is clear, and 3) $\Leftrightarrow$ 4) $\Leftrightarrow$ 5) is by \cite[Theorem 6.1]{faith}. For 5) $\Rightarrow$ 1), note that if $R \cong \prod_{i = 1}^{k}\mathbb{M}_{n_i}(D_i)$, where the $D_i$'s are artinian chain rings, then every $\mathbb{M}_{n_i}(D_i)$ is right super QF, as it is Morita equivalent to $D_i$ and this is right super QF by Proposition \ref{chain}. By Proposition \ref{product}, $R$ is right super QF. Then, 1), 2), 3) and 4) are equivalent. By left-right-simmetry, 1'), 2'), 3), 4) and 5) are also equivalent.
\end{proof} \\

Note that if a QF-ring has no (injective or projective) middle class then, by Corollary \ref{isomorphiclat}, it is super QF. 

\begin{proposition}
Let $R$ be a QF ring. Then, the following are equivalent:

\begin{enumerate}
\item $R/\Soc(R)$ is simple artinian.
\item $J(R)$ contains no non-trivial two-sided ideals.
\item $J(R)$ is semisimple homogeneous.
\item $R \cong S \times T$, with $S \cong \mathbb{M}_n(D)$ for an artinian chain ring of length $2$ $D$, and $T$ is $0$ or semisimple artinian.
\end{enumerate}
\end{proposition}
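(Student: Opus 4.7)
The plan is to establish the four equivalences in the cycle (1) $\Leftrightarrow$ (2) $\Leftrightarrow$ (3) $\Leftrightarrow$ (4). Throughout I tacitly assume $R$ is not semisimple artinian, since each of (1), (3), and (4) excludes that case: (1) demands $\Soc(R) \subsetneq R$, (3) combined with the other conditions will correspond to a genuinely radical contribution, and (4) requires $S \neq 0$. Hence I may assume $J(R) \neq 0$.

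For (1) $\Leftrightarrow$ (2), these are just two already-established descriptions of the same lattice having exactly two elements. By Example \ref{qfrings} (which uses the fact that a QF ring is main injective over itself), $i\mathcal{P}(R)$ is isomorphic to the lattice of ideals of $R/\Soc(R)$; so (1) is the condition that this lattice is a $2$-element chain. On the other hand, Proposition \ref{artinian} gives an anti-isomorphism between $i\mathcal{P}(R)$ and the lattice of two-sided ideals contained in $J(R)$; so (2) asserts the same thing about that lattice.

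For (2) $\Rightarrow$ (3), observe that $J(R)^2$ is a two-sided ideal contained in $J(R)$, and Nakayama forbids $J(R)^2 = J(R)$ in an artinian ring, so by (2) we get $J(R)^2 = 0$. Then $J(R)$ is a right $R/J(R)$-module and thus semisimple as a right $R$-module. Its isotypic components are fully invariant right submodules of $J(R)$, and because left multiplication by $r \in R$ is a right $R$-endomorphism of the two-sided ideal $J(R)$, these isotypic components are actually two-sided ideals of $R$; (2) therefore forces a single isotypic component, i.e., $J(R)$ is homogeneous semisimple. For (3) $\Rightarrow$ (2), any two-sided ideal contained in $J(R)$ is a fully invariant right submodule of $J(R)$, and a homogeneous semisimple module has no fully invariant submodules other than $0$ and itself.

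For (4) $\Rightarrow$ (3), compute directly: $J(R) = \mathbb{M}_n(J(D)) \times 0$, and since $J(D)$ is the unique nonzero proper right ideal of the length-$2$ chain ring $D$, it is simple as a right $D$-module; the Morita equivalence between $D$ and $\mathbb{M}_n(D)$ then makes $\mathbb{M}_n(J(D))$ homogeneous semisimple as a right $\mathbb{M}_n(D)$-module, hence as a right $R$-module. For the opposite direction (3) $\Rightarrow$ (4), first note that if $R = R_1 \times R_2$ with both $R_i$ non-semisimple then $J(R_1) \times 0$ is a nontrivial two-sided ideal inside $J(R)$, violating (2); so $R$ has at most one non-semisimple block and we may write $R = S \times T$ with $S$ an indecomposable non-semisimple QF ring and $T$ semisimple artinian (possibly zero). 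In $S$, let $\sigma$ be the unique simple type appearing in the homogeneous $J(S)$. Each non-simple indecomposable projective summand $eS$ of $S_S$ has $\Soc(eS) \cong \sigma$, and via the Nakayama permutation of the QF ring $S$ the top of every indecomposable projective summand is also $\cong \sigma$; thus $\sigma$ is the only simple right $S$-module. So $S$ is Morita equivalent to its basic ring $D := eSe$, which is a local QF ring, and because $eS$ has composition length $2$ one checks that $J(D)$ is simple as a right (and left) $D$-module, making $D$ an artinian chain ring of length $2$ and yielding $S \cong \mathbb{M}_n(D)$. The main obstacle is this last direction (3) $\Rightarrow$ (4): separating $R$ cleanly into a single non-semisimple block plus a semisimple piece is easy from (2), but the identification of the resulting basic ring $D$ as a length-$2$ chain ring requires combining the QF property, the top/socle matching via Nakayama permutation, and Morita reduction.
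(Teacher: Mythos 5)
Your cycle of equivalences is sound in outline, and (1) $\Leftrightarrow$ (2) is exactly the paper's argument (Example \ref{qfrings} plus Proposition \ref{artinian}). But there is a real gap in your (3) $\Rightarrow$ (2): the assertion that \emph{any two-sided ideal contained in $J(R)$ is a fully invariant submodule of $J(R)_R$} is false for general rings. Take $R = k[x,y]/(x,y)^2$: here $J(R) \cong (R/J(R))^2$ is homogeneous semisimple, the ideal $(x)$ is two-sided with $0 \neq (x) \subsetneq J(R)$, yet $\End_R(J(R)) \cong \mathbb{M}_2(k)$ acts transitively on lines, so $(x)$ is not fully invariant --- indeed (3) holds and (2) fails for this (non-QF) ring. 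The step is rescued precisely by the hypothesis you never invoke there: $R$ is QF, hence right self-injective, so every endomorphism of $J(R)_R$ extends to an endomorphism of $R_R$, i.e.\ is the restriction of a left multiplication, and left multiplications do preserve two-sided ideals. This is exactly the point the paper's own proof makes when it says ``$R$ is self-injective, so the trace of any simple module in $R$ is a minimal ideal of $R$.'' With that one sentence added your (2) $\Leftrightarrow$ (3) is correct; your (2) $\Rightarrow$ (3) via Nakayama ($J(R)^2 = 0$, isotypic components are two-sided) is a mild variant of the paper's, which instead gets $J(R) \subseteq \Soc(R)$ from $J(R) \cap \Soc(R) \neq 0$.

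For the equivalence with (4) you take a genuinely different route. The paper proves (2) $\Rightarrow$ (4) by noting that (2) makes $R$ a QF ring with no $i$- or $p$-middle class, hence super QF by Corollary \ref{isomorphiclat}, and then invokes Theorem \ref{theoremsuperqf} (ultimately the Faith--Huynh structure theorem) to get the product of matrix rings over artinian chain rings; (4) $\Rightarrow$ (2) is read off the profile of $D$. Your direct argument via the Nakayama permutation avoids that machinery and is more self-contained, which is a genuine gain, but it needs two standard QF facts made explicit: (i) the indecomposable non-semisimple block $S$ has no simple projective summands (a simple projective module over a QF ring is injective, so its trace splits off a semisimple ring direct factor, contradicting indecomposability of $S$) --- without this, ``every indecomposable projective has top $\sigma$'' does not follow from ``every non-simple one has socle $\sigma$''; and (ii) $\Soc(eS) \subseteq eJ(S) \subseteq J(S)$ uses that $eS$ is uniform with simple socle. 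Once those are stated, your identification of the basic ring as a local length-$2$ chain ring goes through.
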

\begin{proof}
(1) $\Leftrightarrow$ (2) is from Corollary \ref{art} and Example \ref{qfrings}.  (2) $\Rightarrow$ (4) is from the fact that if (2) is satisfied, then $R$ is super QF, as it is QF and has no i-middle class and no p-middle class. (4) $\Rightarrow$ (2) is true because, if (4) holds, then the injective profile of $R$ is isomorphic to the injective profile of $D$, which has no i-middle class. (2) $\Rightarrow$ (3) follows because $J(R)\cap\Soc(R) \not= 0$, so $J(R) \subseteq \Soc(R)$ and, since it contains no non-trivial two-sided ideals, it must be homogeneous. Finally, (3) $\Rightarrow$ (2) follows from the fact that $R$ is self-injective, so the trace of any simple module in $R$ is a minimal ideal of $R$.
\end{proof}


\begin{thebibliography}{99}
\bibitem{alahmadi} A.N. Alahmadi, M. Alkan, S.R. L\'opez-Permouth. \lq\lq Poor modules: The opposite of injectivity\rq\rq. Glasg. Math. J. 52A(2010) 7-17
\bibitem{anderson} F.W. Anderson and K.R. Fuller. {\it Rings and categories of modules. $2^{\text{nd}}$ edition.} Springer-Verlag, New York, 1991.
\bibitem{boyle} A.K. Boyle. \lq\lq Hereditary QI-rings\rq\rq. Trans. Amer. Math. Soc. 192 (1974) 115-120.
\bibitem{dung} N.V. Dung, D.V. Huynh, P.F. Smith, R. Wisbauer. {\it Extending modules.} Pitman Res. Notes in Math. Ser., vol. 313, 1994.
\bibitem{cozzens} J.H. Cozzens. \lq\lq Homological properties of the ring of differential polynomials\rq\rq. Bull. Amer. Math. Soc. 76 (1) (1970) 75-79
\bibitem{er}N. Er, S.R. L\'opez-Permouth, N. S\"okmez. \lq \lq Rings whose modules have maximal or minimal injectivity domain\rq\rq. J. Algebra 330 (1) (2011) 404-417
\bibitem{facchini} A. Facchini. {\it Module theory: endomorphism rings and direct sum decompositions in some classes of modules}. Birkh\"auser Verlag, 1998.
\bibitem{faith} C. Faith, D.V. Huynh. \lq\lq When self-injective rings are QF: A report on a problem\rq\rq J. Algebra Appl. 1 (1) (2002) 75-105
\bibitem{golan} J. Golan {\it Linear topologies on a ring: an overview}. Pitman Research Notes in Mathematics Series 159. 1987.
\bibitem{holston} C. Holston, S.R. L\'opez-Permouth, N. Orhan Erta\c s, \lq\lq Rings whose modules have maximal or minimal projectivity domain\rq\rq J. Pure Appl. Algebra 216 (3) (2012) 673-678
\bibitem{mohamed} S.H. Mohamed, B.J. M\"uller. {\it Continuous and discrete modules}. Cambridge University Press, 1990.
\bibitem{raggi1} F. Raggi, J. R\'{i}os, H. Rinc\'on, R. Fern\'andez-Alonso. \lq\lq Basic preradicals and main injective modules\rq\rq. J. Algebra Appl. 8 (1) (2009) 1-16
\bibitem{raggi2} F. Raggi, J. R\'{i}os, H. Rinc\'on, R. Fern\'andez-Alonso. \lq\lq Main injective rings\rq\rq, Comm. Alg. 39:4 (2011) 1226-1233
\bibitem{shelah} S. Shelah. \lq\lq Infinite abelian groups, Whitehead problem and some constructions\lq\lq, Israel J. Math. 28 (3) (1974).
\bibitem{stenstrom} B. Stenstr\"om. {\it Rings of quotients}. Springer-Verlag. Berlin, 1975.
\bibitem{violapriori1} A.M. de Viola-Priori, J. E. Viola-Priori, R. Wisbauer. \lq\lq Module categories with linearly ordered closed subcategories\rq\rq, Comm. Alg. 22:9 (1994) 3613-3627
\bibitem{wisbauer} R. Wisbauer. {\it Grundlagen der Modul- und Ringtheorie.} Verlag Reinhard Fischer, Munich, 1988.
\end{thebibliography}
\end{document}